\xpatchcmd{\@thm}{\fontseries\mddefault\upshape}{}{}{} 
\providecommand{\U}[1]{\protect\rule{.1in}{.1in}}
\newtheorem{theorem}{Theorem}
\newtheorem{claim}[theorem]{Claim}
\newtheorem{definition}[theorem]{Definition}
\newtheorem{lemma}[theorem]{Lemma}
\newtheorem{notation}[theorem]{Notation}
\newtheorem{proposition}[theorem]{Proposition}
\newtheorem{remark}[theorem]{Remark}
\begin{document}
	 \author{Swarnendu Sil\thanks{swarnendu.sil@fim.math.ethz.ch}}
	 \affil{Forschungsinstitut für Mathematik \\ ETH Z\"{u}rich}
	\title{Topology of weak $G$-bundles via Coulomb gauges in critical dimensions}
	\maketitle 

\begin{abstract}
The transition maps for a Sobolev $G$-bundle are not continuous in the critical dimension and thus the usual notion of topology does not make sense. In this work, we show that if such a bundle $P$ is equipped with a Sobolev connection $A$, then one can associate a topological isomorphism class to the pair $\left( P, A\right),$ which is invariant under Sobolev gauge changes and coincides with the usual notions for regular bundles and connections. This is based on a regularity result which says any bundle in the critical dimension in which a Sobolev connection is in Coulomb gauges are actually $C^{0,\alpha}$ for any $\alpha < 1.$ We also show any such pair can be strongly approximated by smooth connections on smooth bundles. Finally, we prove that for  sequences $(P^{\nu},A^{\nu})$ with uniformly bounded $n/2$-Yang-Mills energy, the topology stabilizes if the $n/2$ norm of the curvatures are equiintegrable. This implies a criterion to detect topological flatness in Sobolev bundles in critical dimensions via $n/2$-Yang-Mills energy.  \end{abstract}
\textbf{Keywords:} Sobolev bundles, Topology, Approximation, Yang-Mills. \\
\textbf{MSC codes:} 58E15, 53C07. 
\section{Introduction}
Throughout this article, we shall assume that $n \geq 3, N \geq 1$ are integers and 
\begin{itemize}
	\item $k=1$ or $2$ and $2 < p < \infty$ is a real number, 
	\item $G$ is a compact finite dimensional Lie group,
	 \item $M^{n}$ is a connected, closed $n$-dimensional smooth Riemannian manifold. 
\end{itemize} 
Here closed means a compact manifold without boundary. We are concerned with principal $G$-bundles over $M^{n}.$ The analysis for Yang-Mills functional and in general, problems related to higher dimensional gauge theory, often requires one to work with notions of Sobolev principal bundles and Sobolev connections on them, where the connection forms and the transition maps, which define \u{C}ech cocycles, are only $W^{k,p}.$ But since the transition maps need not be continuous if $kp \leq n,$ the notion of topological isomorphism classes of bundles no longer make sense. 

\par One of our goal in this article is to show that in the critical dimension $kp =n,$ one can however associate a unique topological isomorphism class to a pair $\left( P, A \right)$, where $A$ is a connection on $P$ such that $A \in L^{n}$ and $dA \in L^{\frac{n}{2}}.$ Our notion of a topological isomorphism class is assigned to the pair $(P, A)$ and \emph{not} to $P$ alone. This explicit dependence on the connection $A$ is a new point of view in which  we are encoding topological information about the bundle in the connection as well, so that analysis at the level of connections can still keep track of topological information about the underlying bundles. We fully expect this new point of view to be more useful than the usual topological notions in critical and supercritical regime, since in this regime, the connections are not constrained to respect the topology of the bundles and can `drag' the bundles along with them. The case of supercritical dimensions however requires other tools, which will be treated in a forthcoming work \cite{Sil_MorreySobolevInPrep}. 

\par The topological isomorphism class is nothing but the $C^{0}$-equivalence class of the corresponding  Coulomb bundle, i.e. the bundle obtained from $P$ by a $W^{k,p}$ gauge change in which the connection $A$ satisfies the Coulomb condition $d^{\ast}A = 0.$ As we shall show, given the pair $\left(P, A \right),$ any corresponding Coulomb bundle has the same  $C^{0}$-equivalence class. The fact that such bundles are $C^{0}$-bundles has been proved by Rivi\`{e}re \cite{Riviere_QuantizationYangMills}. We shall show a stronger result, that these bundles are actually H\"{o}lder continuous with any H\"{o}lder exponent $\alpha < 1.$

\par This assignment of $C^{0}$-equivalence class to a pair $(P, A)$ is stable under $W^{k,p}$ gauge changes for $kp=n$ and if the connection and the bundle are more regular, this notion coincides with the usual notion of topological isomorphism class for bundles and thus would be independent of the connection. The H\"{o}lder continuity of the Coulomb bundles is already noticed by Shevchishin in \cite{Shevchishin_limitholonomyYangMills}, although it does not seem to be widely known. Much like our approach, Shevchishin is also using this improved regularity to implicitly define a notion of topology for bundles in the critical dimension. However, instead of assigning a topology to the \emph{pair} $(P, A),$ he is assigning the topology to the bundle $P$ \emph{alone}, by implicitly making a specific choice for the connection $A.$ But two different $\mathcal{U}^{k,p}$ connections can give rise to two distinct Coulomb bundles which are not $C^{0}$ isomorphic ( see Remark \ref{specific choice of connection} ) and there is little geometric reason to prefer any one connection over another.  

\par As a by product, we prove that in the critical dimension, any Sobolev cocycle can be approximated arbitrarily closely in the strong Sobolev topology by smooth cocycles, up to passing to a refinement of the cover.  
\begin{theorem}[cocycle smoothing]\label{cocycle smoothing}
	Let $\left\lbrace U_{\alpha}\right\rbrace_{\alpha \in I} $ be a good cover of $M^{n}$ and let $\left\lbrace g_{\alpha\beta} \right\rbrace_{\alpha, \beta \in I}$ be a collection of maps such that $g_{\alpha\beta} \in W^{k,p}\left( U_{\alpha}\cap U_{\beta}; G\right)$, with $kp=n,$ for every $\alpha, \beta \in I$ with $U_{\alpha}\cap U_{\beta} \neq \emptyset,$ $g_{\alpha\alpha}=\mathbf{1}_{G}$ for every $\alpha \in I$ and satisfies the cocycle conditions
	\begin{align}
	g_{\alpha\beta}(x)g_{\beta\gamma}(x)= g_{\alpha\gamma}(x) \qquad \text{ for a.e. } x \in U_{\alpha}\cap U_{\beta}\cap U_{\gamma} 
	\end{align} 
	for every $\alpha, \beta, \gamma \in I$ with $U_{\alpha}\cap U_{\beta}\cap U_{\gamma} \neq \emptyset.$ Then given any $ \varepsilon >0,$ there exists a good refinement $\left\lbrace V_{j}\right\rbrace_{j \in J}$ of $\left\lbrace U_{\alpha}\right\rbrace_{\alpha \in I}$ and \textbf{smooth} maps $g^{\varepsilon}_{ij}\in C^{\infty}\left(V_{i}\cap V_{j}; G\right)  $ for all $i,j \in J$ with $V_{i}\cap V_{j} \neq \emptyset,$ satisfying $g^{\varepsilon}_{jj}=\mathbf{1}_{G}$ for every $j \in J$ and 
	$$g^{\varepsilon}_{ij}(x)g^{\varepsilon}_{jk}(x) = g^{\varepsilon}_{ik}(x) \quad \text{ for a.e. } x \in V_{i}\cap V_{j}\cap V_{k},  $$
		 whenever $V_{i}\cap V_{j}\cap V_{k} \neq \emptyset$ and we have  $$ \left\lVert g^{\varepsilon}_{ij} - g_{\phi(i)\phi(j)}\right\rVert_{W^{k,p}\left( V_{i}\cap V_{j};G\right)} \leq \varepsilon \qquad \text{ whenever } V_{i}\cap V_{j} \neq \emptyset,$$ where $\phi: J \rightarrow I$ is the refinement map. 
\end{theorem}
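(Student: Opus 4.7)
The plan is to reduce the cocycle to a smooth representative via a Sobolev gauge change, and then smooth the gauge. First, view the given $W^{k,p}$ cocycle $\{g_{\alpha\beta}\}$ as defining a Sobolev principal $G$-bundle $P$ over $M^n$, and equip $P$ with some $W^{k,p}$ connection $A$ (for instance, glued from smooth local reference connections on each $U_\alpha$ via a partition of unity). By the topological classification established earlier in the paper, the pair $(P, A)$ admits a well-defined Coulomb-based topological class, represented by a \emph{smooth} principal $G$-bundle $\tilde P$ with a smooth cocycle $\{\tilde g_{ij}\}$ on a sufficiently fine good refinement $\{V_j\}_{j\in J}$ of $\{U_\alpha\}_{\alpha\in I}$ with refinement map $\phi: J \to I$. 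The $W^{k,p}$-equivalence between $P$ and $\tilde P$ then guarantees $W^{k,p}$ local gauges $u_j \in W^{k,p}(V_j; G)$ such that
\[
g_{\phi(i)\phi(j)} = u_i \, \tilde g_{ij} \, u_j^{-1} \quad \text{a.e. on } V_i \cap V_j.
\]

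The next step is to smooth each gauge $u_j$ using density of smooth maps in the critical Sobolev space. Since each $V_j$ can be chosen diffeomorphic to a Euclidean ball and $G$ is a compact Lie group, $C^\infty(V_j; G)$ is dense in $W^{k,p}(V_j; G)$ in the critical regime $kp = n$: for $k=1$ this is Bethuel's critical density theorem, and for $k=2$ it follows from the analogous higher-order critical density results. For a parameter $\eta > 0$ to be fixed, pick $u_j^\varepsilon \in C^\infty(V_j; G)$ with $\|u_j - u_j^\varepsilon\|_{W^{k,p}(V_j)} \leq \eta$ for each $j \in J$ (the cover being finite by compactness of $M^n$).

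Define the candidate smooth cocycle by
\[
g^\varepsilon_{ij} := u_i^\varepsilon \, \tilde g_{ij} \, (u_j^\varepsilon)^{-1} \qquad \text{on } V_i \cap V_j.
\]
Smoothness is immediate from the product of smooth maps, and the cocycle identity $g^\varepsilon_{ij} g^\varepsilon_{jk} = g^\varepsilon_{ik}$ follows from the telescoping cancellation $(u_j^\varepsilon)^{-1} u_j^\varepsilon = \mathbf{1}_G$ together with the cocycle condition for $\{\tilde g_{ij}\}$; the normalization $g^\varepsilon_{jj} = \mathbf{1}_G$ is built in. For closeness, using $L^\infty$-bounds (automatic for $G$-valued Sobolev maps since $G$ is compact) and the smoothness of $\tilde g_{ij}$, a Leibniz-type multiplication estimate yields
\[
\bigl\|g^\varepsilon_{ij} - g_{\phi(i)\phi(j)}\bigr\|_{W^{k,p}(V_i \cap V_j; G)} \leq C\bigl(\|u_i^\varepsilon - u_i\|_{W^{k,p}} + \|u_j^\varepsilon - u_j\|_{W^{k,p}}\bigr) \leq 2C\eta,
\]
with $C$ depending on $\sup_{ij}\|\tilde g_{ij}\|_{C^k(\overline{V_i \cap V_j})}$ and $\sup_j \|u_j\|_{W^{k,p}(V_j)}$. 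Choosing $\eta = \varepsilon/(2C)$ concludes the construction.

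The principal obstacle is the very first step: producing the decomposition $g = u \tilde g u^{-1}$ with $\tilde g$ smooth and $u$ of class $W^{k,p}$ requires the full machinery of Coulomb-gauge Hölder regularity together with the gauge-invariance of the topological class developed earlier in the paper. A secondary technical point is that $W^{k,p}$ with $kp=n$ is not an algebra, but the multiplication estimate $W^{k,p} \cdot (W^{k,p} \cap L^\infty) \hookrightarrow W^{k,p}$ remains continuous in the presence of an $L^\infty$ bound, which is automatically supplied by the compactness of $G$.
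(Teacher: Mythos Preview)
Your argument is correct, and its core input---passing to a smooth cocycle $\tilde g_{ij}$ that is $W^{k,p}$-equivalent to $g$ via Coulomb gauges and the subcritical smoothing theorem---is exactly what the paper does. The difference lies in how closeness is obtained. The paper tracks the $W^{k,p}$-closeness \emph{through} the construction: the Coulomb gauges $\rho_j$ satisfy $\lVert d\rho_j\rVert_{L^n}\lesssim \lVert F_A\rVert_{L^{n/2}}+\lVert A\rVert_{L^n}$, and by choosing the refinement fine enough these are small, so $h_{ij}=\rho_i^{-1}g_{\phi(i)\phi(j)}\rho_j$ is already $\varepsilon/2$-close to $g_{\phi(i)\phi(j)}$; the subcritical smoothing (Theorem~\ref{smoothingC0bundle}) then preserves closeness. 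You instead take any smooth representative $\tilde g$, ignore whatever estimates the construction gave on the gauges $u_j$, and recover closeness a posteriori by approximating each $u_j$ in $W^{k,p}(V_j;G)$ using density of smooth $G$-valued maps in the critical space. Your route is slightly more modular---it separates ``existence of a smooth representative'' from ``closeness''---at the cost of invoking an external strong density result for manifold-valued maps (Schoen--Uhlenbeck type for $k=1$, and its higher-order analogue for $k=2$, which is correct but less classical). The paper's route is more self-contained and yields the approximation of the connection (Theorem~\ref{smoothaapproxcriticalconn}) simultaneously, which is its real goal; the cocycle smoothing is then immediate without your extra density-and-reconjugation step.
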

This answers a question raised by Rivi\`{e}re in \cite{Riviere_Yang_Millsnote_2015arXiv}. We deduce this theorem from the more general Theorem \ref{smoothaapproxcriticalconn}, which says roughly the following:\bigskip 

\emph{For $kp=n,$ given a $W^{k,p}$ principal $G$-bundle over $M^{n}$ equipped with a $\mathcal{U}^{k,p}$ connection, for any $\varepsilon>0$ we can find a smooth principal $G$-bundle over $M^{n}$ equipped with a smooth connection so that the bundle is both $W^{k,p}$ equivalent and $\varepsilon$-close to the original bundle in $W^{k,p}$ norm and the connection is also $\varepsilon$-close to the pullback of the original connection in $\mathcal{U}^{k,p}$ norm.}\bigskip

Similar results are proved in Isobe \cite{Isobe_Sobolevbundlecriticaldimension}. But our proof is different and follows a more connection oriented approach, which also highlights the fact that the topology of smooth bundles defined by the approximating smooth cocycles are not necessarily uniquely determined by the original cocycles alone ( see Remark \ref{possiblydifftopology_smoothapprox} ).  Also, since our analysis is based on Coulomb gauges, which always exist by linear Hodge theory when $G$ is Abelian, we prove in Theorem \ref{Abeliansmoothaapproxsupercritical} that such an approximation is possible even in supercritical dimensions for principal $S^{1}$-bundles. This result is also proved by Isobe in a different article \cite{Isobe_SobolevbundleAbelian}. But our proof is not only different, but also considerably easier in the Abelian case. 

\par The benefits of encoding topological information in the bundle-connection pair bear fruits in the analysis of sequences of bundles with connections under uniformly bounded Yang-Mills energy. In Theorem \ref{equiintegrability theorem}, which is the main result of the article, we show that for a sequence of pairs $\left( P^{\nu}, A^{\nu}\right)$ with uniformly bounded $n/2$-Yang-Mills energy, the associated topological classes, which can a priori be all different, stabilize for large enough $\nu$ if the sequence of $n/2$ norm of the curvatures is equiintegrable. For a sequence of connections on a fixed $W^{k,p}\cap C^{0}$ bundle, this yields the following. 
\begin{theorem}[Stability of topology if curvatures does not concentrate]\label{improved strong convergence}
	Let $kp=n.$ Let $P$ be a $W^{k,p}\cap C^{0}$ bundle over $M^{n}$  and let $\left\lbrace A^{\nu}\right\rbrace_{\nu \geq 1}$ be a sequence of connections on $P$ such that 
	\begin{itemize}
		\item[(1)] $A^{\nu} \in L^{n}$, $dA^{\nu} \in L^{\frac{n}{2}},$ $d^{\ast}A^{\nu} \in L^{\left(\frac{n}{2},1\right)}$ for every $\nu \geq 1, $ 
		\item[(2)] $\left\lVert F_{A^{\nu}}\right\rVert_{L^{\frac{n}{2}}\left(M^{n}; \Lambda^{2}T^{\ast}M^{n}\otimes \mathfrak{g} \right)}$ is uniformly bounded, 
		\item[(3)] the sequence $\left\lbrace \left\lvert F_{A^{\nu}}\right\rvert^{\frac{n}{2}} \right\rbrace_{\nu \geq 1}$ is \emph{equiintegrable} in $M^{n}.$
	\end{itemize}
Then there exists a subsequence $\left\lbrace A^{\nu_{s}}\right\rbrace_{s \geq 1},$ a limiting $W^{k,p}\cap C^{0}$ bundle $P^{\infty} = \left( \left\lbrace U^{\infty}_{i}\right\rbrace_{i \in I}, \left\lbrace g_{ij}^{\infty}\right\rbrace_{i,j \in I} \right) $ with $\left[ P\right]_{C^{0}} = \left[ P^{\infty}\right]_{C^{0}} $ and a limit connection $A^{\infty}$ on $P^{\infty}$  such that for every $ i \in I$,
	\begin{align*}
	F_{A^{\nu_{s}}_{i}} &\rightharpoonup F_{A^{\infty}_{i}} &&\text{ weakly in } L^{\frac{n}{2}}\left(U^{\infty}_{i}; \Lambda^{2}T^{\ast}U^{\infty}_{i}\otimes\mathfrak{g}\right).
	\end{align*} \end{theorem}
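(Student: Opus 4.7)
The plan is to deduce this result directly from the main Theorem \ref{equiintegrability theorem} by specializing to the case where the underlying bundle is fixed: set $P^\nu := P$ for every $\nu$. Hypotheses (1)--(3) on $A^\nu$ match the input conditions of the main theorem verbatim, so I obtain a subsequence, a limit pair $(P^\infty, A^\infty)$ in the $W^{k,p}\cap C^0$ category, and stabilization of the pair-topology classes $[P^{\nu_s}, A^{\nu_s}]_{C^0}$ to $[P^\infty]_{C^0}$.

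The identification $[P^\infty]_{C^0} = [P]_{C^0}$ then follows from the principle stressed in the introduction: when the underlying bundle is already $C^0$, the pair-topology collapses to the classical $C^0$ isomorphism class and is independent of the connection. Applied to each pair $(P, A^\nu)$, every input class equals $[P]_{C^0}$, so the common stabilized value must also be $[P]_{C^0}$.

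For the weak convergence $F_{A^{\nu_s}_i} \rightharpoonup F_{A^\infty_i}$ in $L^{n/2}(U^\infty_i)$, I would read it off from the construction behind Theorem \ref{equiintegrability theorem}. After choosing Coulomb trivializations on the limit cover $\{U^\infty_i\}$, the uniform bound on $\|F_{A^\nu}\|_{L^{n/2}}$ together with the linear Hodge estimate yields uniform control of the local forms $A^{\nu_s}_i$ in $W^{1,n/2}$, upgraded via equiintegrability and the Lorentz assumption $d^*A^\nu \in L^{(n/2,1)}$ to uniform control in $L^{(n,1)}$. Weak $W^{1,n/2}$ convergence together with strong $L^n$ convergence of $A^{\nu_s}_i$ on each chart (from compact Sobolev embeddings combined with the Lorentz improvement) handles both the linear term $dA^{\nu_s}_i$ and the quadratic term $[A^{\nu_s}_i, A^{\nu_s}_i]$ appearing in $F_{A^{\nu_s}_i}$, giving the desired weak $L^{n/2}$ convergence to $F_{A^\infty_i}$.

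The step I expect to be the main obstacle is the interaction between gauge fixing and the $C^0$ identification in the limit. The Coulomb gauge transformations are only in $\mathcal{U}^{k,p}$ a priori, so their subsequential limits must be shown H\"older continuous to produce a bona fide $C^0$-bundle isomorphic to $P$ rather than a limit object whose topology has jumped. This step relies crucially on the H\"older regularity of Coulomb bundles established earlier in this article, for which equiintegrability of $|F_{A^\nu}|^{n/2}$ is precisely the hypothesis preventing curvature concentration at bubble points and the consequent topology change in the limit.
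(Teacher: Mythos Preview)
Your overall scheme---specialize Theorem \ref{equiintegrability theorem} to $P^{\nu}=P$ for all $\nu$---is exactly what the paper does. But there is a genuine gap in your second paragraph. You assert that ``when the underlying bundle is already $C^{0}$, the pair-topology collapses to the classical $C^{0}$ isomorphism class and is independent of the connection.'' This is false in general: the paper explicitly warns, just before Theorem \ref{topology notions coincide} and again in Remark \ref{smoothcasetopologyremark}(i), that even for a smooth bundle $P$ and $A \in \mathcal{U}^{k,p}(P)$ one may have $\left[ P_{A}\right]_{W^{k,p}} \neq \left[ P\right]_{C^{0}}$. What makes the identification $\left[P^{\infty}\right]_{C^{0}}=\left[P\right]_{C^{0}}$ go through here is precisely hypothesis (1), namely $d^{\ast}A^{\nu}\in L^{(n/2,1)}$: by Theorem \ref{topology notions coincide} this forces $\left[P_{A^{\nu}}\right]_{W^{k,p}}=\left[P\right]_{C^{0}}$ for \emph{every} $\nu$, and only then does the stabilization conclusion of Theorem \ref{equiintegrability theorem} give $\left[P^{\infty}\right]_{C^{0}}=\left[P\right]_{C^{0}}$. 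You have the Lorentz hypothesis available but invoke it in the wrong place---in the curvature-convergence argument, where it actually plays no role.

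As a minor point, your third paragraph is superfluous and slightly off: the weak $L^{n/2}$ convergence of curvatures is already part of the conclusion of Theorem \ref{equiintegrability theorem} and requires neither the Lorentz assumption nor strong $L^{n}$ convergence of the Coulomb representatives (the proof there obtains, and only needs, strong $L^{r}$ convergence for $r<n$ from the compact embedding $W^{1,n/2}\hookrightarrow L^{r}$).
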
  
 This improves Theorem IV.2. in Rivi\`{e}re \cite{Riviere_QuantizationYangMills}, which needed $A^{\nu}$ to be strongly convergent in $W^{1,\frac{n}{2}}$ and $d^{\ast}A^{\nu}$ to be strongly convergent in the Lorentz space $L^{(\frac{n}{2}, 1)}.$ Control of the full gradient of the connection, control of  $d^{\ast}A^{\nu}$ and the strong convergences, are all somewhat unnatural and unsatisfactory requirements. In comparison, Theorem \ref{improved strong convergence} does not need either strong convergences and except for the information on the curvatures, not even an uniform bound is needed either for $d^{\ast}A^{\nu}$ or the full gradient, which settles the question raised by Rivi\`{e}re in Remark IV. 2. in \cite{Riviere_QuantizationYangMills}. Theorem \ref{equiintegrability theorem} implies Theorem \ref{flatness in W1n bundles}, which gives a criterion to detect \emph{topological flatness} for $W^{k,p}$ bundles equipped with $\mathcal{U}^{k,p}$ connections via $n/2$ Yang-Mills energy of the connection for $kp=n.$ As a consequence, we deduce the following extension of the energy gap theorem to non-smooth connections, which as far as we are aware, is new and might be of interest in itself.
 \begin{theorem}[$n/2$-Yang-Mills energy gap]\label{energy gap}
	For any cover $\mathcal{U}$ of $M^{n},$ there exists a constant $\delta >0,$ depending only on $\mathcal{U}$, $M^{n}$ and $G$ such that if $ P $ is a $W^{1,n}\cap C^{0}$ bundle trivialized over $\mathcal{U}$ and $A$ is a connection form on $P$ such that  $A \in L^{n}$, $dA \in L^{\frac{n}{2}},$ $d^{\ast}A \in L^{\left(\frac{n}{2},1\right)},$ then we must have $YM_{n/2}\left(A \right)  > \delta , $  unless $P$ is flat.  
\end{theorem}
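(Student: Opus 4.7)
The plan is to argue by contradiction, reducing the claim to Theorem~\ref{equiintegrability theorem}. Suppose, for the given cover $\mathcal{U}$, that no such $\delta > 0$ existed. One could then extract a sequence of pairs $(P^{\nu},A^{\nu})$, where each $P^{\nu}$ is a $W^{1,n}\cap C^{0}$ bundle trivialized over $\mathcal{U}$ and each $A^{\nu}$ satisfies the three listed regularity conditions, with $YM_{n/2}(A^{\nu}) \to 0$ while no $P^{\nu}$ is topologically flat.

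Since $\int_{E} |F_{A^{\nu}}|^{n/2} \leq YM_{n/2}(A^{\nu}) \to 0$ uniformly over measurable $E \subseteq M^{n}$, both the uniform $L^{n/2}$-bound on $F_{A^{\nu}}$ and the equiintegrability of $\{|F_{A^{\nu}}|^{n/2}\}$ are immediate. I would then invoke Theorem~\ref{equiintegrability theorem} to extract a subsequence $(P^{\nu_{s}},A^{\nu_{s}})$, a limiting $W^{1,n}\cap C^{0}$ bundle $P^{\infty}$, and a connection $A^{\infty}$ on $P^{\infty}$ for which $[P^{\nu_{s}}]_{C^{0}} = [P^{\infty}]_{C^{0}}$ for all large $s$ and $F_{A^{\nu_{s}}} \rightharpoonup F_{A^{\infty}}$ weakly in $L^{n/2}$ chart-by-chart. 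Lower semicontinuity of the $L^{n/2}$-norm under weak convergence then forces $\lVert F_{A^{\infty}}\rVert_{L^{n/2}} = 0$, so $A^{\infty}$ is a flat connection on $P^{\infty}$. Applying Theorem~\ref{flatness in W1n bundles} (or directly reading off that a $C^{0}$-bundle carrying a flat Sobolev connection is topologically flat) yields that $P^{\infty}$ is topologically flat. Since $[P^{\nu_{s}}]_{C^{0}} = [P^{\infty}]_{C^{0}}$ for $s$ large, topological flatness is inherited by $P^{\nu_{s}}$, contradicting the construction of the sequence.

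The main technical point I expect is the verification that Theorem~\ref{equiintegrability theorem} is applicable to a sequence of pairs whose underlying bundles all live on the \emph{same fixed} cover $\mathcal{U}$, and that the extracted $P^{\infty}$ together with the stabilized $C^{0}$-class depends qualitatively only on $\mathcal{U}$, $M^{n}$, and $G$; this is what ultimately pins down the advertised dependency of $\delta$. The remaining ingredients—producing a weak $L^{n/2}$-limit with vanishing curvature, and transferring topological flatness across a $C^{0}$-equivalence—are routine once the topological stability of $(P^{\nu_{s}},A^{\nu_{s}})$ has been established, and no strong convergence or gauge control beyond what Theorem~\ref{equiintegrability theorem} already supplies is needed.
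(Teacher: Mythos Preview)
Your approach is essentially the paper's own: the paper derives Theorem~\ref{energy gap} by combining Theorem~\ref{flatness in W1n bundles} (whose proof is exactly the contradiction argument via Theorem~\ref{equiintegrability theorem} that you outline) with Theorem~\ref{topology notions coincide}. You have simply unrolled the proof of Theorem~\ref{flatness in W1n bundles} inline.

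There is, however, one step you pass over too quickly. Theorem~\ref{equiintegrability theorem} does \emph{not} assert that $[P^{\nu_{s}}]_{C^{0}} = [P^{\infty}]_{C^{0}}$; it asserts that the \emph{Coulomb-bundle} classes $[P^{\nu_{s}}_{A^{\nu_{s}}}]_{W^{1,n}}$ stabilize and equal $[P^{\infty}_{A^{\infty}}]_{W^{1,n}}$. As Remark~\ref{smoothcasetopologyremark}(i) emphasizes, even for a $C^{0}$-bundle $P$ one may have $[P]_{C^{0}} \neq [P_{A_{Coulomb}}]_{C^{0}}$ for a generic $\mathcal{U}^{1,n}$ connection $A$. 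The bridge you need is precisely Theorem~\ref{topology notions coincide}: since each $P^{\nu}$ is $W^{1,n}\cap C^{0}$ and $d^{\ast}A^{\nu} \in L^{(n/2,1)}$, that theorem gives $[P^{\nu}_{A^{\nu}}]_{W^{1,n}} = [P^{\nu}]_{C^{0}}$, and only then does the stabilization of Coulomb classes yield $[P^{\nu_{s}}]_{C^{0}} = [P^{\infty}]_{C^{0}}$. This is exactly where the Lorentz-space hypothesis on $d^{\ast}A$ enters, and you should cite it explicitly; without it the argument does not go through (cf.\ Remark~\ref{smoothcasetopologyremark}).
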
 
\noindent When $A$ is a smooth, this is the usual energy gap theorem.
\par The requirement of equiintegrability of the $n/2$-norm of the curvatures in Theorem \ref{improved strong convergence}, which at first sight might seem strange, is actually a natural hypothesis. In practice, if we know that the sequence of curvatures satisfy some elliptic systems, for example, in cases of stationary Yang-Mills or Anti-Self-Dual connections etc, then by the epsilon-regularity type results for elliptic systems in critical dimensions, the curvatures does not concentrate in the so-called \emph{neck regions} and the equiintegrability hypothesis is satisfied on \emph{such regions} and would be satisfied on the whole domain if there are no \emph{bubbles}. On the other hand, it is known ( see Freed-Uhlenbeck \cite{FreedUhlenbeck_instantonandfourmanifold}, Taubes \cite{Taubes_metricconnectiongluing} )  that the topology can change in the weak limit if one assumes only the uniform $L^{\frac{n}{2}}$ bound of the curvatures. 

\paragraph*{} The rest of the article is organized as follows. In Section \ref{prelim}, we collect the preliminary notions and notations that we would use. Section \ref{elliptic estimates density} we are concerned with proving the smooth approximation theorems. Section \ref{topology section} defines the notion of the topological isomorphism class and discusses its properties and proves the result concerning topology stabilization in the limit and its consequences. 
 
\section{Preliminaries}\label{prelim}
\subsection{Smooth principal $G$ bundles with connections}
A smooth principal $G$-bundle ( or simply a $G$-bundle ) $P$ over $M^{n}$ is usually denoted by the notation $P \stackrel{\pi}{\rightarrow} M^{n},$ where $\pi: P \rightarrow M^{n}$ is a smooth map, called the \emph{projection map}, $P$ is called the \emph{total space} of the bundle, $M^{n}$ is the \emph{base space}. One way to define a smooth principal $G$ bundle $P \stackrel{\pi}{\rightarrow} M^{n},$ is to specify an open cover $\mathcal{U} = \left\lbrace U_{\alpha} \right\rbrace_{\alpha \in I} $ of $M^{n},$ i.e. $M^{n}= \bigcup\limits_{\alpha \in I} U_{\alpha}$ and a collection of bundle trivialization maps $\left\lbrace \phi_{\alpha}\right\rbrace_{\alpha \in I}$ such that $\phi_{\alpha}: U_{\alpha}\times G \rightarrow \pi^{-1}\left( U_{\alpha} \right) $ is a smooth diffeomorphism for every $\alpha \in I$ and each of them preserves the fiber, i.e. $\pi \left(\phi_{\alpha}\left(x, g\right)\right) = x$ for every $g \in G$ for every $x \in U_{\alpha}$ and they are $G$-equivariant, i.e. whenever $U_{\alpha}\cap U_{\beta}$ is nonempty, there exist smooth maps, called transition function $g_{\alpha\beta}: U_{\alpha}\cap U_{\beta} \rightarrow G$ such that for every $x \in U_{\alpha}\cap U_{\beta},$ we have 
\begin{align}\label{Gequivariance}
\left( \phi^{-1}_{\alpha} \circ \phi_{\beta}\right) (x,h) = (x,g_{\alpha\beta}(x)h ) \text{ for every } h \in G.
\end{align}  
  From \eqref{Gequivariance}, it is clear that $g_{\alpha\alpha}=\mathbf{1}_{G},$ the identity element of $G,$ for all $\alpha \in I$ and if $U_{\alpha}\cap U_{\beta}\cap U_{\gamma} \neq \emptyset,$ the transition functions satisfy the cocycle identity 
\begin{align}\label{cocycle condition def}
g_{\alpha\beta}(x)g_{\beta\gamma}(x) = g_{\alpha\gamma}(x) \qquad \text{ for every } x \in U_{\alpha}\cap U_{\beta}\cap U_{\gamma}.
\end{align}
\paragraph*{\textbf{Bundles as transition function data}} We shall be using an equivalent way ( see e.g. \cite{Steenrod_fibrebundles} ) of defining the bundle structure -- by specifying the open cover $\mathcal{U}$ along with the cocycles $\left\lbrace g_{\alpha\beta} \right\rbrace_{\alpha, \beta \in I}.$ $P = \left( \left\lbrace U_{\alpha}\right\rbrace_{\alpha \in I}, \left\lbrace g_{\alpha\beta} \right\rbrace_{\alpha, \beta \in I} \right)$ shall denote a smooth or $C^{0}$ principal $G$ bundle, if $g_{\alpha\beta}$ are smooth or continuous, respectively. We denote the space of smooth and $C^{0}$ principal $G$-bundles over $M^{n}$ by the notation $\mathcal{P}^{0}_{G}\left(M^{n}\right)$ and $\mathcal{P}^{\infty}_{G}\left(M^{n}\right)$ respectively.  
 
\paragraph*{\textbf{Good covers and refinements}} A \emph{refinement} of a cover $\left\lbrace U_{\alpha} \right\rbrace_{\alpha \in I}$ is another cover $\left\lbrace V_{j} \right\rbrace_{j \in J}$ with a \emph{refinement map} $\phi: J \rightarrow I $ such that for every $j \in J,$ we have 
$ V_{j} \subset \subset U_{\phi(j)}.$ If $\left\lbrace U_{\alpha}\right\rbrace_{\alpha \in I}$ and $ \left\lbrace V_{\tilde{\alpha}}\right\rbrace_{\tilde{\alpha} \in \tilde{I}}$ are two covers of the same base space, then a \emph{common refinement} is another cover $\left\lbrace W_{j}\right\rbrace_{j \in J}$ with refinement maps $\phi: J  \rightarrow I$ and $\tilde{\phi}: J \rightarrow \tilde{I}$ such that for every $j \in J,$ we have 
$ W_{j} \subset \subset U_{\phi(j)}\cap V_{\tilde{\phi}(j)}.$ 

\begin{notation}
	We shall always assume the covers ( including refinements and common refinements ) involved are \emph{finite} and \emph{good cover in the \u{C}ech sense}, or simply a \emph{good cover}, i.e. every nonempty finite intersection of the open sets in the elements of the cover are diffeomorphic to the open unit Euclidean ball. In fact, we shall assume that the elements in the cover are small enough convex geodesic balls such that their volume is comparable to Euclidean balls.  
\end{notation}

\paragraph*{\textbf{Connection, gauges and curvature}}
A \emph{connection}, or more precisely, a \emph{connection form} $A$ on $P$ is a collection $\left\lbrace A_{\alpha} \right\rbrace_{\alpha \in I},$ where $A_{\alpha}: U_{\alpha} \rightarrow \Lambda^{1}\mathbb{R}^{n}\otimes \mathfrak{g}$ satisfy the \textbf{gluing relations} 
\begin{align}\label{gluing relation def}
A_{\beta} = g_{\alpha\beta}^{-1} dg_{\alpha\beta} +  g_{\alpha\beta}^{-1} A_{\alpha}g_{\alpha\beta} \qquad \text{ a.e. in } U_{\alpha}\cap U_{\beta}.
\end{align} They define a global $\mathfrak{g}$-valued $1$-form   $A: M^{n} \rightarrow \Lambda^{1}T^{\ast}M^{n}\otimes \mathfrak{g},$ which is smooth if $A_{\alpha}$s are.  We denote the space of smooth connections on a $P$ by the notation $\mathcal{A}^{\infty}\left(P\right).$ A \emph{gauge} $\rho =\left\lbrace \rho_{\alpha} \right\rbrace_{\alpha \in I}$ is a collection of maps $\rho_{\alpha}: U_{\alpha} \rightarrow G$. which represents a change of trivialization for the bundle, given by 
$$\phi_{\alpha}^{\rho_{\alpha}} (x, h) = \phi_{\alpha}(x,\rho_{\alpha}(x)h ) \qquad \text{ for all } x \in U_{\alpha} \text{ and for all } h \in G . $$ Then the new transition functions are given by $ h_{\alpha\beta} = \rho_{\alpha}^{-1}g_{\alpha\beta}\rho_{\beta}$ in $ U_{\alpha}\cap U_{\beta}$ for all $\alpha, \beta \in I.$ The local representatives of the connections form with respect to the new trivialization $\left\lbrace A_{\alpha}^{\rho_{\alpha}}\right\rbrace_{\alpha \in I}$ satisfy the gauge change identity
\begin{align}
\label{gauge change relation}
A_{\alpha}^{\rho_{\alpha}} = \rho_{\alpha}^{-1}d\rho_{\alpha} + \rho_{\alpha}^{-1} A_{\alpha}\rho_{\alpha} \qquad \text{ a.e. in } U_{\alpha}, \quad \text{ for all }\alpha \in I.
\end{align} 
 The \emph{curvature} or the \emph{curvature form} associated to a connection form $A$ is a $\mathfrak{g}$-valued $2$-form on $M^{n},$ denoted $F_{A}:M^{n}\rightarrow \Lambda^{2}T^{\ast}M^{n}\otimes \mathfrak{g}.$ Its local expressions, $\left( F_{A}\right)_{\alpha \in I},$ denoted $F_{A_{\alpha}}$ by a slight abuse of notations, are given by 
\begin{align}
\label{curvature of a conn def}
F_{A_{\alpha}} = dA_{\alpha} + A_{\alpha}\wedge A_{\alpha} = dA_{\alpha} + \frac{1}{2} \left[ A_{\alpha}, A_{\alpha}\right] \qquad \text{ in } U_{\alpha} , \quad \text{ for all }\alpha \in I,
\end{align}
where the wedge product denotes the wedge product of $\mathfrak{g}$-valued forms and the bracket $\left[ \cdot, \cdot \right]$ is the Lie bracket of $\mathfrak{g},$ extended to $\mathfrak{g}$-valued forms the usual way. The gauge change identity \eqref{gauge change relation} implies 
\begin{align}
\label{connection gauge change def}
F_{A_{\alpha}^{\rho_{\alpha}}} = \rho_{\alpha}^{-1} F_{A_{\alpha}}\rho_{\alpha} \qquad \text{ a.e. in } U_{\alpha} , \quad \text{ for all }\alpha \in I.
\end{align}
Similarly, the gluing relation \eqref{gluing relation def} implies that we have 
$F_{A_{\beta}} = g_{\alpha\beta}^{-1} F_{A_{\alpha}}g_{\alpha\beta}$ in $ U_{\alpha}\cap U_{\beta},$ whenever $U_{\alpha}\cap U_{\beta}\neq \emptyset.$ This implies  $\left( F_{A}\right)_{\alpha \in I}$ defines a global $\mathfrak{g}$-valued $2$-form on $M^{n}.$   

\paragraph*{\textbf{Yang-Mills energy}} For any $1 \leq q < \infty,$ the $q$-Yang-Mills energy of a connection $A,$ denoted $YM_{q}\left( A\right),$ is defined as 
$$ YM_{q}\left( A\right): = \int_{M^{n}} \left\lvert F_{A}\right\rvert^{q},$$
where the norm $\left\lvert \cdot \right\rvert$ denotes the norm for \emph{$\mathfrak{g}$-valued differential forms}. \eqref{connection gauge change def} implies ( see e.g. \cite{Tu_connections} ) that the norm $\left\lvert F_{A} \right\rvert$ is gauge invariant and thus the integrand in $YM_{q}$ is gauge invariant for any $1 \leq q < \infty.$  
\subsection{Sobolev bundles and connections}
 \paragraph{Sobolev principal $G$ bundles} Now we define bundles where the transition functions are Sobolev maps, not necessarily smooth or continuous. See Appendix \ref{Gvaluedmaps} for more on $G$-valued Sobolev maps. In analogy with the case of smooth bundles, we define 
\begin{definition}[$W^{k,p}$ principal $G$-bundles]
	We call $P$ a $W^{k,p}$ principal $G$-bundle over $M^{n},$ denoted by $P \in \mathcal{P}^{k,p}_{G}\left(M^{n}\right),$  if $P = \left( \left\lbrace U_{\alpha}\right\rbrace_{\alpha \in I}, \left\lbrace g_{\alpha\beta} \right\rbrace_{\alpha, \beta \in I} \right),$ where $g_{\alpha\beta} \in W^{k,p} \left( U_{\alpha}\cap U_{\beta}; G\right) $ for every $\alpha, \beta \in I$ with $U_{\alpha}\cap U_{\beta} \neq \emptyset, $ and for every $\alpha, \beta, \gamma \in I$ such that $U_{\alpha}\cap U_{\beta}\cap U_{\gamma} \neq \emptyset, $ the transition maps satisfy $g_{\alpha\alpha}= \mathbf{1}_{G}$ for every $\alpha \in I$ and the cocycle conditions 
	\begin{align}
	\label{cocycle sobolev def}
	g_{\alpha\beta}(x)g_{\beta\gamma}(x) = g_{\alpha\gamma}(x) \qquad \text{ for a.e. } x \in U_{\alpha}\cap U_{\beta}\cap U_{\gamma}.
	\end{align} \end{definition}
We also need the notion of Sobolev equivalence of Sobolev bundles, which is just another name for being gauge related by Sobolev gauge changes.
\begin{definition}[$W^{k,p}$ equivalence]
	Two $W^{k,p}$ principal $G$-bundles $P$ and $\tilde{P}$ over the same base space $M^{n}$ are \emph{$W^{k,p}$ equivalent}, denoted by $P \stackrel{W^{k,p}}{\simeq} \tilde{P},$ if there exists a common refinement $\left\lbrace W_{j}\right\rbrace_{j \in J}$ of the covers $\left\lbrace U_{\alpha}\right\rbrace_{\alpha \in I}$ and $\left\lbrace V_{\tilde{\alpha}}\right\rbrace_{\tilde{\alpha} \in \tilde{I}} $ and maps 
	$\sigma_{j} \in W^{k,p}\left( W_{j}; G\right)$ for each $j \in J$  such that 
	\begin{align}\label{equivalence gauge relation}
	h_{\tilde{\phi}(i)\tilde{\phi}(j)} = \sigma_{i}^{-1}g_{\phi(i)\phi(j)}\sigma_{j}\qquad \text{ a.e. in } W_{i}\cap W_{j},
	\end{align} 
	for each pair $i,j \in J$ with $W_{i}\cap W_{j} \neq \emptyset,$ where $\phi: J  \rightarrow I$ and $\tilde{\phi}: J \rightarrow \tilde{I}$ are the respective refinement maps and $\left\lbrace g_{\alpha\beta} \right\rbrace_{\alpha, \beta \in I} $ and $\left\lbrace h_{\tilde{\alpha}\tilde{\beta}} \right\rbrace_{\tilde{\alpha}, \tilde{\beta} \in \tilde{I}}$ are the respective transition maps. We shall write $P \stackrel{W^{k,p}}{\simeq}_{\sigma} \tilde{P}$ to specify the equivalence map.  
\end{definition} 
Smooth or $C^{0}$ equivalence is defined in analogous manner by requiring the maps $\sigma_{j}$ to be smooth or $C^{0}$ respectively. It is easy to check that they are indeed equivalence relations in the corresponding category. If $P$ is a $C^{0}$-bundle, we denote its equivalence class under $C^{0}$-equivalence by $\left[P\right]_{C^{0}}.$   
\paragraph{Sobolev spaces of connections}
Now we define the Sobolev space $\mathcal{U}^{k,p}$ of connection form on a $W^{k,p}$ bundle $P \in \mathcal{P}^{k,p}_{G}\left(M^{n}\right).$ 
\begin{definition}[The $\mathcal{U}^{k,p}$ spaces of connections]
	We say the connection $A=\left\lbrace A_{\alpha} \right\rbrace_{\alpha \in I}$ is a $\mathcal{U}^{1,p}$-connection on $P$ if we have 
	\begin{align*}
	A_{\alpha} \in L^{p}\left( U_{\alpha}; \Lambda^{1}\mathbb{R}^{n}\otimes\mathfrak{g}\right) \text{ and } dA_{\alpha} \in L^{\frac{p}{2}}\left( U_{\alpha}; \Lambda^{2}\mathbb{R}^{n}\otimes\mathfrak{g}\right) \quad \text{ for every } \alpha \in I. 
	\end{align*}
	$\mathcal{U}^{1,p}\left(P\right),$ the space of $\mathcal{U}^{1,p}$-connection on $P$, is equipped with the norm  
	$$ \left\lVert A_{\alpha}\right\rVert_{\mathcal{U}^{k,p}\left( U_{\alpha}; \Lambda^{1}\mathbb{R}^{n}\otimes\mathfrak{g}\right)}:= \left\lVert A_{\alpha}\right\rVert_{L^{p}\left( U_{\alpha}; \Lambda^{1}\mathbb{R}^{n}\otimes\mathfrak{g}\right)} + \left\lVert dA_{\alpha}\right\rVert_{L^{\frac{p}{2}}\left( U_{\alpha}; \Lambda^{1}\mathbb{R}^{n}\otimes\mathfrak{g}\right)}.$$
	We say the connection $A=\left\lbrace A_{\alpha} \right\rbrace_{\alpha \in I}$ is a $\mathcal{U}^{2,p}$-connection on $P$ if we have
	\begin{align*}
	A_{\alpha} \in W^{1,\frac{p}{2}}\left( U_{\alpha}; \Lambda^{1}\mathbb{R}^{n}\otimes\mathfrak{g}\right)  \text{ for every } \alpha \in I.
	\end{align*}
	The $\mathcal{U}^{2,p}$ norm of $A_{\alpha}$ is simply its $W^{1, \frac{p}{2}}$ norm. \end{definition}

\section{Strong Density in the critical dimension}\label{elliptic estimates density}
\subsection{Smooth Approximation in subcritical regime}
We begin with the smooth approximation theorem for Sobolev bundles in the subcritical regime $kp > n.$ The validity of the result is well known to experts, but a complete proof is difficult to find in the literature. For $kp > n,$ $W^{k,p}$ bundles are $C^{0}$ bundles. Approximating $C^{0}$ bundles by $C^{\infty}$ ones are classical and one can, in particular, use the heavy machinery of classifying spaces ( cf. \cite{Husemoller_fibrebundles} ). But since we need to keep control of the Sobolev norms, it is unclear whether such an approach can be used in the Sobolev setting. On the other hand, one can smooth continuous bundles `by hand' ( see e.g. \cite{MullerWockel_equivalencebundles}, also \cite{NaberG_gaugetheorybook}, \cite{Steenrod_fibrebundles} ) and this approach is more amenable to the modifications needed to work in the Sobolev setting. Our proof here follows this road and adapt the arguments in \cite{MullerWockel_equivalencebundles} ( for the infinite dimensional case ) to work in our finite dimensional but Sobolev setting. As far as we are aware, this proof is new. But since this somewhat digresses from the main goal of our article, it is relegated to the Appendix \ref{proof of subcritical approx}.      
\begin{theorem}[Smooth approximation in subcritical regime]\label{smoothingC0bundle} 
	Given any $P \in \mathcal{P}_{G}^{k,p}\left(M^{n}\right)$ with $kp > n$ and any $\varepsilon > 0,$ there is a smooth principal $G$-bundle $P^{\varepsilon} \in \mathcal{P}_{G}^{\infty}\left(M^{n}\right)$ such that $P^{\varepsilon}$ is $\varepsilon$-close to $P$ in $W^{k,p}$ norm and  $P^{\varepsilon} \stackrel{W^{k,p}}{\simeq} P $ hold. Moreover, the $C^{0}$-equivalence maps can be chosen to lie in the $\varepsilon$-neighborhood of the identity element of $G$ in $C^{0}$ norm on each bundle chart. \smallskip 
	
	\noindent More precisely, if $P = \left( \left\lbrace U_{\alpha}\right\rbrace_{\alpha \in I}, \left\lbrace g_{\alpha\beta} \right\rbrace_{\alpha, \beta \in I} \right) $, then there exists a good refinement $\left\lbrace V_{j}\right\rbrace_{j \in J}$ of $\left\lbrace U_{\alpha}\right\rbrace_{\alpha \in I}$ such that there exists \textbf{continuous} maps $\sigma_{j} \in W^{k,p}\left(V_{j}; G\right)$ and 
	\textbf{smooth} transition maps $h_{ij}\in C^{\infty}\left(V_{i}\cap V_{j}; G\right)  $ for all $i,j \in J,$ whenever the intersection is non-empty, satisfying  
	\begin{itemize}
		\item[(i)] $h_{ij}h_{jk} = h_{ik} \quad \text{ for a.e. } x \in V_{ijk} \text{ whenever } V_{ijk} \neq \emptyset, $
		\item[(ii)] $h_{ij} = \sigma_{i}^{-1} g_{\phi(i)\phi(j)} \sigma_{j}  \qquad \text{ for a.e. } x \in V_{ij} \text{ whenever } V_{ij} \neq \emptyset.$ 
		\item[(iii)] $ \left\lVert h_{ij} - g_{\phi(i)\phi(j)}\right\rVert_{W^{k,p}\left( V_{ij};G\right)}  \leq \varepsilon$ whenever $V_{ijk} \neq \emptyset,$ where $\phi: J \rightarrow I$ is the refinement map.
		\item[(iv)] $ \left\lVert \sigma_{j} - \mathbf{1}_{G}\right\rVert_{L^{\infty}\left( V_{j}; G \right)} , \left\lVert d\sigma_{j} \right\rVert_{W^{k-1,p}\left( V_{j}; \Lambda^{1}\mathbb{R}^{n}\otimes\mathfrak{g} \right)} \leq \varepsilon $  for every $ j \in J.$ 
	\end{itemize}
\end{theorem}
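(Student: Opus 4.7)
Since $kp>n$, Morrey embedding gives $W^{k,p}\hookrightarrow C^{0,\alpha}$, so each transition map $g_{\alpha\beta}$ is already continuous and $P$ is in particular a $C^{0}$ principal bundle. Classical results guarantee that a smooth bundle equivalent to it exists; the quantitative novelty is the Sobolev closeness with explicit $C^{0}$ bounds on the gauges. The plan is to adapt the chart-by-chart inductive smoothing of Müller--Wockel, performing every operation inside an exponential chart on $G$ and controlling Sobolev norms at every stage.

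First, I pass to a good refinement $\{V_{j}\}_{j\in J}$ of the original cover fine enough that, for each nonempty $V_{i}\cap V_{j}$, the image $g_{\phi(i)\phi(j)}(V_{i}\cap V_{j})$ lies in an exponential normal neighborhood of some base element $g^{0}_{ij}\in G$; uniform continuity of the finitely many $g_{\alpha\beta}$ makes this possible. In this chart one writes $g_{\phi(i)\phi(j)}=g^{0}_{ij}\exp(X_{ij})$ with $X_{ij}\in W^{k,p}(V_{i}\cap V_{j};\mathfrak{g})$ of small $L^{\infty}$ norm, which is the setting in which mollification converges strongly in $W^{k,p}$ and $C^{0}$ simultaneously.

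Second, I enumerate $J=\{1,\dots,N\}$ and inductively build $\sigma_{j}$ and $h_{ij}$. Set $\sigma_{1}:=\mathbf{1}_{G}$, and suppose $\sigma_{1},\dots,\sigma_{r-1}$ and a smooth cocycle $\{h_{ij}\}_{i,j<r}$ satisfying (i)--(iv) have been constructed. I pick a base index $i_{0}=i_{0}(r)<r$ with $V_{i_{0}}\cap V_{r}\neq \emptyset$ and smooth $\sigma_{i_{0}}^{-1}g_{\phi(i_{0})\phi(r)}$ by convolution in the exponential chart to produce $h_{i_{0}r}$. For every other $i<r$ overlapping $V_{r}$, the cocycle identity dictates $h_{ir}=h_{ii_{0}}h_{i_{0}r}$ wherever $V_{i}\cap V_{i_{0}}\cap V_{r}\neq \emptyset$, and the definition is extended to the rest of $V_{i}\cap V_{r}$ smoothly via a partition-of-unity argument in the exponential chart, coherently across all $i$ so as to preserve the cocycle identity on every triple overlap $V_{i}\cap V_{i'}\cap V_{r}$. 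The gauge $\sigma_{r}$ is then forced on $V_{r}\cap\bigcup_{i<r}V_{i}$ by $\sigma_{r}:=g_{\phi(i)\phi(r)}^{-1}\sigma_{i}h_{ir}$, which is well-defined and continuous because the cocycle identities for $g$ and $h$ agree; it is extended smoothly to all of $V_{r}$ by interpolating to $\mathbf{1}_{G}$ in the exponential chart off the existing overlaps.

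The main technical obstacle is the coherent extension step: the $h_{ir}$ must be extended off the sets $V_{i}\cap V_{i_{0}}\cap V_{r}$ to all of $V_{i}\cap V_{r}$ in a way compatible with the exact cocycle condition on every triple overlap $V_{i}\cap V_{i'}\cap V_{r}$ that need not intersect $V_{i_{0}}$. This compatibility is achieved by performing all the smoothings in the exponential chart, where the cocycle condition linearizes to first order, and by using a single coherent partition of unity subordinate to $\{V_{l}\}_{l<r}$ to simultaneously extend all the $h_{ir}$ rather than doing them one at a time. The resulting $W^{k,p}$ and $C^{0}$ errors are controlled by the mollifier parameters via standard estimates for convolution, pointwise products, and composition with the smooth $\exp$ map -- all of which behave well in the subcritical regime $kp>n$ -- so the cumulative error over the finite induction can be made less than $\varepsilon$.
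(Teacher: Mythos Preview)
Your overall architecture---pass to a fine cover, work in exponential charts, induct over the charts---matches the paper's, and your first two paragraphs are fine. The gap is in the induction step itself, precisely at the ``coherent extension'' you flag as the main obstacle.

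The problem is that picking a single base index $i_{0}$ and setting $h_{ir}=h_{ii_{0}}h_{i_{0}r}$ only defines $h_{ir}$ on $V_{i}\cap V_{i_{0}}\cap V_{r}$, which can be a proper subset of (or disjoint from) $V_{i}\cap V_{r}$. You propose to fill in the rest by a partition of unity in exponential coordinates, arguing that the cocycle identity ``linearizes to first order''. But the cocycle identity $h_{ir}h_{rj}=h_{ij}$ is an \emph{exact} nonlinear constraint; in exponential coordinates it becomes Baker--Campbell--Hausdorff, and a convex combination of Lie-algebra data does not produce maps satisfying it exactly unless $G$ is abelian. A first-order approximation yields $h_{ij}h_{jk}h_{ki}$ close to $\mathbf{1}_{G}$, not equal to it, and there is no further mechanism in your argument that upgrades ``close'' to ``equal''. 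So as written, (i) need not hold.

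The paper sidesteps this with a \emph{second, nested} induction. To pass from $r_{0}$ to $r_{0}+1$ in the outer induction, it does not construct all $h_{i(r_{0}+1)}$ at once from a single base index; instead it runs an inner induction on $l_{0}$, adding one map $h_{(l_{0}+1)(r_{0}+1)}$ at a time. At that inner step the candidate $\hat{h}_{(l_{0}+1)(r_{0}+1)}:=[g^{r_{0}}_{i(l_{0}+1)}]^{-1}h^{l_{0}}_{i(r_{0}+1)}$ is defined simultaneously on the union over \emph{all} $i\leq l_{0}$ of the triple overlaps, and consistency on quadruple overlaps follows from the cocycle identities already in force---no linearization is invoked. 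Two auxiliary lemmas then do the work your partition-of-unity step cannot: an extension lemma (interpolating the $\mathcal{O}_{G}$-valued difference $g^{-1}\hat{h}$ between its given values and $\mathbf{1}_{G}$) pushes $\hat{h}$ out to all of $V_{l_{0}+1}\cap V_{r_{0}+1}$ as a $W^{k,p}$ map, and a \emph{relative} smooth approximation lemma smooths it while keeping it \emph{equal} to $\hat{h}$ on a neighbourhood of the union of triple overlaps. Because the final map literally agrees with the forced value there, the exact cocycle identity is inherited, not approximated. Your argument is missing precisely this ``smooth relative to a closed set where it is already smooth'' ingredient, which is what converts approximate compatibility into exact cocycles.
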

\begin{remark}
	Conclusion (i) simply expresses the fact that we indeed have a well-defined bundle structure on $P^{\varepsilon}.$ Conclusion (ii) encodes the assertion that $P^{\varepsilon}$ is equivalent to $P$. Conclusion (iii) is the precise meaning of $P^{\varepsilon}$ being $\varepsilon$-close to $P$ in $W^{k,p}$ norm. The estimate in conclusion (iv) is essentially equivalent to (iii), as was already essentially proved by Uhlenbeck in \cite{UhlenbeckGaugefixing}, Corollary 3.3.     
\end{remark}
\subsection{Coulomb gauges and elliptic estimates}
\begin{notation}
	$\mathfrak{M}\left( N\right)$ denote the space of $N\times N$ matrices and for $U \subset \mathbb{R}^{n}$ open and bounded, the notation $L^{\left(s, \theta \right)}\left( U; \mathbb{R}^{N}\right)$ for any $1 < s < \infty$ and any $ 1 \leq \theta < \infty$ will denote the Lorentz space of maps $\left\lbrace f: U \rightarrow \mathbb{R}^{N}:\left\lVert f \right\rVert_{L^{\left( s,\theta \right)}\left(U; \mathbb{R}^{N} \right)} < \infty \right\rbrace,$ which is a Banach space ( see \cite{SteinWeiss_Fourieranalysis} ) with a norm equivalent to the quasinorm 
	$$ \left\lVert f \right\rVert_{L^{\left( s,\theta \right)}\left(U; \mathbb{R}^{N} \right)}^{\theta} 
	:= \int_{0}^{\infty} \left[ t^{s} \operatorname{meas}\left( \left\lbrace x \in U : \left\lvert f \right\rvert > t \right\rbrace\right) \right]^{\frac{\theta}{s}} \frac{\mathrm{d}t}{t}. $$
\end{notation}
\noindent Now we start with the elliptic estimates. The following lemmas are crucial for what we shall be doing in the rest of the article. They will be used to prove regularity of bundles in which the connection is in the Coulomb gauge in the critical dimension. Continuity of Coulomb bundles is first observed by Taubes \cite{Taubes_YangMillsModulispaces} for $n=4$ and for any $n \geq 4$ by Rivi\`{e}re \cite{Riviere_QuantizationYangMills}. Here we show such bundles are $C^{0, \alpha}$ bundle for any $\alpha < 1.$ However, $L^{p}$ version of Lemma \ref{ellipticCritical} has been used earlier in this context by Shevchishin in \cite{Shevchishin_limitholonomyYangMills} to prove H\"{o}lder continuity of the Coulomb bundles. But these results do not seem to be widely known.   
\begin{lemma}\label{elliptic estimate with zero boundary value}
Let $\Omega \subset \mathbb{R}^{n}$ be a bounded, open and smooth subset and suppose $ A \in L^{n}\left(\Omega;\Lambda^{1}\mathbb{R}^{n}\otimes \mathfrak{M}\left( N\right)\right).$ If $\alpha \in W_{0}^{1,2}\left( \Omega; \mathbb{R}^{N}\right)$ satisfies \begin{align}\label{critical elliptic with zero boundary}
\Delta \alpha = A\cdot \nabla \alpha + F \qquad \text{ in } \Omega,
\end{align}
with $F \in L^{q}\left( \Omega; \mathbb{R}^{N}\right)$ for some $ \frac{2n}{n+2} \leq q < n, $ or respectively, $F \in L^{\left(s, \theta \right)}\left( \Omega; \mathbb{R}^{N}\right)$ for some  $ \frac{2n}{n+2} < s < n $ and $1 \leq \theta < \infty ,$ then there exists a small constant $\varepsilon_{1}= \varepsilon_{1}\left( n,N,q, \Omega \right) > 0,$ respectively, $\varepsilon_{1}= \varepsilon_{1}\left( n,N,s, \theta, \Omega \right) > 0,$ such that if $$ \left\lVert A \right\rVert_{L^{n}\left(\Omega;\Lambda^{1}\mathbb{R}^{n}\otimes \mathfrak{M}\left( N\right)\right)} \leq \varepsilon_{1},$$ then $\alpha \in W^{2,q}\left( \Omega; \mathbb{R}^{N}\right),$ respectively $\alpha \in W^{2,\left(s, \theta \right)}\left( \Omega; \mathbb{R}^{N}\right),$ and there exists a constant $C_{\Omega}= C_{\Omega}\left( n,N,q, \Omega \right) \geq 1, $ respectively $C_{\Omega}= C_{\Omega}\left( n,N,s,\theta, \Omega \right) \geq 1, $ such that we have the estimate 
\begin{align}
\left\lVert \alpha \right\rVert_{W^{2,q}\left( \Omega; \mathbb{R}^{N}\right)} &\leq C_{\Omega} \left\lVert F \right\rVert_{L^{q}\left( \Omega; \mathbb{R}^{N}\right)}, \\ \intertext{respectively,} 
\left\lVert \alpha \right\rVert_{W^{2,\left(s, \theta \right)}\left( \Omega; \mathbb{R}^{N}\right)} &\leq C_{\Omega} \left\lVert F \right\rVert_{L^{\left(s, \theta \right)}\left( \Omega; \mathbb{R}^{N}\right)}. 
\end{align}
Furthermore, the smallness parameter $\varepsilon_{1}$ is scale invariant. More precisely, if $\Omega_{r}= \left\lbrace rx : x \in \Omega \right\rbrace $ is a rescaling of $\Omega,$ then $\Omega$ and $\Omega_{r}$ has the same $\varepsilon_{\Delta_{Cr}}.$ 
\end{lemma}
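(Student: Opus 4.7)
The plan is a Banach fixed-point argument on $X := W_{0}^{1,2}(\Omega;\mathbb{R}^{N}) \cap W^{2,q}(\Omega;\mathbb{R}^{N})$ (and its Lorentz analogue), treating the first-order term $A \cdot \nabla$ as a small perturbation of the Dirichlet Laplacian. For each $v \in X$, define $Tv$ to be the unique $W_{0}^{1,2}$ solution of $\Delta(Tv) = A \cdot \nabla v + F$. Standard Calder\'on--Zygmund for the Dirichlet Laplacian on the smooth domain $\Omega$ gives $Tv \in W^{2,q}$ with
\[
\|Tv\|_{W^{2,q}} \le C_{CZ}\bigl(\|A\cdot\nabla v\|_{L^{q}} + \|F\|_{L^{q}}\bigr).
\]
The hypothesis $q \ge \tfrac{2n}{n+2}$ is precisely what guarantees that the Sobolev conjugate $q^{*} := \tfrac{nq}{n-q} \ge 2$, so that $\nabla v \in L^{q^{*}}(\Omega) \subset L^{2}(\Omega)$ automatically, making the $W^{2,q}$ regularity compatible with the $W_{0}^{1,2}$ trace inherited by $\alpha$.

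The crucial bilinear estimate comes from H\"older combined with the Sobolev embedding $W^{2,q} \hookrightarrow W^{1,q^{*}}$: since $\tfrac{1}{q} = \tfrac{1}{n} + \tfrac{1}{q^{*}}$,
\[
\|A\cdot\nabla v\|_{L^{q}} \le \|A\|_{L^{n}}\|\nabla v\|_{L^{q^{*}}} \le C_{S}\|A\|_{L^{n}}\|v\|_{W^{2,q}}.
\]
Combined with the CZ bound this yields $\|Tv - Tw\|_{W^{2,q}} \le C_{CZ}C_{S}\|A\|_{L^{n}}\|v-w\|_{W^{2,q}}$. Choosing $\varepsilon_{1}$ so that $C_{CZ}C_{S}\varepsilon_{1} \le \tfrac{1}{2}$ makes $T$ a $\tfrac{1}{2}$-contraction on $X$; the Banach fixed-point theorem produces a unique $\widetilde{\alpha} \in X$ satisfying $\|\widetilde{\alpha}\|_{W^{2,q}} \le 2 C_{CZ}\|F\|_{L^{q}}$, which yields the claimed estimate with $C_{\Omega} = 2 C_{CZ}$.

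The step I expect to be the main subtlety is identifying the fixed point $\widetilde{\alpha}$ with the given $\alpha$, since $\alpha$ is only a priori in $W_{0}^{1,2}$. For this, the difference $w := \alpha - \widetilde{\alpha} \in W_{0}^{1,2}$ satisfies the homogeneous equation $\Delta w = A\cdot\nabla w$; testing against $w$, integrating by parts and applying H\"older together with the Sobolev embedding $W_{0}^{1,2} \hookrightarrow L^{2n/(n-2)}$ gives $\|\nabla w\|_{L^{2}}^{2} \le C_{S}\|A\|_{L^{n}}\|\nabla w\|_{L^{2}}^{2}$, forcing $w \equiv 0$ after one further shrinkage of $\varepsilon_{1}$. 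The Lorentz version is structurally identical: ordinary CZ is replaced by its Lorentz analogue (obtained by real interpolation from the $L^{p}$ statement), the Sobolev embedding by $W^{1,(s,\theta)} \hookrightarrow L^{(s^{*},\theta)}$, and H\"older in $L^{p}$ by H\"older in Lorentz spaces applied to $A \in L^{n} = L^{(n,n)}$. Finally, scale invariance of $\varepsilon_{1}$ follows because the conformal rescaling $A_{r}(y) := r^{-1}A(y/r)$ preserves $\|A\|_{L^{n}}$ while the top-order CZ and Sobolev inequalities that enter the contraction have dilation-invariant constants; only the a posteriori constant $C_{\Omega}$, which also absorbs lower-order norms, depends on the scale of $\Omega$.
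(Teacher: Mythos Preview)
Your proposal is correct and follows essentially the same approach as the paper: a Banach fixed-point argument on $W_{0}^{1,2}\cap W^{2,q}$ (resp.\ its Lorentz version), using Calder\'on--Zygmund for the Dirichlet Laplacian together with H\"older and the Sobolev embedding $W^{2,q}\hookrightarrow W^{1,q^{*}}$ to make $T$ a contraction, followed by a uniqueness argument to identify the fixed point with the given $\alpha$, and the same rescaling observation for scale invariance. The only minor variation is in the uniqueness step: you test the homogeneous equation against $w$ and use the energy estimate, whereas the paper instead applies the CZ bound at the endpoint exponent $\tfrac{2n}{n+2}$ to get $\|w\|_{W^{1,2}}\le C\|w\|_{W^{2,2n/(n+2)}}\le C\|A\|_{L^{n}}\|w\|_{W^{1,2}}$; both are standard and equally valid.
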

\begin{proof}
The proof is a fixed point argument coupled with uniqueness. We only prove the Lorentz case.  With $s$ and $\theta$ as in the lemma, for any $v \in W^{2,\left(s, \theta \right)}\cap W_{0}^{1,2}$, let $T(v) \in W_{0}^{1,2}$ be the solution of the equation $\Delta \left( T(v) \right) = A.\nabla v + F.$  Since by Peetre-Sobolev embedding ( see \cite{Peetre_Lorentzembedding}, \cite{Tartar_Lorentzembeddings} ), $W^{2,\left(s, \theta \right)} \hookrightarrow W^{1, \left(\frac{ns}{n-s}, \theta \right)}$ and $L^{n}= L^{\left(n, n \right)},$ by H\"{o}lder inequality for Lorentz spaces the term $A.\nabla v $ in the right hand side is $L^{\left(s, \frac{n\theta}{n+\theta} \right)} \hookrightarrow L^{\left(s, \theta \right)} $ if $\theta > \frac{n}{n-1}$ or $L^{\left(s, 1 \right)} \hookrightarrow L^{\left(s, \theta \right)}$ otherwise. Since $f \in L^{\left(s, \theta \right)},$ by the usual $L^{p}$ estimate for the Laplacian, which extends by interpolation to Lorentz spaces ( see \cite{SteinWeiss_Fourieranalysis} ), we conclude $\nabla^{2}T(v) \in L^{\left(s, \theta \right)}$. Noting that the $L^{\left(s, \theta \right)}$ norm of the Hessian is an equivalent norm on $W^{2,\left(s, \theta \right)}\cap W_{0}^{1,2}$, we deduce $T(v) \in W^{2,\left(s, \theta \right)}$ along with the estimate $$ \left\lVert T(v)- T(w)\right\rVert_{W^{2,\left(s, \theta \right)}} \leq C_{q} \left\lVert A \right\rVert_{L^{n}}\left\lVert v- w\right\rVert_{W^{2,\left(s, \theta \right)}} $$ for any $v,w \in W^{2,\left(s, \theta \right)}.$ Then we can choose $\left\lVert A \right\rVert_{L^{n}}$ small enough such that $T$ is a contraction and conclude the existence of an unique fixed point $v_{0} \in W^{2,\left(s, \theta \right)}$ by Banach fixed point theorem. Since both $v_{0}$ and $\alpha$ are $W_{0}^{1,2}$ solutions of \eqref{critical elliptic with zero boundary} and we have the estimate  
\begin{align*}
\left\lVert \alpha - v_{0}\right\rVert_{W^{1,2}} \leq C \left\lVert \alpha - v_{0} \right\rVert_{W^{2,\frac{2n}{n+2}}}\leq C \left\lVert A \right\rVert_{L^{n}}\left\lVert \alpha - v_{0}\right\rVert_{W^{1,2}}, 
\end{align*} we can choose $\left\lVert A \right\rVert_{L^{n}}$ small enough such that $C \left\lVert A \right\rVert_{L^{n}} < 1,$ which forces $\alpha=v_{0}.$ Thus $\alpha \in W^{2,\left(s, \theta \right)}\left( \Omega; \mathbb{R}^{N}\right)$ and we get the estimate 
$$ \left\lVert \alpha \right\rVert_{W^{2,\left(s, \theta \right)}} \leq C_{s,\theta} \left( \left\lVert A \right\rVert_{L^{n}}\left\lVert \alpha \right\rVert_{W^{2,\left(s, \theta \right)}} + \left\lVert F \right\rVert_{L^{\left(s, \theta \right)}}\right). $$ But since  $C_{s,\theta} \left\lVert A \right\rVert_{L^{n}} < 1, $ setting $C_{\Omega} = \frac{C_{s,\theta}}{\left( 1 - C_{s,\theta} \left\lVert A \right\rVert_{L^{n}}\right)}$ proves the lemma except the claim about scaling. Now if $\alpha, A, F$ satisfies \eqref{critical elliptic with zero boundary} in $\Omega_{r},$ then the rescaled maps $\tilde{\alpha}(x):= \alpha (rx), \tilde{A}(x):= r A(rx),$ $\tilde{F}:= r^{2}F(rx)$ satisfies \eqref{critical elliptic with zero boundary} in $\Omega.$ The scale invariance follows from the equality of $L^{n}$ norms of $\tilde{A}$ and $A.$
\end{proof}
\begin{lemma}[Elliptic estimate in critical setting]\label{ellipticCritical}
	Let $\Omega \subset \mathbb{R}^{n}$ be a bounded, open set and $ A \in L^{n}\left(\Omega;\Lambda^{1}\mathbb{R}^{n}\otimes \mathfrak{M}\left( N\right)\right).$ Let $f \in L^{p}\left( \Omega; \mathbb{R}^{N}\right)$ for some $ \frac{2n}{n+2} \leq p < n, $ or respectively, $f \in L^{\left(q, \theta \right)}\left( \Omega; \mathbb{R}^{N}\right)$ for some  $ \frac{2n}{n+2} < q < n $ and $1 \leq \theta < \infty .$ Then there exists a small constant $\varepsilon_{\Delta_{Cr}} = \varepsilon_{\Delta_{Cr}}\left( n,N,p, \Omega \right) > 0,$ respectively $\varepsilon_{\Delta_{Cr}} = \varepsilon_{\Delta_{Cr}}\left( n,N,q,\theta, \Omega \right) > 0,$ such that if $$ \left\lVert A \right\rVert_{L^{n}\left(\Omega;\Lambda^{1}\mathbb{R}^{n}\otimes \mathfrak{M}\left( N\right)\right)} \leq \varepsilon_{\Delta_{Cr}},$$ then for any solution $u \in W^{1,2}\left(\Omega; \mathbb{R}^{N}\right)$ of 
	\begin{equation}\label{critical elliptic eqn}
	\Delta u = A\cdot \nabla u + f \qquad \text{ in } \Omega, 
	\end{equation} we have $u \in W_{loc}^{2,p}\left(\Omega; \mathbb{R}^{N}\right),$ respectively $u \in W_{loc}^{2,\left(q, \theta \right)}\left(\Omega; \mathbb{R}^{N}\right).$ Furthermore, for any compact set $K \subset \subset \Omega, $  the exists a constant $C = C(n, N, p, \Omega, K ) \geq 1, $ respectively $C = C(n, N, q, \theta, \Omega, K ) \geq 1, $ such that we have the estimate 
	\begin{align}
	\left\lVert u \right\rVert_{W^{2,p}\left( K;\mathbb{R}^{N} \right)} &\leq C \left( \left\lVert u \right\rVert_{W^{1, 2}\left(\Omega; \mathbb{R}^{N}\right)} + \left\lVert f \right\rVert_{L^{p}\left(\Omega; \mathbb{R}^{N}\right)} \right), \\
	\intertext{respectively,}
	\left\lVert u \right\rVert_{W^{2,\left(q, \theta \right)}\left( K;\mathbb{R}^{N} \right)} &\leq C \left( \left\lVert u \right\rVert_{W^{1, 2}\left(\Omega; \mathbb{R}^{N}\right)} + \left\lVert f \right\rVert_{L^{\left(q, \theta \right)}\left(\Omega; \mathbb{R}^{N}\right)} \right).
	 \end{align}
	Moreover, the smallness parameter $\varepsilon_{\Delta_{Cr}}$ is scale invariant. More precisely, if $\Omega_{r}= \left\lbrace rx : x \in \Omega \right\rbrace $ is a rescaling of $\Omega,$ then $\Omega$ and $\Omega_{r}$ has the same $\varepsilon_{\Delta_{Cr}}.$ 
\end{lemma}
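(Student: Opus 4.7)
My plan is to reduce the interior estimate to the zero--boundary--value estimate of Lemma~\ref{elliptic estimate with zero boundary value} via a cutoff, and then, if the target exponent exceeds $2$, to bootstrap the integrability across a chain of shrinking subdomains. The scale invariance of $\varepsilon_{\Delta_{Cr}}$ will be inherited directly from that of $\varepsilon_{1}$ in the preceding lemma.

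First I would fix a finite nested sequence of smooth open sets $K \subset\subset \Omega_m \subset\subset \cdots \subset\subset \Omega_1 \subset\subset \Omega_0 \subset\subset \Omega$, the number $m$ depending only on $n$ and $p$ (respectively $n,q$). For each $k = 1,\ldots,m$ pick $\eta_k \in C_c^{\infty}(\Omega_{k-1})$ with $\eta_k \equiv 1$ on $\Omega_k$, set $v = \eta_k u$, and use $\eta_k \nabla u = \nabla v - u\,\nabla \eta_k$ in \eqref{critical elliptic eqn} to rewrite
\[
\Delta v = A\cdot \nabla v + F_k, \qquad F_k := \eta_k f + 2\,\nabla \eta_k\cdot \nabla u + (\Delta \eta_k)\,u - (A\cdot \nabla \eta_k)\,u.
\]
Then $v \in W_0^{1,2}(\Omega_{k-1})$ satisfies an equation of the form \eqref{critical elliptic with zero boundary}. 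Since $\|A\|_{L^n(\Omega_{k-1})} \leq \|A\|_{L^n(\Omega)}$, the plan is to choose $\varepsilon_{\Delta_{Cr}}$ as the minimum over $k=1,\ldots,m$ of the thresholds $\varepsilon_{1}$ produced by Lemma~\ref{elliptic estimate with zero boundary value} on each $\Omega_{k-1}$, so that the lemma is applicable at every iteration.

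Next I would run the bootstrap. At step $k=1$, Sobolev embedding gives $u\in L^{2^{\ast}}(\Omega_0)$ together with $\nabla u\in L^2(\Omega_0)$, so by H\"{o}lder (using $A\in L^n$) the two derivative--of--cutoff terms in $F_1$ lie in $L^2(\Omega_0)$, while $\eta_1 f\in L^p$; hence $F_1 \in L^{\min(p,2)}(\Omega_0)$, and Lemma~\ref{elliptic estimate with zero boundary value} yields $u\in W^{2,\min(p,2)}(\Omega_1)$ with the correct right--hand side $\|u\|_{W^{1,2}(\Omega)} + \|f\|_{L^p(\Omega)}$. If $p\le 2$ this already concludes the proof; otherwise I iterate. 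Having $u\in W^{2,p_k}(\Omega_k)$ with $p_k<p$, Sobolev embedding gives $\nabla u \in L^{p_k^{\ast}}(\Omega_k)$ and $u \in L^{(p_k^{\ast})^{\ast}}(\Omega_k)$ (or $L^{\infty}$ once $2p_k>n$); the lower--order pieces of $F_{k+1}$ then sit in $L^{\min(p_k^{\ast},p)}$ by H\"{o}lder, and applying Lemma~\ref{elliptic estimate with zero boundary value} to $\eta_{k+1}u$ upgrades $u$ to $W^{2,p_{k+1}}(\Omega_{k+1})$ with $p_{k+1}=\min(p_k^{\ast},p)$. The identity $1/p_k^{\ast}=1/p_k - 1/n$ makes the gain in exponent at each step bounded below by a positive quantity depending only on $n$, so $m$ is finite and can be fixed a priori; after $m$ steps I obtain $u\in W^{2,p}(K)$ with the stated estimate. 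The Lorentz variant is parallel, invoking the Peetre--Sobolev embedding $W^{2,(q,\theta)}\hookrightarrow W^{1,(q^{\ast},\theta)}$, H\"{o}lder for Lorentz spaces, and $L^n = L^{(n,n)}$; only the first Lorentz index increases through the iteration while $\theta$ is preserved.

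Finally, the scale invariance of $\varepsilon_{\Delta_{Cr}}$ follows from its counterpart for $\varepsilon_1$: under $x\mapsto rx$ the equation \eqref{critical elliptic eqn} retains the same structure with $\tilde A(x)=rA(rx)$ and $\tilde f(x)=r^2 f(rx)$, cutoffs transport in parallel, and $\|\tilde A\|_{L^n}=\|A\|_{L^n}$. The main delicacy I expect is keeping the smallness constant \emph{uniform} through the bootstrap; this is handled by fixing $m$ in advance depending on $n$ and $p$, taking $\varepsilon_{\Delta_{Cr}}$ to be the minimum of the finitely many thresholds $\varepsilon_{1}$ corresponding to the $\Omega_{k-1}$'s, and noting that the $L^n$ mass of $A$ on any subdomain is controlled by its mass on $\Omega$.
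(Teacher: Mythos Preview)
Your proposal is correct and follows essentially the same route as the paper: localize with a cutoff to reduce to the zero--boundary problem of Lemma~\ref{elliptic estimate with zero boundary value}, bootstrap the integrability of $\nabla u$ along a finite chain of nested subdomains, and take the minimum of the finitely many smallness thresholds. The only cosmetic differences are that the paper tracks the bootstrap via $u\in W^{1,\frac{2n}{n-2l}}(\Omega_l)$ rather than your $u\in W^{2,p_k}(\Omega_k)$ (equivalent by Sobolev), and in the Lorentz case the paper bootstraps first in ordinary $L^p$ spaces and only invokes the Lorentz version of Lemma~\ref{elliptic estimate with zero boundary value} at the final step, whereas you carry a fixed second index $\theta$ throughout; both are fine.
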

\begin{remark}
	Later on, to satisfy the smallness condition on the $L^{n}$ norm of $A,$ we are going to shrink the balls. So the scale invariance conclusion is crucial. 
\end{remark}
\begin{proof}
We localize the problem and bootstrap. We prove only the Lorentz case. To this end, we chose $m=1$ if $q \leq \frac{2n}{n-2}$ and otherwise we chose the smallest integer $m \geq 2$ such that $\frac{n(q-2)}{2q} \leq m < \frac{n}{2} $ if $\theta \geq  q$ or $\frac{n(q-2)}{2q} < m < \frac{n}{2} $ if $1 \leq \theta < q.$ For any $K \subset \subset \Omega,$ we choose open sets $\left\lbrace \Omega_{l}\right\rbrace_{1 \leq l \leq m}$ such that 
\begin{align*}
K \subset \subset \Omega_{m} \subset \subset \ldots \Omega_{l+1} \subset \subset \Omega_{l} \subset \subset \ldots \Omega_{1}\subset \subset \Omega.  
\end{align*}
Now we shall show that we can choose $\left\lVert A \right\rVert_{L^{n}}$ small enough such that for any solution $u \in W^{1,2}\left(\Omega; \mathbb{R}^{N}\right)$ of 
\eqref{critical elliptic eqn}, we have $u \in W^{1, \frac{2n}{n-2m}}\left(\Omega_{m}; \mathbb{R}^{N} \right).$ We show only the case $m \geq 2,$ the other case being easier. We prove this by induction over $l.$ We assume that $u \in W^{1, \frac{2n}{n-2l}}\left(\Omega_{l}; \mathbb{R}^{N} \right)$ for some $1 \leq l \leq m-1$ and prove that we can choose $\left\lVert A \right\rVert_{L^{n}}$ small enough such that $u \in W^{1, \frac{2n}{n-2(l+1)}}\left(\Omega_{l+1}; \mathbb{R}^{N} \right). $ Let $\phi \in C_{c}^{\infty}\left(\Omega_{l}\right)$ be a smooth cut-off function such that $\phi \equiv 1$ in a neighborhood of $\Omega_{l+1}.$ Since $u$ is a solution to \eqref{critical elliptic eqn}, $\phi u \in W_{0}^{1,2} \left(\Omega_{l}; \mathbb{R}^{N} \right)$ solves 
\begin{align}\label{localized equation}
\Delta \left(\phi u \right) = A \cdot \nabla \left(\phi u \right) + \phi f + u \left( \Delta \phi -  A \cdot \nabla \phi \right) + 2 \nabla \phi \cdot \nabla u  \qquad \text{ in } \Omega_{l}.  
\end{align}
We set $\alpha = \phi u$ and $F = \phi f + u \left( \Delta \phi -  A \cdot \nabla \phi \right) + 2 \nabla \phi \cdot \nabla u $ and plan to use lemma \ref{elliptic estimate with zero boundary value}. Note that by our choice of $m,$ the integrability of $F$ is determined by the least regular terms, which are in $L^{\frac{2n}{n-2l}}\left(\Omega_{l}; \mathbb{R}^{N} \right).$ Thus, using lemma \ref{elliptic estimate with zero boundary value} with $q = \frac{2n}{n-2l},$ we obtain $\phi u \in W^{2, \frac{2n}{n-2l}}\left(\Omega_{l}; \mathbb{R}^{N} \right)$ and thus by Sobolev embedding, $ \phi u  \in W^{1, \frac{2n}{n-2(l+1)}}\left(\Omega_{l}; \mathbb{R}^{N} \right).$ Since $\phi$ is identically $1$ in a neighborhood of $\Omega_{l+1},$ this proves the induction step. The same argument shows that $u \in W^{1, \frac{2n}{n-2}}\left(\Omega_{1}; \mathbb{R}^{N} \right),$ since $u \in W^{1,2}\left(\Omega; \mathbb{R}^{N} \right)$ and thus we can start the induction. This establishes that $u \in W^{1, \frac{2n}{n-2m}}\left(\Omega_{m}; \mathbb{R}^{N} \right)$ if $\left\lVert A \right\rVert_{L^{n}}$ is small enough. 

Once again we choose a smooth cut-off function $\phi \in C_{c}^{\infty}\left(\Omega_{m}\right)$ such that $\phi \equiv 1$ in a neighborhood of $K.$ Once again, $\phi u $ satisfies \eqref{localized equation} in $\Omega_{m}.$ But since $m > \frac{n(q-2)}{2q}$ if $1 \leq \theta < q,$ this time the integrability of $F$ is determined by the first term $\phi f,$ which is $L^{\left(q, \theta \right)}\left(\Omega_{m}; \mathbb{R}^{N} \right).$ Thus applying Lemma \ref{elliptic estimate with zero boundary value} once again, we deduce that $\phi u \in W^{2,\left(q, \theta \right)}\left(\Omega_{m}; \mathbb{R}^{N} \right)$ and thus $u \in W^{2,\left(q, \theta \right)}\left(K; \mathbb{R}^{N} \right)$ if $\left\lVert A \right\rVert_{L^{n}}$ is small enough. We finally choose the smallness parameter to be the minimum of the smallness parameters in the finitely many steps. Combining the estimates in each step yields 
$$\left\lVert u \right\rVert_{W^{2,\left(q, \theta \right)}\left( K;\mathbb{R}^{N} \right)} \leq C \left(  \left\lVert u \right\rVert_{W^{1,2}\left( \Omega;\mathbb{R}^{N} \right)}+ \left\lVert f \right\rVert_{L^{\left(q, \theta \right)}\left(\Omega; \mathbb{R}^{N}\right)} \right)  . $$
This concludes the proof. The scale invariance can be shown as before. \end{proof} 
\begin{lemma}[Coulomb gauges]\label{Coulombgauge}
	Let $r>0$ be a real number, $x_{0} \in \mathbb{R}^{n}$ and let $B_{r}(x_{0})\subset \mathbb{R}^{n}$ be the ball of radius $r$ around $x_{0}$. Then there exist constants $\varepsilon_{Coulomb} = \varepsilon_{coulomb}\left( G, n \right)  > 0 $ and $C_{Coulomb}= C_{Coulomb}\left( G, n \right) \geq 1 $ such that for any $A \in \mathcal{U}^{1,n}\left(B_{r}(x_{0}) \right)$ with $$ \left\lVert F_{A}\right\rVert_{L^{\frac{n}{2}}\left( B_{r}(0);\Lambda^{2} T^{\ast}B_{r}(0)\otimes \mathfrak{g} \right)} \leq \varepsilon_{Coulomb},$$
	there exists $\rho \in W^{1,n}\left(B_{r}(x_{0}) ; G \right)$ such that 
	\begin{align*}
	\left\lbrace \begin{aligned}
	&d^{\ast} A^{\rho} = 0 \quad \text{ in } B_{r}(x_{0}),  \\
&\iota^{\ast}_{\partial B_{r}(x_{0}) } \left( \ast A^{\rho}\right)  = 0 \quad \text{ on } \partial B_{r}(x_{0})	\end{aligned}\right.
	\end{align*}
	and we have the estimates 
\begin{multline*}
\left\lVert \nabla A^{\rho}\right\rVert_{L^{\frac{n}{2}}\left( B_{r}(x_{0});\mathbb{R}^{n\times n}\otimes \mathfrak{g} \right)} + \left\lVert  A^{\rho}\right\rVert_{L^{n}\left( B_{r}(x_{0});\Lambda^{1}\mathbb{R}^{n}\otimes \mathfrak{g} \right)} \\ \leq C_{Coulomb}\left\lVert F_{A}\right\rVert_{L^{\frac{n}{2}}\left( B_{r}(x_{0});\Lambda^{2}\mathbb{R}^{n}\otimes \mathfrak{g} \right)} 
\end{multline*}	
and 
\begin{multline*}
 \left\lVert  d\rho\right\rVert_{L^{n}\left( B_{r}(x_{0});\Lambda^{1}\mathbb{R}^{n}\otimes \mathfrak{g} \right)} \\ \leq C_{G}\left( C_{Coulomb}\left\lVert F_{A}\right\rVert_{L^{\frac{n}{2}}\left( B_{r}(x_{0});\Lambda^{2}\mathbb{R}^{n}\otimes \mathfrak{g} \right)}  + \left\lVert  A\right\rVert_{L^{n}\left( B_{r}(x_{0});\Lambda^{1}\mathbb{R}^{n}\otimes \mathfrak{g} \right)} \right),
\end{multline*}	
where $C_{G} \geq 1 $ is an $L^{\infty}$ bound for $G.$
\end{lemma}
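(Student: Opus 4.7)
The plan is to follow the classical method of continuity due to Uhlenbeck \cite{UhlenbeckGaugefixing}, whose key analytic input in the critical dimension is the borderline Sobolev embedding $W^{1,n/2}(B)\hookrightarrow L^{n}(B)$ that turns the quadratic nonlinearity $A^{\rho}\wedge A^{\rho}$ into an absorbable term once $\|A^{\rho}\|_{L^{n}}$ is small. First, I reduce to the unit ball by the rescaling $\tilde A(x):=rA(rx_{0}+rx)$, which preserves $\|A\|_{L^{n}}$, $\|F_{A}\|_{L^{n/2}}$, as well as the Coulomb equation and the vanishing-normal-trace boundary condition. Then I set $A_{t}:=tA$ for $t\in[0,1]$ and define
\[
\mathcal{S}:=\Bigl\{\,t\in[0,1]: \exists\,\rho_{t}\in W^{1,n}(B_{1};G)\text{ with }A_{t}^{\rho_{t}}\text{ Coulomb and }\|A_{t}^{\rho_{t}}\|_{L^{n}}+\|\nabla A_{t}^{\rho_{t}}\|_{L^{n/2}}\le 2C_{Coulomb}\|F_{A_{t}}\|_{L^{n/2}}\,\Bigr\}.
\]
Trivially $0\in\mathcal{S}$ with $\rho_{0}\equiv\mathbf{1}_{G}$, so it suffices to show that $\mathcal{S}$ is both open and closed in $[0,1]$.

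For openness I invoke the implicit function theorem at a point $t_{0}\in\mathcal{S}$. After passing to the trivialization in which $A_{t_{0}}^{\rho_{t_{0}}}$ is already Coulomb, the map $\xi\mapsto d^{\ast}\!\left(A^{\exp(\xi)}\right)$ for $\xi\in W^{1,n}\cap L^{\infty}(B_{1};\mathfrak{g})$ has linearization $-\Delta$ plus a lower-order term involving $A_{t_{0}}^{\rho_{t_{0}}}$, and the Neumann problem $\Delta u=f$ in $B_{1}$, $\partial_{\nu}u=0$ on $\partial B_{1}$ (modulo constants in $\mathfrak{g}$) is an isomorphism on the appropriate Sobolev scales by standard elliptic theory. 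This yields gauges $\rho_{t}$ for $t$ near $t_{0}$; the bootstrap constant $2C_{Coulomb}$ in the definition of $\mathcal{S}$ gives enough slack to absorb the small perturbation from varying $t$.

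The heart of the proof is closedness, via an a priori estimate that \emph{improves} the constant from $2C_{Coulomb}$ to $C_{Coulomb}$. In Coulomb gauge we have
\[
dA^{\rho}=F_{A^{\rho}}-A^{\rho}\wedge A^{\rho},\qquad d^{\ast}A^{\rho}=0,\qquad \iota_{\partial B_{1}}^{\ast}(\ast A^{\rho})=0,
\]
and $|F_{A^{\rho}}|=|F_{A}|$ pointwise by gauge-covariance. A Gaffney/Hodge estimate for $1$-forms on $B_{1}$ with vanishing codifferential and vanishing normal trace gives $\|A^{\rho}\|_{W^{1,n/2}}\le C\,\|dA^{\rho}\|_{L^{n/2}}$, and since $\|A^{\rho}\wedge A^{\rho}\|_{L^{n/2}}\le\|A^{\rho}\|_{L^{n}}^{2}$ and $W^{1,n/2}\hookrightarrow L^{n}$, we obtain
\[
\|A^{\rho}\|_{L^{n}}+\|\nabla A^{\rho}\|_{L^{n/2}}\le C\|F_{A}\|_{L^{n/2}}+C\|A^{\rho}\|_{L^{n}}^{2}.
\]
Under the bootstrap hypothesis $\|A^{\rho}\|_{L^{n}}\le 2C_{Coulomb}\|F_{A}\|_{L^{n/2}}\le 2C_{Coulomb}\,\varepsilon_{Coulomb}$, choosing $\varepsilon_{Coulomb}$ small enough that $2CC_{Coulomb}\varepsilon_{Coulomb}\le\tfrac12$ lets the quadratic term be absorbed and yields the strict improvement to the constant $C_{Coulomb}$ (defined as $2C$). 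This strict improvement is precisely what closes $\mathcal{S}$: given $t_{k}\to t_{\ast}$ with $t_{k}\in\mathcal{S}$, the $A_{t_{k}}^{\rho_{t_{k}}}$ are uniformly bounded in $W^{1,n/2}$, one extracts a weak limit whose gauge $\rho_{t_{\ast}}$ arises from integrating $d\rho=\rho A^{\rho}-A\rho$, and the limit satisfies the improved estimate, hence \emph{a fortiori} the defining estimate with constant $2C_{Coulomb}$.

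Finally, to extract the bound on $d\rho$ I use the gauge-change identity $A^{\rho}=\rho^{-1}d\rho+\rho^{-1}A\rho$ rearranged as $d\rho=\rho A^{\rho}-A\rho$; the compactness of $G$ gives a universal $L^{\infty}$ bound $|\rho|\le C_{G}$, whence pointwise $|d\rho|\le C_{G}(|A^{\rho}|+|A|)$, which integrated in $L^{n}$ gives the second stated estimate. The main obstacle is the closedness step: one needs the Hodge/Gaffney estimate on $B_{1}$ to have precisely the borderline exponent $n/2\to n$, since any loss (for instance a log-factor) would prevent the absorption of $\|A^{\rho}\|_{L^{n}}^{2}$ and defeat the improvement of the constant on which the continuity argument hinges.
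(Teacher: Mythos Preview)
Your proposal is correct and follows exactly the approach the paper points to: the paper's own proof simply declares the technique ``completely standard'' (citing Uhlenbeck \cite{UhlenbeckGaugefixing}), verifies scale invariance via the same rescaling $\tilde A(x)=rA(rx)$ you use, and derives the $d\rho$ estimate from the identity $d\rho=\rho A^{\rho}-A\rho$ precisely as you do. Your outline of the continuity method (openness via the implicit function theorem, closedness via the Gaffney estimate and absorption of the quadratic term using $W^{1,n/2}\hookrightarrow L^{n}$) is the standard argument the paper is invoking.
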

\begin{proof}
	The technique of the proof is by now completely standard and goes back to Uhlenbeck \cite{UhlenbeckGaugefixing}. We stated the theorem for a ball of radius $r$ to emphasize the scale invariance. Indeed, if $A: B_{r}(0) \rightarrow \Lambda^{1}\mathbb{R}^{n}\otimes \mathfrak{g}$ is a connection, the rescaled connection $\tilde{A}(x):= r A(rx)$ is a connection on $B_{1}(0)$ with curvature $F_{\tilde{A}}(x) = r^{2} F_{A}(rx)$ and one can easily check the identities
	\begin{align*}
	\left\lVert \tilde{A} \right\rVert_{L^{n}\left(B_{1}(0)\right)} = \left\lVert A \right\rVert_{L^{n}\left(B_{r}(0)\right)} \quad \text{ and } \quad \left\lVert F_{\tilde{A}} \right\rVert_{L^{\frac{n}{2}}\left(B_{1}(0)\right)} = \left\lVert F_{A} \right\rVert_{L^{\frac{n}{2}}\left(B_{r}(0)\right)} .
	\end{align*} The translation invariance is of course obvious. The last estimate for the gauges are usually not stated explicitly, but follows rather easily from the identity
	$ d\rho = \rho A^{\rho} - A \rho .$ 
\end{proof}
\begin{remark}
	We stated the result for $\mathcal{U}^{1,n}$ connections and for Euclidean balls. If instead $A \in \mathcal{U}^{2,\frac{n}{2}},$ then similar arguments show that under the hypotheses of the theorem, there exists $\rho \in W^{k,p}\left(B_{r}(x_{0}) ; G \right)$ such that $d^{\ast} A^{\rho} = 0 $ in $B_{r}(x_{0})$, $\iota^{\ast}_{\partial B_{r}(x_{0}) } \left( \ast A^{\rho}\right)  = 0 $ on $ \partial B_{r}(x_{0})$ and $A^{\rho} \in \mathcal{U}^{2,\frac{n}{2}}$ and we have the estimates 
	\begin{equation*}
	\left\lVert \nabla A^{\rho}\right\rVert_{L^{\frac{n}{2}}} + \left\lVert  A^{\rho}\right\rVert_{L^{n}}  \leq C_{Coulomb}\left\lVert F_{A}\right\rVert_{L^{\frac{n}{2}}} 
	\end{equation*}	
	and 
	\begin{equation*}
	\left\lVert  d\rho\right\rVert_{W^{1,\frac{n}{2}}} \\ \leq C_{G}\left( C_{Coulomb}\left\lVert F_{A}\right\rVert_{L^{\frac{n}{2}}}  + \left\lVert  A\right\rVert_{W^{1,\frac{n}{2}}} \right).
	\end{equation*}	Both results extend to small geodesic balls on closed Riemannian manifolds. 
\end{remark}
\subsection{Regularity of Coulomb bundles}
Now we prove that in the critical dimension, a bundle in which a Sobolev connection is in the Coulomb gauge is actually a H\"{o}lder continuous bundle. 
\begin{theorem}[H\"{o}lder continuity of Coulomb bundles]\label{regularity Coulomb}
	Let $kp=n.$ Let $P$ be a $W^{k,p}$ principal $G$-bundle and $A \in \mathcal{U}^{k,p}\left( P \right)$ be connection on $P$ which is Coulomb, then $P$ is a $W^{2,q}\cap C^{0,\alpha}$-bundle for any $\frac{n}{2} < q < n$ and $\alpha < 1.$ More precisely, if $\left( P, A \right) = \left( \left\lbrace U_{\alpha}\right\rbrace_{\alpha \in I}, \left\lbrace g_{\alpha\beta} \right\rbrace_{\alpha, \beta \in I}, \left\lbrace A_{\alpha}\right\rbrace_{\alpha \in I} \right) $ such that $d^{\ast}A_{\alpha} = 0 $ in $ U_{\alpha}$ for every $\alpha \in I,$ then there exists a good refinement $\left\lbrace V_{j}\right\rbrace_{j \in J}$ of $\left\lbrace U_{\alpha}\right\rbrace_{\alpha \in I}$ with $V_{j} \subset \subset U_{\phi (j)}$ for every $j \in J,$ where $\phi: J \rightarrow I$ is the refinement map,  such that we have  $g_{\phi(i)\phi(j)} \in  W^{2,q}\left(V_{i}\cap V_{j}; G\right)$ for any $ \frac{n}{2} < q < n$ and thus also  $C^{0,\alpha}\left(\overline{V_{i}\cap V_{j}}; G\right)$ for any $\alpha < 1,$ for all $i,j \in J,$ whenever $V_{i}\cap V_{j} \neq \emptyset$.
\end{theorem}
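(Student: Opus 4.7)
The plan is to derive an elliptic system for each transition map $g_{\alpha\beta}$ directly from the gluing relation and the two Coulomb conditions, and then invoke the critical elliptic estimate (Lemma \ref{ellipticCritical}) after passing to a sufficiently fine refinement of the cover.

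First I would rewrite the gluing relation \eqref{gluing relation def} in the form
$$ dg_{\alpha\beta} = g_{\alpha\beta} A_{\beta} - A_{\alpha} g_{\alpha\beta} \qquad \text{in } U_{\alpha}\cap U_{\beta}, $$
and then apply the codifferential. Using $d^{\ast}(gA) = g\, d^{\ast}A - \nabla g \cdot A$ and $d^{\ast}(Ag) = (d^{\ast}A)g - A\cdot\nabla g$ in a Euclidean chart, the Coulomb hypotheses $d^{\ast}A_{\alpha} = d^{\ast}A_{\beta} = 0$ annihilate every zeroth-order contribution and leave
$$ \Delta g_{\alpha\beta} = \nabla g_{\alpha\beta}\cdot A_{\beta} - A_{\alpha}\cdot\nabla g_{\alpha\beta} \qquad \text{in } U_{\alpha}\cap U_{\beta}. $$
Via an embedding $G \hookrightarrow O(N) \subset \mathbb{R}^{N\times N}$, I would regard $g_{\alpha\beta}$ as an $\mathbb{R}^{N^{2}}$-valued map; the right hand side is linear in $\nabla g_{\alpha\beta}$ with matrix coefficient $\mathcal{A}$ obeying $|\mathcal{A}| \lesssim |A_{\alpha}| + |A_{\beta}|$, so $\mathcal{A} \in L^{n}$.

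Next, since each $A_{\alpha}$ lies in $L^{n}(U_{\alpha})$, absolute continuity of the integral lets me choose a good refinement $\{V_{j}\}_{j\in J}$ of $\{U_{\alpha}\}_{\alpha \in I}$ whose elements are so small that $\|A_{\phi(j)}\|_{L^{n}(V_{j}')}$ is strictly below the smallness threshold $\varepsilon_{\Delta_{Cr}}$ of Lemma \ref{ellipticCritical}, where $V_{j}' \supset\supset V_{j}$ is a slight thickening still compactly contained in $U_{\phi(j)}$; the scale-invariance of $\varepsilon_{\Delta_{Cr}}$ means this shrinking costs nothing. Because $kp=n$ forces $g_{\alpha\beta} \in W^{1,n}_{\mathrm{loc}}$ (directly for $k=1$, by Sobolev embedding for $k=2$) and $g_{\alpha\beta}$ is essentially bounded as it takes values in the compact group $G$, it sits in $W^{1,2}_{\mathrm{loc}}$, which is the initial regularity required by Lemma \ref{ellipticCritical}. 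Applying that lemma on $V_{i}'\cap V_{j}'$ with compact subset $K = \overline{V_{i}\cap V_{j}}$, source $f \equiv 0$, and arbitrary exponent $q \in (n/2, n)$ (the constraint $q \geq 2n/(n+2)$ is automatic once $n \geq 2$) yields $g_{\phi(i)\phi(j)} \in W^{2,q}(V_{i}\cap V_{j}; G)$ for every such $q$.

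Morrey's embedding $W^{2,q} \hookrightarrow C^{0, 2 - n/q}$ then provides Hölder regularity with any exponent $\alpha < 1$ by letting $q \uparrow n$. The main technical obstacle is carrying out the codifferential computation on the matrix products $g_{\alpha\beta} A_{\beta}$ and $A_{\alpha} g_{\alpha\beta}$ cleanly enough that \emph{both} Coulomb conditions are genuinely used to annihilate every non-derivative contribution, and then repackaging the resulting left/right multiplications as a single linear action on the entries of $\nabla g_{\alpha\beta}$ whose $L^{n}$ norm is controlled by $\|A_{\alpha}\|_{L^{n}} + \|A_{\beta}\|_{L^{n}}$. Once that algebraic reduction is in place, the cover-shrinking step and the appeal to Lemma \ref{ellipticCritical} are essentially mechanical.
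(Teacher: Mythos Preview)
Your proposal is correct and follows essentially the same route as the paper: derive the elliptic equation for $g_{\alpha\beta}$ from the gluing relation together with the two Coulomb conditions, shrink the cover so that $\|A\|_{L^{n}}$ falls below the threshold of Lemma~\ref{ellipticCritical}, apply that lemma with $f=0$, and finish by Sobolev embedding. The only differences are cosmetic---the paper writes the equation in Hodge-star notation as $-\Delta h_{ij} = \ast[dh_{ij}\wedge \ast A_{j}] + \ast[\ast A_{i}\wedge dh_{ij}]$ rather than in coordinate form---and you are slightly more explicit about the $W^{1,2}$ initial regularity and the matrix embedding of $G$.
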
 
\begin{proof}
	We choose a good refinement $\left\lbrace V_{j}\right\rbrace_{j \in J}$ of $\left\lbrace U_{\alpha}\right\rbrace_{\alpha \in I}$ in such a way that there is an enlarged cover $\left\lbrace V^{'}_{j}\right\rbrace_{j \in J}$ which is also a refinement of $\left\lbrace U_{\alpha}\right\rbrace_{\alpha \in I}$ with the same refinement map $\phi: J \rightarrow I$ and we have 
	\begin{align}
	\left\lVert A_{j}\right\rVert_{L^{n}\left( V^{'}_{j};\Lambda^{1} \mathbb{R}^{n}\otimes \mathfrak{g} \right)} &<\frac{\varepsilon_{\Delta_{Cr}}}{4}  \label{LnsmallnessCoulomb}
	\end{align} for every $ j \in J,$ where $A_{j}:= A_{\phi(j)}|_{V^{'}_{j}},$ and 
	 we have $$ V_{j} \subset\subset V^{'}_{j} \subset U_{\phi(j)} \quad \text{ for every } j \in J, \qquad \bigcup\limits_{j \in J} V_{j} = \bigcup\limits_{\alpha \in I} U_{\alpha} = M^{n} .$$ 
	 Now, setting $h_{ij}= g_{\phi(i)\phi(j)}$ for every $i,j \in J$ such that $V^{'}_{i}\cap V^{'}_{j} \neq \emptyset,$ we have the gluing relations,  
	 \begin{equation}\label{gluing identityCoulomb}
	 A_{j} = h_{ij}^{-1}dh_{ij} + h_{ij}^{-1}A_{i}h_{ij} \qquad \text{ for a.e } x \text{ in } V^{'}_{i}\cap V^{'}_{j} \text{ whenever }V^{'}_{i}\cap V^{'}_{j} \neq \emptyset.
	 \end{equation}
	 Rewriting \eqref{gluing identityCoulomb}, we have, 
	 $$ dh_{ij} = h_{ij}A_{j} -  A_{i} h_{ij} \qquad \text{ for a.e } x \text{ in } V^{'}_{ij} \text{ whenever }V^{'}_{ij} \neq \emptyset .$$
	 Also, since $A$ is in Coulomb gauge, we have  $$d^{\ast}A_{i} = 0 = d^{\ast}A_{j}  \qquad \text{ in } V^{'}_{ij} .$$
	 This implies, 
	 \begin{equation}
	 - \Delta h_{ij} = \ast \left[ dh_{ij} \wedge \left( \ast A_{j}\right)\right] + \ast \left[  \left( \ast A_{i}\right) \wedge dh_{ij} \right]   \qquad \text{ in } V^{'}_{ij} .
	 \end{equation}
	 This is of the same form as \eqref{critical elliptic eqn} with $f=0.$
	 Now, we have, $$ \left\lVert A_{l} \right\rVert_{L^{n}} \leq \frac{1}{4}\varepsilon_{\Delta_{Cr}} \qquad \text{ for any } l \in J. $$
	 Thus, we can apply lemma \ref{ellipticCritical} with $f =0$ and deduce that $h_{ij} \in W^{2,p}_{loc}\left( V^{'}_{ij}; G \right)$ and thus $h_{ij} \in W^{2,q}\left( V_{ij}; G \right),$ for \emph{any} $\frac{n}{2} < q < n.$ Sobolev embedding now proves the H\"{o}lder continuity of $h_{ij}$ in $V_{ij}.$ This proves the result.
\end{proof}
\subsection{Smooth approximation theorems: critical dimension}
\begin{theorem}[Smooth approximation of Sobolev bundles with Sobolev connection: critical case]\label{smoothaapproxcriticalconn}
	Let $kp=n.$ Given any $P \in \mathcal{P}_{G}^{k,p}\left(M^{n}\right)$, $A \in \mathcal{U}^{k,p}\left( P \right)$ and any $\varepsilon > 0,$ there is a smooth principal $G$-bundle $P^{\varepsilon} \in \mathcal{P}_{G}^{\infty}\left(M^{n}\right)$ with a smooth connection $A^{\varepsilon} \in \mathcal{A}^{\infty}\left( P^{\varepsilon} \right)$ such that $P^{\varepsilon}$ is $\varepsilon$-close to $P$ in the $W^{k,p}$ norm, $P^{\varepsilon} \stackrel{W^{k,p}}{\simeq_{\rho}} P $ and $A^{\varepsilon}$ is $\varepsilon$-close to the pullback of $A$ on $P^{\varepsilon}$ in $\mathcal{U}^{k,p}$ norm. \smallskip 
	
	\noindent More precisely, if $\left( P, A \right) = \left( \left\lbrace U_{\alpha}\right\rbrace_{\alpha \in I}, \left\lbrace g_{\alpha\beta} \right\rbrace_{\alpha, \beta \in I}, \left\lbrace A_{\alpha}\right\rbrace_{\alpha \in I} \right), $ then there exists a refinement $\left\lbrace V_{j}\right\rbrace_{j \in J}$ such that there exists 
	\textbf{smooth} Lie-algebra valued $1$-forms $A^{\varepsilon}_{j} \in C^{\infty}\left(V_{j}; \Lambda^{1}\mathbb{R}^{n}\otimes \mathfrak{g} \right)$ and \textbf{smooth} transition maps $h_{ij}\in C^{\infty}\left(V_{i}\cap V_{j}; G\right)  $ for all $i,j \in J,$ whenever the intersection is non-empty, satisfying 
	\begin{itemize}
		\item[(i)] $h_{ij}h_{jk} = h_{ik} \quad \text{ for a.e. } x \in V_{ijk} \text{ whenever } V_{ijk} \neq \emptyset, $
		\item[(ii)] $A^{\varepsilon}_{j} = h^{-1}_{ij}dh_{ij} + h^{-1}_{ij}A^{\varepsilon}_{i}h_{ij} \qquad \text{ for all } i,j \in J \text{ with } V_{ij} \neq \emptyset,$
		\item[(iii)] $ \left\lVert h_{ij} - g_{\phi(i)\phi(j)}\right\rVert_{W^{k,p}\left( V_{ij};G\right)} \leq \varepsilon \text{ whenever } V_{ijk} \neq \emptyset,$ 
		\item[(iv)] For each $j \in J,$ there exists maps $\rho_{j} \in W^{k,p} \left(  V_{j}; G \right) $ such that $$h_{ij} = \rho_{i}^{-1} g_{\phi(i)\phi(j)} \rho_{j}  \qquad \text{ for a.e. } x \in V_{ij} \text{ whenever } V_{ij} \neq \emptyset.$$ 
		\item[(v)] $ \left\lVert A^{\varepsilon}_{j} - A^{\rho_{j}}_{\phi(j)}\right\rVert_{\mathcal{U}^{k,p}\left( V_{j}; \Lambda^{1}\mathbb{R}^{n}\otimes \mathfrak{g} \right)} \leq \varepsilon,$  for every $j \in J,$ where $\phi: J \rightarrow I$ is the refinement map.  
	\end{itemize}
\end{theorem}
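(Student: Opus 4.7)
The plan is three-fold: first place $A$ in Coulomb gauges on a sufficiently fine cover; second use Theorem~\ref{regularity Coulomb} to upgrade the resulting Coulomb transition maps to subcritical regularity and then invoke Theorem~\ref{smoothingC0bundle}; and third separately mollify the Coulomb gauges and the pulled-back connection on the resulting smooth bundle. The role of the regularity theorem is essential: it is precisely what brings the critical problem into the reach of the subcritical smoothing result.

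Fix a good refinement $\{V_j\}_{j\in J}\subset\subset\{V'_j\}_{j\in J}$ of $\{U_\alpha\}$ with refinement map $\phi\colon J\to I$, fine enough that on each enlarged patch $V'_j$ the quantities $\|F_A\|_{L^{n/2}(V'_j)}$ and $\|A_{\phi(j)}\|_{L^n(V'_j)}$ fall below the smallness thresholds of Lemmas~\ref{Coulombgauge} and~\ref{ellipticCritical}; this is possible by absolute continuity and the scale invariance of $\varepsilon_{Coulomb}$ and $\varepsilon_{\Delta_{Cr}}$. By Lemma~\ref{Coulombgauge} there exist $\tilde\rho_j\in W^{k,p}(V'_j;G)$ such that $\tilde A_j:=A^{\tilde\rho_j}_{\phi(j)}$ is divergence-free. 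The induced transition maps $\tilde h_{ij}:=\tilde\rho_i^{-1}g_{\phi(i)\phi(j)}\tilde\rho_j$ then satisfy the hypotheses of Theorem~\ref{regularity Coulomb} and so lie in $W^{2,q}\cap C^{0,\alpha}(V_{ij};G)$ for any $n/2<q<n$ and any $\alpha<1$. The Coulomb bundle $\tilde P=(\{V_j\},\{\tilde h_{ij}\})$ is therefore subcritical.

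Apply Theorem~\ref{smoothingC0bundle} to $\tilde P$ in its $W^{2,q}$ form (for some fixed $q>n/2$), passing to a further refinement and obtaining \textbf{smooth} transition maps $\hat h_{ij}$ together with continuous equivalence maps $\sigma_j$ close to $\mathbf{1}_G$, such that $\hat h_{ij}=\sigma_i^{-1}\tilde h_{ij}\sigma_j$ and $\|\hat h_{ij}-\tilde h_{ij}\|_{W^{2,q}}\le\varepsilon/3$; the Sobolev embedding $W^{2,q}\hookrightarrow W^{k,p}$ for $2q>n=kp$ then transfers this bound into the $W^{k,p}$ norm. Independently, mollify each $\tilde\rho_j$ into a smooth $\hat\rho_j\in C^\infty(V_j;G)$ by standard convolution followed by the nearest-point retraction onto $G$ (available by compactness), so that $\|\hat\rho_j-\tilde\rho_j\|_{W^{k,p}(V_j;G)}\le\varepsilon/3$. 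Define the final smooth transitions $h_{ij}:=\hat\rho_i\hat h_{ij}\hat\rho_j^{-1}$ and equivalence maps $\rho_j:=\tilde\rho_j\sigma_j\hat\rho_j^{-1}$. Conclusions (i) and (iv) are then immediate from the cocycle identity for $\hat h_{ij}$ and the definitions, while the telescoping identity
\begin{align*}
h_{ij}-g_{\phi(i)\phi(j)} &= \hat\rho_i(\hat h_{ij}-\tilde h_{ij})\hat\rho_j^{-1} + (\hat\rho_i-\tilde\rho_i)\tilde h_{ij}\hat\rho_j^{-1} + \tilde\rho_i\tilde h_{ij}(\hat\rho_j^{-1}-\tilde\rho_j^{-1})
\end{align*}
together with the Banach-algebra property of $W^{k,p}\cap L^\infty$ at the critical exponent $kp=n$ yields (iii).

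For the connection, the pulled-back local forms $A^{\rho_j}_{\phi(j)}$ define a $\mathcal{U}^{k,p}$ connection on the smooth bundle $P^\varepsilon:=(\{V_j\},\{h_{ij}\})$ compatible with the transitions $h_{ij}$. Fix any smooth reference connection $A^0$ on $P^\varepsilon$ (available by smoothness of $P^\varepsilon$ and a partition of unity). The difference $A^\rho-A^0$ is then a global $\mathcal{U}^{k,p}$ section of the smooth vector bundle $\Lambda^1T^\ast M^n\otimes\mathrm{Ad}(P^\varepsilon)$, which I mollify via a partition of unity and smooth local trivializations to a smooth section $B^\varepsilon$ that is $\varepsilon$-close in $\mathcal{U}^{k,p}$; setting $A^\varepsilon:=A^0+B^\varepsilon$ produces the smooth connection required by (ii) and (v). The main obstacle is the simultaneous bookkeeping of the three independent smoothings: the smallness parameters in the subcritical smoothing of $\hat h_{ij}$, in the mollification of $\hat\rho_j$, and in that of $B^\varepsilon$ must be balanced so that all $W^{k,p}$ and $\mathcal{U}^{k,p}$ estimates survive the gauge changes and multiplicative operations. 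This is precisely where the Banach-algebra property of $W^{k,p}\cap L^\infty$ at $kp=n$, together with the scale invariance of the Coulomb and elliptic smallness thresholds, is indispensable.
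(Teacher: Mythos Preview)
Your proof follows the same three-step architecture as the paper's: pass to local Coulomb gauges, invoke the H\"older regularity of the resulting transitions (Theorem~\ref{regularity Coulomb}) to reach the subcritical regime, and then appeal to Theorem~\ref{smoothingC0bundle}. The two arguments diverge only in bookkeeping. First, your additional step of mollifying the Coulomb gauges $\tilde\rho_j\mapsto\hat\rho_j$ and conjugating the smoothed transitions back by them is unnecessary: the paper instead observes directly (its Step~2) that on a sufficiently fine cover the Coulomb gauges already satisfy $\|d\tilde\rho_j\|_{L^n}\lesssim\|F_A\|_{L^{n/2}}+\|A_j\|_{L^n}$, which can be made small, whence $\|\tilde h_{ij}-g_{\phi(i)\phi(j)}\|_{W^{k,p}}\le\varepsilon/2$; the smooth $\hat h_{ij}$ produced by Theorem~\ref{smoothingC0bundle} are then already $\varepsilon$-close to the original $g_{\phi(i)\phi(j)}$ by the triangle inequality, no further conjugation needed. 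Second, for the connection the paper writes down the explicit partition-of-unity averaging $B_j=\sum_l\psi_l\bigl[(g^{\varepsilon}_{lj})^{-1}dg^{\varepsilon}_{lj}+(g^{\varepsilon}_{lj})^{-1}\tilde A^\varepsilon_l\,g^{\varepsilon}_{lj}\bigr]$ and verifies the gluing relation by hand; your reference-connection trick ($A^\varepsilon=A^0+B^\varepsilon$ with $B^\varepsilon$ a mollified global $\mathrm{Ad}(P^\varepsilon)$-valued one-form) is the abstract version of the same construction and is equally valid. Your route trades the paper's explicit constants for a cleaner conceptual picture, at the price of relying implicitly on the density of $C^\infty(B;G)$ in $W^{1,n}(B;G)$ (via the VMO embedding and nearest-point retraction) for the $\hat\rho_j$ step---a nontrivial fact which the paper's argument sidesteps entirely.
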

\begin{remark}
	(i) simply means that the cocycle data $\left( \left\lbrace V_{j}\right\rbrace_{j \in J}, \left\lbrace h_{ij} \right\rbrace_{i,j \in J} \right)$ defines a smooth bundle $P^{\varepsilon}\in \mathcal{P}_{G}^{\infty}\left(M^{n}\right) $. Conclusion (ii) is the requirement that the local representatives $A^{\varepsilon}_{j}$ actually defines a global connection form $A^{\varepsilon}$ on the bundle $P^{\varepsilon}.$ Conclusion (iii) and (iv), respectively, expresses the fact that $P$ and $P^{\varepsilon}$ are $\varepsilon$-close in the $W^{k,p}$ norm and are $W^{k,p}$-equivalent, i.e $P^{\varepsilon} \stackrel{W^{k,p}}{\simeq_{\rho}} P .$ Conclusion (v) means that on $P^{\varepsilon},$ the connection $A^{\varepsilon}$ is $\varepsilon$-close in the $\mathcal{U}^{k,p}$ norm to the pullback connection $\left(\rho\right)^{\ast}A = A^{\rho},$ obtained by pulling back the connection $A$ from $P$ to $P^{\varepsilon},$ by the bundle equivalence map $\rho.$   
\end{remark}
\begin{remark}\label{closeness to A vs pullback of A}
	One can also show the estimates \begin{equation*}
	\left\lVert A^{\varepsilon}_{j} - A_{\phi(j)}\right\rVert_{\mathcal{U}^{k,p}\left( V_{j}; \Lambda^{1}\mathbb{R}^{n}\otimes \mathfrak{g} \right)} \leq \varepsilon, \quad   \text{ for every }j \in J.
	\end{equation*}
	But these local estimates are somewhat meaningless since the collection of local $\mathfrak{g}$-valued $1$-forms $A_{\phi(j)}:V_{j} \rightarrow  \Lambda^{1}\mathbb{R}^{n}\otimes \mathfrak{g} ,$ $j \in J,$ \textbf{does not} in general define a \emph{connection form} on the bundle $P^{\varepsilon}.$ In other words, they \textbf{are not} in general the local expressions for a global $\mathfrak{g}$-valued $1$-form on $M^{n}$ in the bundle co-ordinates of $P^{\varepsilon}.$ The local representatives for the \emph{global} form $A: M^{n}\rightarrow \Lambda^{1}T^{\ast}M^{n}\otimes \mathfrak{g}$ in the bundle coordinates of $P^{\varepsilon}$ are precisely the forms $A^{\rho_{j}}_{\phi(j)}:V_{j} \rightarrow  \Lambda^{1}\mathbb{R}^{n}\otimes \mathfrak{g}, j \in J.$   
\end{remark}
\begin{proof}
	We prove only the case $k=1.$ The case $k = 2$ adds no essential new difficulties. We fix a representation  $ P = \left( \left\lbrace U_{\alpha}\right\rbrace_{\alpha \in I}, \left\lbrace g_{\alpha\beta} \right\rbrace_{\alpha, \beta \in I} \right) $ of our $W^{1,n}$ principal $G$-bundle and also assume that the connection form $A\in \mathcal{U}^{1,n}\left(P\right)$ is given by the local representatives $\left\lbrace A_{\alpha} \right\rbrace_{\alpha \in I},$ i.e. we have 
	\begin{align}
	A_{\alpha} \in L^{n}\left( U_{\alpha}; \Lambda^{1} \mathbb{R}^{n}\otimes \mathfrak{g}\right) \text{ and } dA_{\alpha} \in L^{\frac{n}{2}}\left( U_{\alpha}; \Lambda^{2} \mathbb{R}^{n}\otimes \mathfrak{g}\right),
	\end{align}
	and the gluing relations 
	\begin{equation}\label{gluing proof}
	A_{\beta} = g_{\alpha\beta}^{-1} d g_{\alpha\beta} + g_{\alpha\beta}^{-1} A_{\alpha} g_{\alpha\beta} \quad \text{ a.e. in } U_{\alpha} \cap U_{\beta}
	\end{equation} 
	holds whenever $U_{\alpha} \cap U_{\beta} \neq \emptyset.$ We divide the proof into several steps.\smallskip   
	
	\noindent \textbf{Step 1: Putting the connection in local Coulomb gauges} We choose a refinement $\left\lbrace V_{j}\right\rbrace_{j \in J}$ by small geodesic balls with the refinement map $\phi$ such that for every $ j \in J,$ we have 
	\begin{align}
	\left\lVert F_{A_{j}}\right\rVert_{L^{\frac{n}{2}}\left( V_{j};\Lambda^{2}\mathbb{R}^{n}\otimes \mathfrak{g} \right)} &< \min\left\lbrace \frac{\varepsilon_{Coulomb}}{16}, \frac{\varepsilon_{\Delta_{Cr}}}{64C_{Coulomb}}, \frac{\varepsilon}{64C_{Coulomb}C^{6}_{G}}   \right\rbrace \label{coulombsmallness}, \\ \left\lVert A_{j}\right\rVert_{L^{n}\left( V_{j};\Lambda^{1}\mathbb{R}^{n}\otimes \mathfrak{g} \right)} &<\frac{\varepsilon}{64C^{6}_{G}},  \label{Lnsmallness}
	\end{align}  where $A_{j}:= A_{\phi(j)}|_{V^{'}_{j}},$ and for every $ i, j \in J$ with $V_{ij}\neq \emptyset,$ 
	\begin{align}
	\left\lVert dg_{\phi(i)\phi(j)}\right\rVert_{L^{n}\left( V_{ij};\Lambda^{1}\mathbb{R}^{n}\otimes \mathfrak{g} \right)}, \left\lVert g_{\phi(i)\phi(j)}\right\rVert_{L^{n}\left( V_{ij}; G \right)} <\frac{\varepsilon}{64C^{6}_{G}}.   \label{gLnnormsmallness}
	\end{align} By lemma \ref{Coulombgauge}, applied to the small geodesic balls $V_{j},$ for each $j \in J, $ there exist maps $\rho_{j}: V_{j} \rightarrow G$ such that 
	 $d^{\ast} A_{j}^{\rho_{j}} = 0$  in $V_{j},$ $ \iota^{\ast}_{\partial V_{j}} \left( \ast A_{j}^{\rho_{j}}\right)  = 0$  on $\partial V_{j},$ and we have the estimates ( all norms on $V_{j}$), 
		\begin{align}\label{Coulombestimate}
		\left\lVert \nabla A_{j}^{\rho_{j}}\right\rVert_{L^{\frac{n}{2}}} + \left\lVert  A_{j}^{\rho_{j}}\right\rVert_{L^{n}}  &\leq C_{Coulomb}\left\lVert F_{A_{j}}\right\rVert_{L^{\frac{n}{2}}},\\
		\label{gauge estimate} 
		\left\lVert d\rho_{j}\right\rVert_{L^{n}}  &\leq C_{G}\left( C_{Coulomb}\left\lVert F_{A_{j}}\right\rVert_{L^{\frac{n}{2}}}  + \left\lVert A_{j}\right\rVert_{L^{n}}\right).
		\end{align}	
	
	\noindent \textbf{Step 2: Gluing local Coulomb gauges} Now we wish to show that the data $\left( \left\lbrace V_{j}\right\rbrace_{j \in J}, \left\lbrace h_{ij} \right\rbrace_{i, j \in J} \right):= P^{\varepsilon}_{A_{Coulomb}} $ defines a $W^{2,q}$ bundle for some $\frac{n}{2}< q < n,$ which is $\varepsilon$-close to $P$ in the $W^{1,n}$ norm and  on which the pullback of the original connection $A$ defines a $W^{1,\frac{n}{2}}$ connection in the Coulomb gauge, where \begin{align}\label{transferring to coulomb gauge}
	h_{ij} : = \rho_{i}^{-1}g_{\phi(i)\phi(j)}\rho_{j}:V^{'}_{ij} \rightarrow G \quad \text{ for every }i,j \in J \text{ with } V^{'}_{ij} \neq \emptyset.\end{align} It is easy to check that $h_{ij}$ are $W^{1,n}$ cocycles,  proving $P^{\varepsilon}_{A_{Coulomb}}\in \mathcal{P}_{G}^{1,n}\left(M^{n}\right).$ Clearly,  $A^{\rho}$ is a $W^{1,\frac{n}{2}}$ connection on $P^{\varepsilon}_{A_{Coulomb}}$ which is Coulomb.  Thus $W^{2,q}$ regularity of $P^{\varepsilon}_{A_{Coulomb}}$ follows by Theorem \ref{regularity Coulomb}.  Simple computation yields 
	\begin{align*}
	dh_{ij} - dg_{\phi(i)\phi(j)} =\begin{multlined}[t]
	d\rho^{-1}_{i}g_{\phi(i)\phi(j)}\rho_{j} + \rho_{i}^{-1} g_{\phi(i)\phi(j)}d\rho_{j} + \left(\rho_{i}^{-1} - \mathbf{1}_{G} \right) dg_{\phi(i)\phi(j)}\rho_{j} \\ +  dg_{\phi(i)\phi(j)}\left( \rho_{j} - \mathbf{1}_{G} \right)
	\end{multlined} 
	\end{align*}
	Hence, using \eqref{gauge estimate} and our choice in \eqref{coulombsmallness} and \eqref{Lnsmallness} and \eqref{gLnnormsmallness},
	\begin{align*}
	\left\lVert dh_{ij} - dg_{\phi(i)\phi(j)}\right\rVert_{L^{n}} \leq C^{2}_{G}\left( C^{2}_{G}\left\lVert d\rho_{i}\right\rVert_{L^{n}} + \left\lVert d\rho_{j}\right\rVert_{L^{n}}  + 4\left\lVert dg_{\phi(i)\phi(j)}\right\rVert_{L^{n}}\right) \leq \frac{\varepsilon}{4}.
	\end{align*}
	We also have, by \eqref{gLnnormsmallness},  
	\begin{align*}
	\left\lVert h_{ij} - g_{\phi(i)\phi(j)}\right\rVert_{L^{n}} =\left\lVert \left(\rho_{i}^{-1} - \mathbf{1}_{G} \right) g_{\phi(i)\phi(j)}\rho_{j}  +  g_{\phi(i)\phi(j)}\left( \rho_{j} - \mathbf{1}_{G} \right)\right\rVert_{L^{n}} \leq \frac{\varepsilon}{4}.
	\end{align*}
	Combing, we obtain the estimate 
	\begin{align}\label{Wkpclose}
	\left\lVert h_{ij} - g_{\phi(i)\phi(j)}\right\rVert_{W^{1,n}\left( V_{ij};G\right)} \leq \frac{\varepsilon}{2} \qquad \text{ whenever } V_{ij} \neq \emptyset.
	\end{align}
	Note that \eqref{Wkpclose} together with \eqref{gLnnormsmallness} implies 
	\begin{align}\label{Cobundlegaugesmallness}
	\left\lVert dh_{ij}\right\rVert_{L^{n}\left( V_{ij}; \Lambda^{1}\mathbb{R}^{n}\otimes\mathfrak{g}\right)} \leq \varepsilon \qquad \text{ for every } i,j \in J \text{ with } V_{ij} \neq \emptyset.
	\end{align}\smallskip   
	
	\noindent \textbf{Step 3: Approximation of connection} We pick an exponent $\frac{n}{2} < q < n.$ By step 2, we can assume, without loss of generality,  that we started with a $W^{1,\frac{n}{2}}$ connection $A$ in the Coulomb gauge  on a $W^{2,q}\cap C^{0}$-bundle $P = \left( \left\lbrace U_{\alpha}\right\rbrace_{\alpha \in I},\left\lbrace g_{\alpha\beta}\right\rbrace_{\alpha, \beta \in I} \right)$ and in view of \eqref{coulombsmallness}, \eqref{Coulombestimate} and \eqref{Cobundlegaugesmallness}, we have, 	\begin{align}
	&\left\lVert \nabla A_{\alpha}\right\rVert_{L^{\frac{n}{2}}\left( U_{\alpha};\mathbb{R}^{n \times n}\otimes \mathfrak{g} \right)} + \left\lVert  A_{\alpha}\right\rVert_{L^{n}\left( U_{\alpha} ;\Lambda^{1}\mathbb{R}^{n}\otimes \mathfrak{g} \right)} \leq \frac{\varepsilon}{64C^{6}_{G}} \quad \text{ for every } \alpha \in I, \label{smallnessofconnnormoriginal}\\
		 \label{newbundleguagesmall}&\left\lVert  dg_{\alpha\beta}\right\rVert_{L^{n}\left( U_{\alpha\beta} ;\Lambda^{1}\mathbb{R}^{n}\otimes \mathfrak{g} \right)} \leq \varepsilon  \quad \text{ for every } \alpha, \beta \in I \text{ with } U_{\alpha\beta} \neq \emptyset. 
	\end{align} By Theorem \ref{smoothingC0bundle}, there exists a smooth bundle $P^{\varepsilon}= \left( \left\lbrace V_{j}\right\rbrace_{j \in J},\left\lbrace g^{\varepsilon}_{ij}\right\rbrace_{i,j \in J} \right)$ which is $W^{2,q}$ equivalent to and $\varepsilon$-close to $P$ in $W^{2,q}$ norm. More precisely, there exists a refinement $\left\lbrace V_{j}\right\rbrace_{j \in J}$ of $\left\lbrace U_{\alpha}\right\rbrace_{\alpha \in I},$  with refinement map $\phi: J \rightarrow I,$  such that there exists \textbf{continuous} maps $\sigma_{j} \in W^{2,q}\left(V_{j}; G\right)$ and 
	\textbf{smooth} transition maps $g^{\varepsilon}_{ij}\in C^{\infty}\left(V_{i}\cap V_{j}; G\right)  $ for all $i,j \in J,$ whenever the intersection is non-empty, satisfying 
	\begin{itemize}
		\item[(i)] $g^{\varepsilon}_{ij}g^{\varepsilon}_{jk} = g^{\varepsilon}_{ik} \quad \text{ for a.e. } x \in V_{ijk} \text{ whenever } V_{ijk} \neq \emptyset, $
		\item[(ii)] $g^{\varepsilon}_{ij} = \sigma_{i}^{-1} g_{\phi(i)\phi(j)} \sigma_{j}  \qquad \text{ for a.e. } x \in V_{ij} \text{ whenever } V_{ij} \neq \emptyset.$ 
		\item[(iii)] $ \left\lVert g^{\varepsilon}_{ij} - g_{\phi(i)\phi(j)}\right\rVert_{W^{2,q}\left( V_{ij};G\right)} \leq \frac{\varepsilon}{64C^{6}_{G}} \text{ whenever } V_{ij} \neq \emptyset,$ 
		\item[(iv)] and, for every $j \in J,$ the estimates 
		\begin{equation}\label{sigma estimate}
		\left\lVert \sigma_{j} - \mathbf{1}_{G}\right\rVert_{L^{\infty}\left( V_{j}; G \right)} \leq \frac{\varepsilon}{64C^{6}_{G}} \quad \text{ and }\quad  \left\lVert d\sigma_{j} \right\rVert_{W^{1,q}\left( V_{j}; \Lambda^{1}\mathbb{R}^{n}\otimes\mathfrak{g} \right)} \leq \frac{\varepsilon}{64C^{6}_{G}}.
		\end{equation} 
		\end{itemize} 
		We note that (iv) and \eqref{newbundleguagesmall} together implies the estimate 
	\begin{align}\label{smoothbundlegaugebound}
	\left\lVert dg^{\varepsilon}_{ij} \right\rVert_{L^{n}\left( V_{ij};\Lambda^{1}\mathbb{R}^{n}\otimes\mathfrak{g}\right)} \leq \varepsilon \quad \text{ for every } i,j \in J \text{ with } V_{ij} \neq \emptyset.
	\end{align}
	Observe that the pullback of $A$ on $P^{\varepsilon}$ is a $W^{1,\frac{n}{2}}$ connection on $P^{\varepsilon}$. Indeed, denoting the local representatives of the pullback by  
	\begin{align}\label{definitionintermediateconn}
	\tilde{A}_{j}: = A_{j}^{\sigma_{j}} = \sigma_{j}^{-1}d\sigma_{j} + \sigma_{j}^{-1}A_{\phi(j)}\sigma_{j} \qquad \text{ in } V_{j} \qquad \text{ for each }j \in J,
	\end{align}
	we infer $\tilde{A}_{j}$ is $W^{1,\frac{n}{2}},$ since $A_{\phi(j)}$ is $W^{1,\frac{n}{2}}$ and $\sigma_{j}$ is $W^{2,q}$ for some $q > \frac{n}{2}.$
	Next we show that there is a smooth connection form $B= \left\lbrace B_{j}\right\rbrace_{j \in J}$ on $P^{\varepsilon}$ which is $\varepsilon$-close to the pullback of our original connection $A$ on $P^{\varepsilon}$ in the  $\mathcal{U}^{1,n}$ norm. Note that approximating $\tilde{A}_{j}$ by smooth forms is easy, but the real point, similar in spirit to Remark \ref{closeness to A vs pullback of A}, is to ensure that the approximating forms $B_{j}$ satisfy the gluing relations 
	\begin{align}\label{verificationgluingconn}
	B_{j}= \left( g_{ij}^{\varepsilon}\right)^{-1}dg^{\varepsilon}_{ij} + \left(g_{ij}^{\varepsilon}\right)^{-1}B_{i}g^{\varepsilon}_{ij} \qquad \text{ in } V_{ij} \text{ whenever } V_{ij} \neq \emptyset.
	\end{align}  We divide the proof into two substeps. \smallskip

	\noindent \textbf{Step 3a: Construction of approximating forms}	We choose a partition of unity $\left\lbrace \psi_{j}\right\rbrace_{j \in J}$ subordinate to the cover  $\left\lbrace V_{j}\right\rbrace_{j \in J}$ such that  we have the bounds 
	\begin{align}\label{partionofunitychoice}
	\left\lVert d\psi_{l}\right\rVert_{L^{\infty}\left( V_{lj} \right)} \leq \frac{C_{part}}{\operatorname{meas}\left(V_{lj}\right)^{\frac{1}{n}}}\qquad \text{ for every } j,l \in J \text{ with } V_{lj} \neq \emptyset. 
	\end{align} where  $C_{part} \geq 1$ is a fixed constant. By density, we can find, for each $j \in J,$ $\tilde{A}^{\varepsilon}_{j} \in C^{\infty}\left( V_{j}; \Lambda^{1}\mathbb{R}^{n}\otimes \mathfrak{g}\right)$  such that 
	\begin{align}\label{smoothingconnapproxchoice}
	\left\lVert \tilde{A}^{\varepsilon}_{j}- \tilde{A}_{j}\right\rVert_{W^{1,\frac{n}{2}}\left( V_{j}; \Lambda^{1}\mathbb{R}^{n}\otimes \mathfrak{g}\right)} \leq \frac{\varepsilon}{64\left( C_{part}N_{J} \right)^{2}C^{6}_{G}}, 
	\end{align}
	where $N_{J}=\#J$ denote the cardinality of the finite index set $J.$  We define 
	\begin{align}\label{definitionapproxconn}
	B_{j}: = \sum_{\substack{l \in J \\ V_{jl} \neq \emptyset}} \psi_{l}\left[ \left( g_{lj}^{\varepsilon}\right)^{-1}dg^{\varepsilon}_{lj} + \left(g_{lj}^{\varepsilon}\right)^{-1}\tilde{A}^{\varepsilon}_{l}g^{\varepsilon}_{lj}\right].
	\end{align}
	Note that the possibility $j=l$ is not excluded. Clearly, $B_{j}$ is smooth for each $j \in J.$ By a straight forward computation using the identity $ dg^{\varepsilon}_{lj} -  dg^{\varepsilon}_{li}g^{\varepsilon}_{ij} = g_{li}^{\varepsilon}dg^{\varepsilon}_{ij},$ obtained by differentiating the cocycle condition $g^{\varepsilon}_{lj}=g_{li}^{\varepsilon} g^{\varepsilon}_{ij},$ we deduce \begin{align*}
	g^{\varepsilon}_{ij}B_{j} - B_{i}g^{\varepsilon}_{ij} =\left(\psi_{i} + \psi_{j}\right)dg^{\varepsilon}_{ij} + \sum_{\substack{l \in J,\\ l\neq i,j,\\V_{ijl} \neq \emptyset }} \psi_{l} dg^{\varepsilon}_{ij}. 
	\end{align*}
	Since $\left\lbrace \psi_{j} \right\rbrace_{j}$ is a partition of unity, this proves prove \eqref{verificationgluingconn}. \smallskip 
	
	\noindent\textbf{Step 3b: Approximation bounds} It only remains to estimate  $\left\lVert dB_{j} - d\tilde{A}_{j} \right\rVert_{L^{\frac{n}{2}}} $ and $\left\lVert B_{j} - \tilde{A}_{j} \right\rVert_{L^{n}}.$ Observe that since $\tilde{A}$ is a connection on the bundle $P^{\varepsilon}$, from the gluing relations and by properties of a partition of unity, we can write  
	\begin{align}\label{Atilde in partition of unity}
	\tilde{A}_{j} = \sum_{\substack{l \in J \\ V_{jl} \neq \emptyset}} \psi_{l} \left( \tilde{A}_{j}\big|_{V_{jl}} \right)  = \sum_{\substack{l \in J \\ V_{jl} \neq \emptyset}} \psi_{l}\left[  \left(g^{\varepsilon}_{lj}\right)^{-1}dg^{\varepsilon}_{lj} + \left(g^{\varepsilon}_{lj}\right)^{-1}\tilde{A}_{l}
	g^{\varepsilon}_{lj}\right].
	\end{align} 
	Subtracting \eqref{Atilde in partition of unity} from \eqref{definitionapproxconn}, we can estimate $\left\lVert B_{j} - \tilde{A}_{j} \right\rVert_{L^{n}}.$ 
	Next, we compute 
	\begin{align*}
	dB_{j}&- d\tilde{A}_{j} \\&= \sum_{\substack{l \in J \\ V_{jl} \neq \emptyset}} \left[ d\psi_{l}\wedge  \left(g_{lj}^{\varepsilon}\right)^{-1}\left(\tilde{A}^{\varepsilon}_{l} -\tilde{A}_{l} \right)g^{\varepsilon}_{lj} + \psi_{l} d \left( \left(g_{lj}^{\varepsilon}\right)^{-1}\tilde{A}^{\varepsilon}_{l}g^{\varepsilon}_{lj} -\left(g_{lj}^{\varepsilon}\right)^{-1}\tilde{A}_{l}g^{\varepsilon}_{lj} \right)\right].
	\end{align*}
The second term is easy to estimate. For the first, we have, for fixed $l,j \in J,$  
\begin{align*}
\left\lVert d\psi_{l}\wedge  \left(g_{lj}^{\varepsilon}\right)^{-1}\left(\tilde{A}^{\varepsilon}_{l} -\tilde{A}_{l} \right)g^{\varepsilon}_{lj} \right\rVert_{L^{\frac{n}{2}}\left( V_{lj}\right)} &\stackrel{\eqref{partionofunitychoice}}{\leq} \frac{C_{part}}{\operatorname{meas}\left(V_{lj}\right)^{\frac{1}{n}}}\cdot C^{2}_{G} \left\lVert \tilde{A}^{\varepsilon}_{l} -\tilde{A}_{l} \right\rVert_{L^{\frac{n}{2}}\left( V_{lj}\right)} \\
&\stackrel{\text{H\"{o}lder}}{\leq}
C_{part}C^{2}_{G} \left\lVert \tilde{A}^{\varepsilon}_{l} -\tilde{A}_{l} \right\rVert_{L^{n}\left( V_{lj}\right)} \stackrel{\eqref{smoothingconnapproxchoice}}{\leq }\frac{\varepsilon}{64N^{2}_{J}}.
\end{align*}
Summing over $l \in J$ with $V_{jl} \neq \emptyset$ and setting $B= A^{\varepsilon},$ the proof is complete. \end{proof}
\noindent The following result is a consequence, which immediately implies Theorem \ref{cocycle smoothing}.   
\begin{theorem}[Smooth approximation of Sobolev bundles: critical case]\label{smoothaapproxcritical}
	Let $kp=n.$ Given any $P \in \mathcal{P}_{G}^{k,p}\left(M^{n}\right)$ and any $0 < \varepsilon < 1,$ there is a smooth principal $G$-bundle $P^{\varepsilon} \in \mathcal{P}_{G}^{\infty}\left(M^{n}\right)$ which is $\varepsilon$-close to $P$ in the $W^{k,p}$ norm such that $P^{\varepsilon} \stackrel{W^{k,p}}{\simeq} P .$ More precisely, if $P = \left( \left\lbrace U_{\alpha}\right\rbrace_{\alpha \in I}, \left\lbrace g_{\alpha\beta} \right\rbrace_{\alpha, \beta \in I} \right),$ then there exists a good refinement $\left\lbrace V_{j}\right\rbrace_{j \in J}$ of $\left\lbrace U_{\alpha}\right\rbrace_{\alpha \in I}$ such that there exists \textbf{smooth} transition maps $h_{ij}\in C^{\infty}\left(V_{i}\cap V_{j}; G\right)  $ for all $i,j \in J,$ whenever the intersection is non-empty, satisfying 
	\begin{itemize}
		\item[(i)] $h_{ij}h_{jk} = h_{ik} \quad \text{ for a.e. } x \in V_{ijk} \text{ whenever } V_{ijk} \neq \emptyset, $
		\item[(ii)] $ \left\lVert h_{ij} - g_{\phi(i)\phi(j)}\right\rVert_{W^{k,p}\left( V_{ij};G\right)} \leq \varepsilon \text{ whenever } V_{ijk} \neq \emptyset,$ where $\phi: J \rightarrow I$ is the refinement map. 
		\item[(iii)] For each $j \in J,$ there exists maps $\rho_{j} \in W^{k,p} \left(  V_{j}; G \right) $ such that $$h_{ij} = \rho_{i} g_{\phi(i)\phi(j)} \rho_{j}^{-1}  \qquad \text{ for a.e. } x \in V_{ij} \text{ whenever } V_{ij} \neq \emptyset.$$  
	\end{itemize}
\end{theorem}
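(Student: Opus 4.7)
The plan is to reduce Theorem \ref{smoothaapproxcritical} directly to Theorem \ref{smoothaapproxcriticalconn} by equipping the bundle $P$ with an artificial $\mathcal{U}^{k,p}$ connection and then simply discarding the connection from the conclusion. All the hard analysis (Coulomb gauge fixing, regularity of Coulomb bundles, smoothing of connection and transition maps simultaneously) has already been carried out in that stronger theorem, so what remains here is the purely algebraic observation that any $W^{k,p}$ bundle in the critical dimension admits a $\mathcal{U}^{k,p}$ connection.

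First I would construct such a connection via a partition of unity. Choose $\{\psi_\alpha\}_{\alpha \in I}$ subordinate to $\{U_\alpha\}$ and define, for each $\beta \in I$,
\[
A_\beta := \sum_{\substack{\alpha \in I \\ U_{\alpha\beta}\neq\emptyset}} \psi_\alpha \, g_{\alpha\beta}^{-1}\, dg_{\alpha\beta} \qquad \text{on } U_\beta.
\]
Differentiating the cocycle identity $g_{\alpha\gamma}=g_{\alpha\beta}g_{\beta\gamma}$ gives $g_{\alpha\gamma}^{-1}dg_{\alpha\gamma} = g_{\beta\gamma}^{-1}(g_{\alpha\beta}^{-1}dg_{\alpha\beta})g_{\beta\gamma} + g_{\beta\gamma}^{-1}dg_{\beta\gamma}$, and summing against $\psi_\alpha$ (using $\sum_\alpha \psi_\alpha \equiv 1$) yields the gluing relation \eqref{gluing relation def} for $\{A_\beta\}$. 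For $k=1$, $p=n$: each $g_{\alpha\beta}^{-1}dg_{\alpha\beta} \in L^n$, hence $A_\beta \in L^n$; and $dA_\beta$ is a sum of terms $d\psi_\alpha \wedge g_{\alpha\beta}^{-1}dg_{\alpha\beta} \in L^n \hookrightarrow L^{n/2}$ and, via the Maurer--Cartan identity, $\psi_\alpha \cdot g_{\alpha\beta}^{-1}dg_{\alpha\beta} \wedge g_{\alpha\beta}^{-1}dg_{\alpha\beta} \in L^{n/2}$. For $k=2$, $p=n/2$, the analogous verification uses that $W^{2,n/2}$ acts on itself by the relevant products on the small geodesic balls making up the cover. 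Thus $A \in \mathcal{U}^{k,p}(P)$.

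Now apply Theorem \ref{smoothaapproxcriticalconn} to the pair $(P,A)$ with the prescribed $\varepsilon$. It produces a good refinement $\{V_j\}_{j\in J}$, smooth cocycles $h_{ij}$, smooth local forms $A^\varepsilon_j$, and Sobolev gauges $\rho_j \in W^{k,p}(V_j;G)$ satisfying its conclusions (i)--(v). Conclusion (i) of the present statement is conclusion (i) there. Conclusion (ii) here is conclusion (iii) there. For conclusion (iii), set $\tilde\rho_j := \rho_j^{-1}$; then conclusion (iv) of Theorem \ref{smoothaapproxcriticalconn} reads $h_{ij} = \tilde\rho_i\, g_{\phi(i)\phi(j)}\, \tilde\rho_j^{-1}$, which is precisely the form required. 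One simply discards the connection data $A^\varepsilon_j$ and conclusions (ii), (v).

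The only genuine content in this reduction is the existence of the auxiliary connection $A$, and the only subtlety there is ensuring that products such as $g_{\alpha\beta}^{-1} dg_{\alpha\beta}$ and the bilinear wedge $g^{-1}dg \wedge g^{-1}dg$ make sense and lie in the expected Lebesgue spaces for $G$-valued $W^{k,p}$ maps at $kp=n$. These are standard facts about $G$-valued Sobolev maps (the paper's Appendix \ref{Gvaluedmaps} presumably records them), so I do not expect any obstacle beyond routine verification.
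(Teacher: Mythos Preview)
Your proposal is correct and follows essentially the same route as the paper: the paper also deduces the theorem as an immediate corollary of Theorem \ref{smoothaapproxcriticalconn} after establishing the claim that every $W^{1,n}$ bundle admits a $\mathcal{U}^{1,n}$ connection, constructed exactly via the partition-of-unity formula $A_{\alpha} = \sum_{\beta} \psi_{\beta}\, g_{\beta\alpha}^{-1} dg_{\beta\alpha}$ and checked to lie in $\mathcal{U}^{1,n}$ by the Maurer--Cartan identity. Your observation that the gauge maps must be inverted to match the form of conclusion (iii) is also implicit in the paper's setup.
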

\begin{proof}
	Once again we shall prove the case $k=1.$ The result will be an immediate corollary of theorem \ref{smoothaapproxcriticalconn} as soon as we show the following claim. 
		\begin{claim} Every $W^{1,n}$ bundle admits a $\mathcal{U}^{1,n}$ connection.	\end{claim} 
\noindent Pick a partition of unity $\left\lbrace \psi_{\alpha} \right\rbrace_{\alpha}$ subordinate to the cover $\left\lbrace U_{\alpha}\right\rbrace_{\alpha \in I}$ and define 
\begin{equation}\label{definingconn}
A_{\alpha} := \sum_{\substack{\beta \in I ,\beta \neq \alpha, \\ U_{\alpha}\cap U_{\beta} \neq \emptyset}} \psi_{\beta} g_{\beta\alpha}^{-1} dg_{\beta\alpha}  \qquad \text{ for each } \alpha \in I.
\end{equation}
Now clearly $ A_{\alpha} \in L^{n}$ and we also have $dA_{\alpha} \in L^{\frac{n}{2}},$ since \begin{align*}
\left\lVert d \left(  g_{\beta\alpha}^{-1} d g_{\beta\alpha} \right) \right\rVert_{L^{\frac{n}{2}}}= \left\lVert - g_{\beta\alpha}^{-1} dg_{\beta\alpha} \wedge g_{\beta\alpha}^{-1} dg_{\beta\alpha}\right\rVert_{L^{\frac{n}{2}}} \stackrel{\text{H\"{o}lder}}{\leq} C_{G}^{2}\left\lVert d g_{\beta\alpha}  \right\rVert_{L^{n}}^{2}.
\end{align*}
By a straight forward computation, we can verify the gluing condition \begin{equation*}
A_{\beta} = g_{\alpha\beta}^{-1} d g_{\alpha\beta} + g_{\alpha\beta}^{-1} A_{\alpha} g_{\alpha\beta} \qquad \text{ a.e. in } U_{\alpha} \cap U_{\beta}
\end{equation*} holds for every $\alpha, \beta \in I$ whenever $U_{\alpha} \cap U_{\beta} \neq \emptyset.$  This completes the proof. \end{proof}
\begin{remark}\label{possiblydifftopology_smoothapprox}
	Note that the smooth bundles $P^{\varepsilon},$ given by Theorem \ref{smoothaapproxcriticalconn} also satisfies the conclusions of Theorem \ref{smoothaapproxcritical} for any given $\mathcal{U}^{k,p}$ connection on $P.$  Indeed, we are deducing Theorem \ref{smoothaapproxcritical} from Theorem \ref{smoothaapproxcriticalconn} by showing the existence of \emph{one} such  connection. It is not hard to show that given a fixed connection $A \in \mathcal{U}^{k,p}\left( P \right)$, the topological isomorphism class of the smooth bundles $P^{\varepsilon}$, constructed in Theorem \ref{smoothaapproxcriticalconn} would be independent of $\varepsilon$ for $\varepsilon> 0$ small enough ( see section \ref{topology section} ). So, given a pair $\left( P, A \right)$, the approximating smooth bundles can be chosen to have a fixed topological isomorphism class. However, if $A, B \in \mathcal{U}^{k,p}\left( P \right)$ are two different $\mathcal{U}^{k,p}$ connection on $P,$ there is no reason for the approximating smooth bundles given by Theorem \ref{smoothaapproxcriticalconn} for the pair $\left(P, A\right)$ and $\left(P, B\right)$ to be $C^{0}$-equivalent. In general, they are not. In particular, this implies that given a bundle $P \in \mathcal{P}_{G}^{k,p}\left(M^{n}\right),$ it might be possible to construct two different sequence of smooth bundles $\left\lbrace P_{1}^{\varepsilon} \right\rbrace_{\varepsilon > 0}$ and $\left\lbrace P_{2}^{\varepsilon} \right\rbrace_{\varepsilon > 0}$, both of which approximates $P$ in the sense of Theorem \ref{smoothaapproxcritical}, but $\left[ P_{1}^{\varepsilon} \right]_{0} \neq \left[ P_{2}^{\varepsilon} \right]_{0}, $ for every $\varepsilon > 0.$  
\end{remark}

\subsection{Circle bundles in arbitrary dimension}
The results improve substantially if the $G$ is Abelian, since in this case finding a Coulomb gauge is much easier.  
\begin{theorem}\label{Abeliansmoothaapproxsupercritical}
	 Given any $P \in \mathcal{P}_{S^{1}}^{k,p}\left(M^{n}\right)$ $A \in \mathcal{U}^{k,p}\left( P \right)$ and any $\varepsilon > 0,$ there is a smooth principal $S^{1}$-bundle $P^{\varepsilon} \in \mathcal{P}_{S^{1}}^{\infty}\left(M^{n}\right)$ with a smooth connection $A^{\varepsilon} \in \mathcal{A}^{\infty}\left( P^{\varepsilon} \right)$ such that $P^{\varepsilon}$ is $\varepsilon$-close to $P$ in the $W^{k,p}$ norm, $A^{\varepsilon}$ is $\varepsilon$ close to $A$ in $\mathcal{U}^{k,p}$ norm and $P^{\varepsilon} \stackrel{W^{k,p}}{\simeq} P .$  More precisely, if $\left( P, A \right) = \left( \left\lbrace U_{\alpha}\right\rbrace_{\alpha \in I}, \left\lbrace g_{\alpha\beta} \right\rbrace_{\alpha, \beta \in I}, \left\lbrace A_{\alpha}\right\rbrace_{\alpha \in I} \right) ,$ then there exists a good refinement $\left\lbrace V_{j}\right\rbrace_{j \in J}$  such that there exists 
	\textbf{smooth} $i\mathbb{R}$-valued $1$-forms $B_{j} \in C^{\infty}\left(V_{j}; \Lambda^{1}\mathbb{R}^{n}\otimes i\mathbb{R} \right)$ and \textbf{smooth} transition maps $h_{ij}\in C^{\infty}\left(V_{i}\cap V_{j}; S^{1}\right)  $ for all $i,j \in J,$ whenever the intersection is non-empty, satisfying 
	\begin{itemize}
		\item[(i)] $h_{ij}h_{jk} = h_{ik} \quad \text{ for a.e. } x \in V_{ijk} \text{ whenever } V_{ijk} \neq \emptyset, $
		\item[(ii)] $B_{j} = h^{-1}_{ij}dh_{ij} + h^{-1}_{ij}B_{i}h_{ij} \qquad \text{ for all } i,j \in J \text{ with } V_{ij} \neq \emptyset,$
		\item[(iii)] $ \left\lVert h_{ij} - g_{\phi(i)\phi(j)}\right\rVert_{W^{k,p}\left( V_{ij};S^{1}\right)} \leq \varepsilon \text{ whenever } V_{ijk} \neq \emptyset,$ where $\phi: J \rightarrow I$ is the refinement map.
		\item[(iv)] For each $j \in J,$ there exists maps $\rho_{j} \in W^{k,p} \left(  V_{j}; S^{1} \right) $ such that $$h_{ij} = \rho_{i}^{-1} g_{\phi(i)\phi(j)} \rho_{j}  \qquad \text{ for a.e. } x \in V_{ij} \text{ whenever } V_{ij} \neq \emptyset.$$  
		\item[(v)]  $ \left\lVert B_{j} - A^{\rho_{j}}_{\phi(j)}\right\rVert_{\mathcal{U}^{k,p}\left( V_{j}; \Lambda^{1}\mathbb{R}^{n}\otimes i\mathbb{R} \right)} \leq \varepsilon,$  for every $j \in J.$ 
	\end{itemize}
\end{theorem}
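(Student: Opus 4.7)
The plan is to mimic the three-step scheme of Theorem~\ref{smoothaapproxcriticalconn}—(i) put the connection into local Coulomb gauges, (ii) observe that the resulting transition functions form a smooth bundle, (iii) smoothly approximate the Coulomb connection via a partition of unity—but to exploit the abelian nature of $S^1$ to bypass both the smallness requirement on $\|F_A\|_{L^{n/2}}$ and the restriction $kp = n$. The whole argument hinges on the observation that, for $G=S^1$, the Coulomb condition is a \emph{linear} PDE and its solutions are \emph{harmonic} rather than merely H\"{o}lder, so the regularity gain in Theorem~\ref{regularity Coulomb} becomes $C^\infty$ essentially for free.

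First I would refine to small geodesic balls $\{V_j\}_{j \in J}$ with refinement map $\phi: J \to I$ and set $A_j := A_{\phi(j)}|_{V_j}$. Writing $A_j = ia_j$ with $a_j$ a real $1$-form and making the ansatz $\rho_j = e^{i\varphi_j}$, the Coulomb problem $d^*A_j^{\rho_j}=0$, $\iota^*_{\partial V_j}(\ast A_j^{\rho_j})=0$ reduces to the linear Neumann problem
\begin{align*}
\Delta\varphi_j = -d^*a_j \text{ in } V_j, \qquad \iota^*_{\partial V_j}(\ast d\varphi_j) = -\iota^*_{\partial V_j}(\ast a_j) \text{ on } \partial V_j,
\end{align*}
whose compatibility condition is automatic because the right-hand side is in the range of $d^*$. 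This problem is solvable in $W^{k,p}$ by linear Hodge/Neumann theory on balls with estimates $\|d\varphi_j\|_{\mathcal{U}^{k,p}} \leq C \|a_j\|_{\mathcal{U}^{k,p}}$ -- no smallness of $F_A$ is needed, and the bound holds in every dimension. Setting $A_j^C := A_j + i\,d\varphi_j$ and $h_{ij} := \rho_i^{-1} g_{\phi(i)\phi(j)} \rho_j \in W^{k,p}(V_{ij}; S^1)$, the cocycle condition is automatic and conclusion (iv) is the definition.

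Next comes the key step: $h_{ij}$ is \emph{smooth}. Because $S^1$ is abelian, the gluing relation for the pulled-back Coulomb connection gives
\begin{align*}
h_{ij}^{-1} d h_{ij} = A_j^C - A_i^C =: i\,\omega_{ij} \text{ in } V_{ij}.
\end{align*}
The right-hand side is co-closed (difference of two Coulomb forms) and closed ($d(h^{-1}dh)=0$ in the abelian case since $dh\wedge dh=0$), hence harmonic. Since $V_{ij}$ is a simply connected ball, $\omega_{ij} = d\psi_{ij}$ for some real-valued $\psi_{ij}$, giving $h_{ij} = c_{ij} e^{i\psi_{ij}}$ with $c_{ij}\in S^1$ constant and $\Delta\psi_{ij}=0$; by elliptic regularity $\psi_{ij}\in C^\infty$, so $h_{ij}$ is smooth and $(\{V_j\}, \{h_{ij}\})$ defines $P^\varepsilon \in \mathcal{P}_{S^1}^{\infty}(M^n)$. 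Conclusion (i) then holds automatically. By further refining the cover so that $\|a_j\|_{\mathcal{U}^{k,p}(V_j)}$ is as small as desired (absolute continuity of the integral), the Hodge estimate yields smallness of $\|d\rho_j\|_{\mathcal{U}^{k,p}}$ and $\|\rho_j - \mathbf{1}\|_{L^\infty}$, and the short computation of Step~2 of Theorem~\ref{smoothaapproxcriticalconn} gives (iii).

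Finally, for the connection itself I would import \emph{verbatim} the partition-of-unity construction from Step~3 of Theorem~\ref{smoothaapproxcriticalconn}: pick smooth approximants $\tilde{A}_j^\varepsilon \in C^\infty(V_j; \Lambda^1 \mathbb{R}^n \otimes i\mathbb{R})$ of $A_j^C$ in $\mathcal{U}^{k,p}$ and define
\begin{align*}
B_j := \sum_{l:\, V_{jl}\neq \emptyset} \psi_l \bigl[h_{lj}^{-1}\,dh_{lj} + \tilde{A}_l^\varepsilon\bigr].
\end{align*}
Smoothness of $B_j$ is immediate from smoothness of $h_{lj}$ and $\tilde{A}_l^\varepsilon$, the gluing identity (ii) follows from differentiating the cocycle condition (now trivially, since everything commutes), and (v) follows from the standard estimates using the $L^\infty$ bound on $d\psi_l$ exactly as in Theorem~\ref{smoothaapproxcriticalconn}. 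The main obstacle, which is also the main simplification over the non-abelian proof, is concentrated in the passage from Step~1 to Step~2: we must solve the Coulomb equation in arbitrary dimension without any smallness, which is possible precisely because the equation is linear, and we must lift the circle-valued $h_{ij}$ through the exponential map, which is valid because $V_{ij}$ is simply connected and $h_{ij}^{-1}dh_{ij}$ is exact there. Once both observations are in place, the dimensional restriction $kp=n$ of the non-abelian theory disappears.
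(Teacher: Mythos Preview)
Your proof is correct and follows the same overall strategy as the paper: solve a linear Neumann problem to obtain local Coulomb gauges without any smallness hypothesis, observe that the resulting transition functions $h_{ij}$ are smooth because their lift to $i\mathbb{R}$ is harmonic, and then run the partition-of-unity construction of Step~3 of Theorem~\ref{smoothaapproxcriticalconn} to approximate the connection. The one substantive difference is in how you produce the lift. The paper invokes the $W^{1,p}$ lifting results for circle-valued maps due to Bethuel--Zheng \cite{BethuelZheng_density_spheretargets} and Bourgain--Brezis--Mironescu \cite{BourgainBrezisMironescu_lifting} to write $h_{ij} = e^{i\eta_{ij}}$ directly, and then reads off $\Delta\eta_{ij}=0$ from the Coulomb condition. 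You instead observe that the Maurer--Cartan form $\omega_{ij} = -i\,h_{ij}^{-1}dh_{ij}$ is closed (abelianity) and co-closed (difference of Coulomb forms), and then apply the Poincar\'e lemma on the contractible intersection $V_{ij}$ to get $\omega_{ij}=d\psi_{ij}$ and hence $h_{ij}=c_{ij}e^{i\psi_{ij}}$. Your route is more self-contained, since it avoids citing the nontrivial lifting theorems; the paper's route sidesteps the need to justify the distributional Poincar\'e lemma and the closedness of $\omega_{ij}$ at low regularity.

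One small caveat: your claim that $\lVert \rho_j - \mathbf{1}\rVert_{L^\infty}$ can be made small is only literally correct when $kp>n$. In the general case one argues exactly as in Step~2 of Theorem~\ref{smoothaapproxcriticalconn}, using only the smallness of $\lVert d\rho_j\rVert_{L^p}$ together with the $L^\infty$ bound on $S^1$ and the smallness of the measure of $V_{ij}$; this already suffices for conclusion (iii).
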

\begin{proof}
	We follow the same approach as in the proof of theorem \ref{smoothaapproxcriticalconn}. The effectiveness of the approach via local Coulomb gauges makes the proof of this result quite easy. Firstly, since $S^{1}$ is Abelian, local Coulomb gauges always exist without the need for any smallness condition on the norm of the curvature. 
	We choose a good refinement $\left\lbrace V_{j}\right\rbrace_{j \in J}$ of $\left\lbrace U_{\alpha}\right\rbrace_{\alpha \in I}$ in such a way that there is an enlarged cover $\left\lbrace V^{'}_{j}\right\rbrace_{j \in J}$ which is also a refinement of $\left\lbrace U_{\alpha}\right\rbrace_{\alpha \in I}$ with the same refinement map $\phi: J \rightarrow I.$ More precisely, this means we have $$ V_{j} \subset\subset V^{'}_{j} \subset U_{\phi(j)} \quad \text{ for every } j \in J, \qquad \bigcup\limits_{j \in J} V_{j} = \bigcup\limits_{\alpha \in I} U_{\alpha} = M^{n}. $$ Now existence of local Coulomb gauges $\rho_{j}$ on $V^{'}_{j}$ boils down to finding a real-valued function $\psi_{j} \in W^{1,p}\left(V^{'}_{j}\right)$  solving the following inhomogeneous Neumann problem for the Laplacian
	\begin{align*}
	\left\lbrace \begin{aligned}
	\Delta \psi_{j} &= - i d^{\ast}A_{\phi(j)} &&\text{ in } V^{'}_{j}, \\
	\frac{\partial \psi_{j}}{\partial \nu} &= - i \iota^{\ast}_{\partial V^{'}_{j}} \left( \ast A_{\phi(j)}\right) &&\text{ on } \partial V^{'}_{j} .
	\end{aligned}\right. 
	\end{align*}
	Clearly, if $\psi_{j}$ solves the Neumann problem above, then $\rho_{j} = e^{id\psi_{j}}$ is the desired Coulomb gauge. But since $d^{\ast} A_{\phi(j)} = \ast \left[ d \left( \ast A_{\phi(j)}\right)\right]$, Stokes theorem implies the compatibility condition $$ - \int_{\partial V^{'}_{j}}\iota^{\ast}_{\partial V^{'}_{j}} \left( \ast A_{\phi(j)}\right) = - \int_{V^{'}_{j}} d^{\ast}A_{\phi(j)} . $$ Thus existence and estimates follow from standard elliptic theory. Gluing the local Coulomb gauges can be done exactly as before and thus we can construct gauges $h_{ij} = \rho_{i}^{-1}g_{\phi(i)\phi(j)}\rho_{j}$ such that $\left( \left\lbrace V_{j} \right\rbrace_{j \in J},  \left\lbrace h_{ij} \right\rbrace_{i,j \in J}\right)$ is a principal $S^{1}$-bundle of class $W^{1,p}$ over $M^{n}$ and the gluing relations for the connection are satisfied in $V^{'}_{ij}.$ But since  $S^{1}$ is Abelian, the gluing relations are 
	\begin{align}\label{abeliangluing}
	h_{ij}^{-1}dh_{ij} = A^{\rho_{j}}_{j} -A^{\rho_{i}}_{i} \qquad \text{ in } V^{'}_{ij} \text{ whenever } V^{'}_{ij} \neq \emptyset. 
	\end{align}   
	But $h_{ij}:V^{'}_{ij} \rightarrow S^{1} $ is a $W^{1,p}$ map and thus, by results in \cite{BethuelZheng_density_spheretargets} ( also \cite{BourgainBrezisMironescu_lifting} ), there exists a `lift' $\eta_{ij} \in W^{1,p}\left(V^{'}_{ij} \right)$ such that $ h_{ij} = e^{i\eta_{ij}}.$ Substituting in \eqref{abeliangluing}, we obtain 
	\begin{align*}
	d\eta_{ij} = i \left( A^{\rho_{j}}_{j} -A^{\rho_{i}}_{i}\right) \qquad \text{ in } V^{'}_{ij} \text{ whenever } V^{'}_{ij} \neq \emptyset. 
	\end{align*}
	Using the fact that both $d^{\ast}A^{\rho_{j}}_{j}$ and $d^{\ast}A^{\rho_{i}}_{i}$ are zero, we deduce 
	\begin{align*}
	\Delta \eta_{ij} = d^{\ast}d\eta_{ij} = 0 \qquad \text{ in } V^{'}_{ij} \text{ whenever } V^{'}_{ij} \neq \emptyset. 
	\end{align*}
	Thus, $\eta_{ij}$ is harmonic in $V^{'}_{ij}.$ By standard interior regularity for harmonic functions, $\eta_{ij}$ is actually \textbf{smooth} in $V_{ij}$ and consequently, so is $h_{ij}.$ Thus, we have smoothed the bundle in one step. Now we can follow step 3 of the proof of theorem \ref{smoothaapproxcriticalconn} and approximate the connection by smooth ones. The only difference is that the corresponding estimates are far simpler due to the fact that $S^{1}$ is Abelian. This completes the proof. \end{proof}

\section{Topology of bundles in the critical dimension}\label{topology section}
\subsection{Coulomb bundles and gauge transformations}
We have already shown in Theorem \ref{regularity Coulomb} that Coulomb bundles are $C^{0}$ bundles. Now we show their $C^{0}$-equivalence class is stable under $W^{k,p}$ gauge transformations for $kp =n.$ 
\begin{proposition}\label{gaugerelatedcoulomb}
	Let $kp=n.$ Let $P^{i} \in \mathcal{P}_{G}^{k,p}\left(M^{n}\right)$ and $A_{i} \in \mathcal{U}^{k,p}\left(P^{i}\right)$ such that $A^{i}$ is Coulomb on $P^{i}$, for $i=1,2,$ and $\left( P^{1}, A^{1}\right)\stackrel{W^{k,p}}{\simeq}_{\sigma} \left( P^{2}, A^{2}\right).$ Then $P_{1}$ and $P_{2}$ are $C^{0}$-equivalent. 	
\end{proposition}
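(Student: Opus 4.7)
The plan is to reduce the claim to the same elliptic regularity argument used to prove Theorem \ref{regularity Coulomb}. The gauge transformation $\sigma = \{\sigma_j\}_{j \in J}$ relating the two pairs lives a priori only in $W^{k,p}$, and the goal is to promote it to $C^{0}$ (in fact $C^{0,\alpha}$ for any $\alpha<1$). Since the equivalence relation $(P^1,A^1) \stackrel{W^{k,p}}{\simeq}_\sigma (P^2, A^2)$ means, on a common refinement $\{W_j\}_{j \in J}$ with refinement maps $\phi$ and $\tilde\phi$, that the transition maps are conjugate and the connection satisfies the pointwise gauge change relation
\begin{equation*}
(A^2_{\tilde\phi(j)})|_{W_j} = \sigma_j^{-1} d\sigma_j + \sigma_j^{-1} (A^1_{\phi(j)})|_{W_j}\, \sigma_j \qquad \text{a.e.\ in } W_j,
\end{equation*}
the first step is to rewrite this identity as
\begin{equation*}
d\sigma_j = \sigma_j A^2_{\tilde\phi(j)} - A^1_{\phi(j)}\, \sigma_j \qquad \text{a.e.\ in } W_j.
\end{equation*}

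The key step is to differentiate this first-order relation once more and exploit the Coulomb conditions $d^{\ast} A^1_{\phi(j)} = 0$ and $d^{\ast} A^2_{\tilde\phi(j)} = 0$. Applying $-d^{\ast}$ to the identity above (equivalently, expressing the two sides through Hodge star and taking $d$), and using the Leibniz rule for $d^{\ast}$ on products together with the vanishing of $d^{\ast} A^i$, one obtains an equation of exactly the critical-elliptic form
\begin{equation*}
-\Delta \sigma_j = \ast\bigl[ d\sigma_j \wedge \ast A^2_{\tilde\phi(j)} \bigr] + \ast\bigl[ \ast A^1_{\phi(j)} \wedge d\sigma_j \bigr] \qquad \text{in } W_j,
\end{equation*}
which is precisely of the shape $\Delta \sigma_j = \mathcal{A}\cdot \nabla \sigma_j + f$ treated in Lemma \ref{ellipticCritical} with $f \equiv 0$, the coefficient $\mathcal{A}$ being linear in the $L^n$-forms $A^1_{\phi(j)}$ and $A^2_{\tilde\phi(j)}$. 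Since $\sigma_j$ is $W^{k,p} \subset W^{1,2}$ valued in the compact group $G$, viewing it as a matrix-valued map via an embedding $G \hookrightarrow \mathfrak{M}(N)$ makes the lemma directly applicable.

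Next, since the $L^n$-norm of $A^1$ and $A^2$ are absolutely continuous with respect to Lebesgue measure, I would pass to a good refinement $\{V_j\}_{j \in J}$ of $\{W_j\}_{j \in J}$ (with each $V_j$ a small geodesic ball compactly contained in some $W_{j'}$) so that on slightly enlarged sets $V_j \subset\subset V_j' \subset W_{j'}$ the $L^n$ norms of both $A^1_{\phi(j')}|_{V_j'}$ and $A^2_{\tilde\phi(j')}|_{V_j'}$ are below the smallness threshold $\varepsilon_{\Delta_{Cr}}$ of Lemma \ref{ellipticCritical}. Invoking the lemma with $f = 0$ and $q$ chosen in $(n/2, n)$, we conclude $\sigma_{j'} \in W^{2,q}_{\mathrm{loc}}(V_j'; G)$, hence $\sigma_{j'} \in W^{2,q}(V_j; G)$. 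By Sobolev embedding, $\sigma_{j'} \in C^{0,\alpha}(\overline{V_j}; G)$ for every $\alpha < 1$.

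The main (only) obstacle is the derivation of the clean elliptic equation for $\sigma$: one must keep track of the fact that $\sigma$ multiplies $A^i$ from both sides (so the product rule produces several terms), and one must verify that after using $d^{\ast} A^i = 0$ all surviving terms are first order in $\nabla \sigma$ with $L^n$ coefficients built from $A^1$ and $A^2$. Once this is in place, Lemma \ref{ellipticCritical} does the rest, and the resulting continuous (indeed H\"older) gauge $\sigma$ gives the desired $C^0$-equivalence between $P^1$ and $P^2$ on the refinement $\{V_j\}_{j \in J}$.
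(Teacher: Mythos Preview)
Your proposal is correct and follows essentially the same route as the paper: derive the first-order relation $d\sigma_j = \sigma_j A^2 - A^1 \sigma_j$, use the Coulomb conditions $d^{\ast}A^1 = d^{\ast}A^2 = 0$ to obtain the critical elliptic equation $-\Delta\sigma_j = \ast[d\sigma_j \wedge \ast A^2] + \ast[\ast A^1 \wedge d\sigma_j]$, and then apply Lemma~\ref{ellipticCritical} with $f=0$ after refining the cover to make the $L^n$ norms of $A^1$ and $A^2$ small. The only difference is that you spell out a few details (the embedding $G \hookrightarrow \mathfrak{M}(N)$, the explicit choice of enlarged balls $V_j'$) that the paper's sketch leaves implicit.
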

\begin{proof}
	The proof is very similar to how we proved the continuity of Coulomb bundles, so we provide only a brief sketch. Since the connections are gauge related, we have
	$$ d\sigma_{i} = \sigma A_{i}^{2} - A_{i}^{1}\sigma \qquad \text{ in } U_{i},$$ 
	for every $i\in I. $ Since $A^{1}, A^{2}$ are both Coulomb, we have, 
	$$ - \Delta\sigma_{i} = \ast \left[ d\sigma_{i} \wedge \left( \ast A^{2}_{i}\right)\right] + \ast \left[  \left( \ast A^{1}_{i}\right) \wedge d\sigma_{i} \right]  \qquad \text{ in } U_{i},$$ for every $i\in I. $ But once again this is exactly of the form of eq \eqref{critical elliptic eqn} with $f=0.$ Thus, by passing to a refinement of the cover  such that $L^{n}$ norms of $A_{i}^{1}$ and $A_{i}^{2}$ are suitably small, using lemma \ref{ellipticCritical} we deduce the continuity of $\sigma_{i}$ in the interior. The proof is concluded by slightly shrinking the domains.    
\end{proof}
From this we deduce the uniqueness of Coulomb bundles for a connection. 
\begin{proposition}[Uniqueness of Coulomb bundles]
Let $kp=n.$ Given a pair $\left( P, A \right),$ where $P \in \mathcal{P}_{G}^{k,p}\left(M^{n}\right)$ and $A \in \mathcal{U}^{k,p}\left(P\right)$ there exists a  $C^{0}$-bundle $P_{A_{coulomb}}$ , \emph{unique up to $C^{0}$-equivalence}, such that $P_{A_{coulomb}} \stackrel{W^{k,p}}{\simeq}_{\sigma} P$ and $\sigma^{\ast}A$ is a $W^{1,\frac{n}{2}}$ connection on $P^{\varepsilon}_{A_{coulomb}}$ which is Coulomb.
\end{proposition}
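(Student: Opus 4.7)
The plan is to establish existence by constructing a Coulomb bundle explicitly, then deduce uniqueness by a direct application of Proposition \ref{gaugerelatedcoulomb}.

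For existence, I would mimic Steps 1 and 2 of the proof of Theorem \ref{smoothaapproxcriticalconn}. First, pass to a good refinement $\{V_j\}_{j\in J}$ of the cover of $P$ by geodesic balls chosen small enough that the $L^{n/2}$ norm of the curvature $F_{A_{\phi(j)}}$ restricted to (a slight enlargement of) $V_j$ is below the smallness threshold $\varepsilon_{Coulomb}$ of Lemma \ref{Coulombgauge}. This is possible by absolute continuity of the integral and the fact that the covers are made of small convex geodesic balls. Then Lemma \ref{Coulombgauge} (and its $\mathcal{U}^{2,n/2}$ variant stated in the remark following it) yields, for each $j \in J$, a gauge $\rho_j \in W^{k,p}(V_j; G)$ such that $A^{\rho_j}_{\phi(j)}$ is Coulomb. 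Setting $h_{ij} := \rho_i^{-1} g_{\phi(i)\phi(j)} \rho_j$ produces a $W^{k,p}$ cocycle defining a bundle $P_{A_{coulomb}}$ together with an equivalence map $\sigma = \{\rho_j\}$; the pullback $\sigma^*A$ has local representatives $A^{\rho_j}_{\phi(j)}$ which are Coulomb by construction. Finally, Theorem \ref{regularity Coulomb} promotes $P_{A_{coulomb}}$ to a $C^0$ (in fact $C^{0,\alpha}$) bundle.

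For uniqueness, suppose $\tilde{P}_{A_{coulomb}}$ is another such bundle, with equivalence map $\tilde\sigma: \tilde{P}_{A_{coulomb}} \stackrel{W^{k,p}}{\simeq} P$ and $\tilde\sigma^*A$ Coulomb. Composing equivalences, the map $\tau := \sigma^{-1}\tilde\sigma$ (on a common refinement) realizes a $W^{k,p}$ equivalence between $\tilde{P}_{A_{coulomb}}$ and $P_{A_{coulomb}}$, and under this equivalence the connection $\tilde\sigma^*A$ is carried to $\sigma^*A$. Since both of these are Coulomb connections in their respective bundles, Proposition \ref{gaugerelatedcoulomb} applies directly and yields that $\tilde{P}_{A_{coulomb}}$ and $P_{A_{coulomb}}$ are $C^0$-equivalent.

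The essential technical content is already contained in the earlier results: Lemma \ref{Coulombgauge} supplies the local gauges, Theorem \ref{regularity Coulomb} supplies $C^0$ regularity, and Proposition \ref{gaugerelatedcoulomb} supplies uniqueness. The only step requiring any care is to ensure that the refinement used for existence is fine enough both to make the curvature small (for Coulomb gauges to exist) and to make the $L^n$ norms of the Coulomb forms small (so that Theorem \ref{regularity Coulomb} applies); a single passage to a sufficiently fine good cover handles both, since the scale invariance noted in Lemmas \ref{elliptic estimate with zero boundary value} and \ref{ellipticCritical} and in Lemma \ref{Coulombgauge} ensures that the smallness thresholds do not degenerate as the radii shrink. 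No serious obstacle is expected.
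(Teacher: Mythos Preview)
Your proposal is correct and follows essentially the same approach as the paper: existence is obtained by invoking Steps 1 and 2 of the proof of Theorem \ref{smoothaapproxcriticalconn} (local Coulomb gauges from Lemma \ref{Coulombgauge}, then $C^{0}$ regularity from Theorem \ref{regularity Coulomb}), and uniqueness by composing the two $W^{k,p}$ equivalences and applying Proposition \ref{gaugerelatedcoulomb}. The paper's own proof is just a terser version of exactly this argument.
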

\begin{proof}
 Given any $\varepsilon > 0, $ the bundle $P^{\varepsilon}_{A_{coulomb}} \in \mathcal{P}^{0}\left(M^{n}\right)$ constructed in step 1 and 2 of the proof of theorem \ref{smoothaapproxcriticalconn} is one such bundle, so we only have to prove the uniqueness claim. But if $P_{1}, P_{2}$ be any two such bundles, then they are gauge-related to $P$ by $W^{k, p}$ gauges $\sigma_{1}, \sigma_{2}$ respectively. Then the pairs $\left( P_{1}, \sigma_{1}^{\ast}A \right),$ and $\left( P_{2}, \sigma_{2}^{\ast}A \right)$ are themselves gauge related by $\sigma_{1}^{-1}\circ \sigma_{2}.$ Thus Proposition \ref{gaugerelatedcoulomb} concludes the proof.  	
\end{proof}
\subsection{Definition of the topology}
\begin{definition}\label{definition of topology}\textbf{(Topology for bundles with connection)} Let $kp=n.$ Then to each pair $\left( P, A \right),$ where $P \in \mathcal{P}_{G}^{k,p}\left(M^{n}\right)$ and $A \in \mathcal{U}^{k,p}\left(P\right)$, we can associate a topological isomorphism class, denoted by $\left[P_{A} \right]_{W^{k,p}}$ by 
	\begin{align*}
	\left[ P_{A} \right]_{W^{k,p}}:= \left[ P_{A_{coulomb}} \right]_{C^{0}}. 
	\end{align*}
\end{definition}
\begin{remark}\label{specific choice of connection}
	Note that the topological isomorphism class is associated to a pair $(P, A)$ and \emph{not} to the bundle alone. However, the $\mathcal{U}^{1,n}$ connection we constructed in the proof of Theorem \ref{smoothaapproxcritical} is in some sense `canonical', i.e. it is constructed out of a partition of unity for the cover and the bundle transition maps only. So one can chose to always use this connection and thereby associate a topological isomorphism class to the bundle $P$ alone. This is what seems to be what Shevchishin has done implicitly in \cite{Shevchishin_limitholonomyYangMills}. However, in our opinion, such an assignment is undesirable for two reasons. Firstly, it once again decouples the topological information of the bundle from the connection and thus defeats the purpose of tying the topology of the bundles to the analysis of connections and curvatures under Yang-Mills energy. The second reason is that from the point of view of geometry, philosophically there is no canonical choice for a connection on a principal bundle.    
\end{remark}
The topological isomorphism class we defined is stable under $W^{k,p}$-gauge transformations.  Since by the transitivity of $W^{k,p}$ gauge relations, the associated Coulomb bundles are gauge related, this follows from Proposition \ref{gaugerelatedcoulomb}.  
\begin{proposition}[Stability of topology under gauge transformation]
if $kp=n,$  $P^{i} \in \mathcal{P}_{G}^{k,p}\left(M^{n}\right)$ and $A^{i} \in \mathcal{U}^{k,p}\left(P^{i}\right)$ for $i=1,2,$ and $\left( P^{1}, A^{1}\right)\stackrel{W^{k,p}}{\simeq}_{\sigma} \left( P^{2}, A^{2}\right).$ then $ \left[ P^{1}_{A^{1}} \right]_{W^{k,p}} = \left[ P^{2}_{A^{2}} \right]_{W^{k,p}}.$
\end{proposition}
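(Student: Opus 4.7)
The plan is to reduce the claim to Proposition \ref{gaugerelatedcoulomb} by producing two Coulomb representatives (one for each pair) that are themselves $W^{k,p}$-gauge related. Concretely, for $i=1,2$, apply the construction in Steps 1--2 of the proof of Theorem \ref{smoothaapproxcriticalconn} to the pair $(P^{i},A^{i})$. This yields, after passing to a sufficiently fine refinement, a $W^{k,p}$ gauge $\rho^{i}=\{\rho_{j}^{i}\}$ such that
\[
P^{i}_{A^{i}_{coulomb}} \stackrel{W^{k,p}}{\simeq}_{\rho^{i}} P^{i},
\]
and so that the pulled-back connection $(\rho^{i})^{\ast}A^{i}$ satisfies $d^{\ast}[(\rho^{i})^{\ast}A^{i}]=0$ on each chart of $P^{i}_{A^{i}_{coulomb}}$. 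By Theorem \ref{regularity Coulomb}, each $P^{i}_{A^{i}_{coulomb}}$ is a genuine $C^{0}$-bundle, and by the uniqueness statement preceding Definition \ref{definition of topology}, $[P^{i}_{A^{i}_{coulomb}}]_{C^{0}}=[P^{i}_{A^{i}}]_{W^{k,p}}$.

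Next, I would compose the three gauge equivalences. Starting from the given relation $(P^{1},A^{1})\stackrel{W^{k,p}}{\simeq}_{\sigma}(P^{2},A^{2})$ and using both Coulomb gauge relations above, pass to a common good refinement $\{W_{j}\}_{j\in J}$ of all covers involved (the refinement of $P^{1}$, of $P^{2}$, and the one supporting $\sigma$). On this common refinement the natural candidate gauge is
\[
\tau_{j} := (\rho^{1}_{j})^{-1}\, \sigma_{j}\, \rho^{2}_{j} \qquad \text{on } W_{j},
\]
and a direct computation using the defining relations shows that $\tau=\{\tau_{j}\}$ intertwines the cocycles of the two Coulomb bundles and transports $(\rho^{1})^{\ast}A^{1}$ to $(\rho^{2})^{\ast}A^{2}$. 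Since $G$ is compact, multiplication is a bounded operation on $W^{k,p}(\,\cdot\,;G)$ in the critical range $kp=n$ (see the $G$-valued map appendix referenced in the paper), so $\tau_{j}\in W^{k,p}(W_{j};G)$. Hence
\[
\bigl(P^{1}_{A^{1}_{coulomb}},\,(\rho^{1})^{\ast}A^{1}\bigr) \;\stackrel{W^{k,p}}{\simeq}_{\tau}\; \bigl(P^{2}_{A^{2}_{coulomb}},\,(\rho^{2})^{\ast}A^{2}\bigr),
\]
and both connections are Coulomb on their respective bundles by construction.

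Finally, apply Proposition \ref{gaugerelatedcoulomb} to this pair of Coulomb bundles with Coulomb connections that are $W^{k,p}$-gauge related by $\tau$. The proposition gives $P^{1}_{A^{1}_{coulomb}}\stackrel{C^{0}}{\simeq}P^{2}_{A^{2}_{coulomb}}$, i.e.\ $[P^{1}_{A^{1}_{coulomb}}]_{C^{0}}=[P^{2}_{A^{2}_{coulomb}}]_{C^{0}}$. Unwinding Definition \ref{definition of topology}, this is exactly $[P^{1}_{A^{1}}]_{W^{k,p}}=[P^{2}_{A^{2}}]_{W^{k,p}}$.

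The only genuinely non-trivial point is the verification that $\tau$ is a well-defined $W^{k,p}$ gauge on a single common refinement with the correct intertwining identities and that Sobolev products behave correctly; everything else is an immediate appeal to the results already established. Since the paper has set up multiplication of $W^{k,p}$ $G$-valued maps at $kp=n$ and transitivity of $W^{k,p}$ equivalence in the preliminaries, this step is routine, and Proposition \ref{gaugerelatedcoulomb} then closes the argument.
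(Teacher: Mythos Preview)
Your argument is correct and is exactly the approach the paper takes: the paper simply notes that by transitivity of $W^{k,p}$ gauge relations the associated Coulomb bundles are $W^{k,p}$-gauge related, and then invokes Proposition~\ref{gaugerelatedcoulomb}. You have spelled out this transitivity explicitly by writing down the composite gauge $\tau_{j}=(\rho^{1}_{j})^{-1}\sigma_{j}\rho^{2}_{j}$, but the content is identical.
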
 

\subsection{Compatibility for regular bundles and connections}
As we remarked before, the topological isomorphism class is associated to the \emph{pair} $\left(P, A\right)$ and not a property of $P$ alone. This is in sharp contrast to the case of Sobolev bundles in the subcritical regime $kp > n,$ where the transition functions of the bundle alone, being continuous, are sufficient to determine the topology of the bundle. Here, on the other hand, we encode the topological information about the bundle in the connection. Indeed, if $P \in \mathcal{P}_{G}^{k,p}\left(M^{n}\right)$ be a $W^{k,p}$ bundle over $M^{n}$ and $A, B \in \mathcal{U}^{k,p}\left(P\right)$ are two different  $\mathcal{U}^{k,p}$ connection on $P$ with $kp=n,$ then  in general $\left[ P_{A} \right]_{W^{k,p}} \neq  \left[ P_{B} \right]_{W^{k,p}}.$ Also, even if $P$ is a smooth bundle, then in general $\left[ P \right]_{C^{0}} \neq  \left[ P_{A_{coulomb}} \right]_{C^{0}}$ for $A \in \mathcal{U}^{k,p}\left(P\right)$ with $kp=n.$  However, we should justifiably demand that for a smooth connection on a smooth bundle, the notion of topology defined here should coincide with the usual notion. Now we are going to show that this is indeed the case. In fact, we shall show that the only relevant factor here is the regularity of the connection. 
\begin{theorem}\label{topology notions coincide}
Let $kp=n.$ Let $P \in \mathcal{P}_{G}^{k,p}\left(M^{n}\right) \cap \mathcal{P}_{G}^{0}\left(M^{n}\right)$ and let $A \in \mathcal{U}^{k,p}\left(P\right).$ Then $P_{A_{Coulomb}}$ and $P$ are $C^{0}$-equivalent as soon as $d^{\ast}A \in L^{\left(\frac{n}{2},1\right)}.$ \end{theorem}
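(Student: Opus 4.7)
The plan is to show that the Coulomb gauge $\rho = \{\rho_j\}_j$ witnessing the equivalence $P \stackrel{W^{k,p}}{\simeq}_{\rho} P_{A_{Coulomb}}$ is in fact continuous, so that the same $\rho$ promotes to a $C^0$-equivalence. Since $P$ is already a $C^0$-bundle, this forces $[P]_{C^0} = [P_{A_{Coulomb}}]_{C^0}$. The key input $d^{\ast}A \in L^{(n/2,1)}$ will feed into the source term of an elliptic equation for $\rho$ and, via the Lorentz-refined Sobolev embedding $W^{1,(n,1)} \hookrightarrow C^0$, just barely yield continuity.

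First, pass to a sufficiently fine refinement $\{V_j\}_{j \in J}$ on which the $L^n$ norm of $A_j := A_{\phi(j)}$ is below the smallness thresholds of Lemma \ref{Coulombgauge} and Lemma \ref{ellipticCritical}, and let $\rho_j \in W^{k,p}(V_j; G)$ be the local Coulomb gauge, so that $A_j^{\rho_j} = \rho_j^{-1} d\rho_j + \rho_j^{-1}A_j \rho_j$ satisfies $d^{\ast} A_j^{\rho_j} = 0$. Rewriting this as $d\rho_j = \rho_j A_j^{\rho_j} - A_j \rho_j$, applying $d^{\ast}$, and using the matrix-valued product rule $d^{\ast}(fu) = f d^{\ast}u - \langle df, u \rangle$ together with the Coulomb condition $d^{\ast} A_j^{\rho_j} = 0$ yields
\begin{equation*}
\Delta \rho_j = \tilde B_j \cdot \nabla \rho_j + (d^{\ast} A_j)\rho_j \qquad \text{ in } V_j,
\end{equation*}
where $\tilde B_j$ depends bilinearly on the entries of $A_j$ and $A_j^{\rho_j}$, so that, in view of Lemma \ref{Coulombgauge}, its $L^n$ norm is bounded by a constant times $\|A_j\|_{L^n} + \|A_j^{\rho_j}\|_{L^n}$, which can be made smaller than $\varepsilon_{\Delta_{Cr}}$ on a fine enough refinement. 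Since $G$ is compact, $\rho_j \in L^{\infty}$, and hence the source $f_j := (d^{\ast}A_j)\rho_j$ lies in $L^{(n/2,1)}(V_j)$ by the hypothesis.

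Now Lemma \ref{ellipticCritical} applies with $(q,\theta) = (n/2, 1)$, which is admissible because $n \geq 3$ forces $n/2 > 2n/(n+2)$. Noting that $\rho_j \in W^{1,n} \hookrightarrow W^{1,2}$ on the bounded domain $V_j$, the lemma yields $\rho_j \in W^{2,(n/2,1)}_{loc}(V_j)$. The Peetre-Sobolev embedding $W^{2,(n/2,1)} \hookrightarrow W^{1,(n,1)}$ and the sharp Stein embedding $W^{1,(n,1)} \hookrightarrow C^0$ then give $\rho_j \in C^0(V_j'; G)$ after a harmless shrinking of the cover to some $V_j' \subset\subset V_j$. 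Since the $g_{\phi(i)\phi(j)}$ are continuous and the $\rho_j$ are continuous, the equivalence $\rho$ is a $C^0$-equivalence, proving the theorem.

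The main obstacle is the endpoint character of the argument: mere $d^{\ast}A \in L^{n/2}$ would give $\rho \in W^{2,n/2}_{loc}$, which sits exactly at the failure threshold of the Sobolev embedding into $C^0$; the upgrade to the Lorentz space $L^{(n/2,1)}$ is precisely what Lemma \ref{ellipticCritical} and the Stein embedding together need to land in $C^0$. Hence the hypothesis $d^{\ast}A \in L^{(n/2,1)}$ is exactly the minimal regularity condition on the Coulomb part of $A$ that matches the critical scaling.
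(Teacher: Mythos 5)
Your argument is correct and is essentially the paper's own proof: you derive the same critical elliptic equation $\Delta\rho_j = \tilde B_j\cdot\nabla\rho_j + (d^{\ast}A_j)\rho_j$ for the Coulomb gauge, invoke Lemma \ref{ellipticCritical} in the Lorentz case with $(q,\theta)=(\frac{n}{2},1)$ after shrinking to make the $L^{n}$ norms small, and conclude via the Peetre--Sobolev embedding $W^{2,(\frac{n}{2},1)}\hookrightarrow W^{1,(n,1)}$ and the Stein embedding $W^{1,(n,1)}\hookrightarrow C^{0}$ that $\rho$ is continuous, hence a $C^{0}$-equivalence. The only cosmetic difference is that the smallness thresholds should also be phrased in terms of $\lVert F_{A}\rVert_{L^{n/2}}$ (needed both for Lemma \ref{Coulombgauge} and to control $\lVert A_j^{\rho_j}\rVert_{L^{n}}$), which your refinement argument accommodates anyway.
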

\begin{remark} In particular, $\left[ P_{A} \right]_{W^{k,p}} =  \left[ P \right]_{C^{0}},$  for any \textbf{smooth} connections $A$ on a $W^{k,p}\cap C^{0}$ bundle $P$ with $kp=n.$ 
\end{remark}
\begin{proof}
	We have the following equations for the Coulomb gauges for $A.$
	\begin{align*}
	\text{ for every } i \in I, \qquad \left\lbrace \begin{aligned}
	&A_{i}^{\rho_{i}} = \rho_{i}^{-1}d\rho_{i} + \rho_{i}^{-1}A_{i}\rho_{i} &&\text{ in } U_{i}, \\
	&d^{\ast}\left( A_{i}^{\rho_{i}}\right) = 0 &&\text{ in } U_{i}.
	\end{aligned}\right. 
	\end{align*}
	From this we can deduce the equation 
	\begin{align*}
	-\Delta \rho_{i} = \ast \left[ d \rho_{i} \wedge \left( \ast A^{\rho_{i}}_{i}\right)\right] + \ast \left[ \left( \ast A_{i}\right)\wedge d \rho_{i}  \right]- \left( d^{\ast} A_{i}\right) \rho_{i}   \qquad \text{ in } U_{i},  \text{ for every } i \in I.
	\end{align*}
	The last term on the right is $L^{\left(\frac{n}{2},1\right)}.$ By shrinking the domains to ensure smallness of the $L^{n}$ norms, Lemma \ref{ellipticCritical} concludes the proof by the Peetre-Sobolev embedding $W^{2,\left(\frac{n}{2},1\right)} \hookrightarrow W^{1,\left(n,1\right)}$ combined with the Stein  \cite{Stein_steintheorem} embedding $ W^{1,\left(n,1\right)}  \hookrightarrow C^{0}.$\end{proof}
\begin{theorem}\label{independent of connection}
	Let $kp=n.$ Let $P \in \mathcal{P}_{G}^{k,p}\left(M^{n}\right)$ and let $A, B \in \mathcal{U}^{k,p}\left(P\right).$ Then $P_{A_{Coulomb}}$ and $P_{B_{Coulomb}}$ are $C^{0}$-equivalent as soon as $d^{\ast}A, d^{\ast}B \in L^{\left(\frac{n}{2},1\right)}.$ 
	\end{theorem}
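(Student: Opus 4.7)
The plan is to reduce the statement to Theorem \ref{topology notions coincide}, applied on the Coulomb bundle $P_{A_{Coulomb}}$ itself (which is $W^{k,p}\cap C^0$ by Theorem \ref{regularity Coulomb}) equipped with the pulled-back connection $B^\sigma$. Let $\sigma=\{\sigma_i\}$ denote the $W^{k,p}$ gauge realising $P_{A_{Coulomb}}\stackrel{W^{k,p}}{\simeq}_\sigma P$, so that $A^\sigma$ is a Coulomb $\mathcal{U}^{k,p}$ connection on $P_{A_{Coulomb}}$. Since $B\in\mathcal{U}^{k,p}(P)$ and $\sigma\in W^{k,p}$, the pullback $B^\sigma\in\mathcal{U}^{k,p}(P_{A_{Coulomb}})$. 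If I can verify that $d^\ast(B^\sigma)\in L^{(n/2,1)}$, Theorem \ref{topology notions coincide} yields
\[
[P_{A_{Coulomb}}]_{C^0}=\bigl[(P_{A_{Coulomb}})_{(B^\sigma)_{Coulomb}}\bigr]_{C^0}.
\]
Any Coulomb gauge $\nu$ for $B^\sigma$ on $P_{A_{Coulomb}}$ composes with $\sigma$ into a $W^{k,p}$ gauge $\sigma\nu$ on $P$ that places $B$ in Coulomb form, so by the uniqueness of Coulomb bundles the right-hand side is $[P_{B_{Coulomb}}]_{C^0}$, closing the argument.

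The core task is therefore to show $d^\ast(B^\sigma)\in L^{(n/2,1)}$. Writing $B^\sigma=A^\sigma+\sigma^{-1}(B-A)\sigma$ and using $d^\ast A^\sigma=0$ gives
\[
d^\ast(B^\sigma)=\sigma^{-1}\bigl(d^\ast(B-A)\bigr)\sigma\;+\;(\text{commutator terms bilinear in }d\sigma\text{ and }(B-A)).
\]
The first term is in $L^{(n/2,1)}$ by hypothesis, since $\sigma$ is $G$-valued hence bounded. Because $(B-A)\in L^n=L^{(n,n)}$, H\"{o}lder's inequality for Lorentz spaces places the commutators in $L^{(n/2,1)}$ as soon as $d\sigma\in L^{(n,1)}$ (indeed $L^{(n,1)}\cdot L^{(n,n)}\hookrightarrow L^{(n/2,1)}$).

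The key upgrade $d\sigma\in L^{(n,1)}_{loc}$ is obtained by the very mechanism used in the proof of Theorem \ref{topology notions coincide}. The gauge-change relation together with $d^\ast A^{\sigma_i}=0$ yields the elliptic system
\[
-\Delta\sigma_i=\ast\bigl[d\sigma_i\wedge\ast A^{\sigma_i}_{i}\bigr]+\ast\bigl[\ast A_i\wedge d\sigma_i\bigr]-\bigl(d^\ast A_i\bigr)\sigma_i,
\]
which is of the form $\Delta\sigma_i=\mathcal{A}\cdot\nabla\sigma_i+F$ with $\mathcal{A}\in L^n$ and, crucially, $F\in L^{(n/2,1)}$ thanks to the standing assumption $d^\ast A\in L^{(n/2,1)}$. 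Shrinking the balls so that $\|\mathcal{A}\|_{L^n}$ falls below $\varepsilon_{\Delta_{Cr}}$, Lemma \ref{ellipticCritical} gives $\sigma_i\in W^{2,(n/2,1)}_{loc}$, whence $d\sigma_i\in L^{(n,1)}_{loc}$ by the Peetre--Sobolev embedding $W^{1,(n/2,1)}\hookrightarrow L^{(n,1)}$.

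The main obstacle is precisely this Lorentz-scale bookkeeping: a crude H\"{o}lder estimate using only $d\sigma\in L^n$ would land the commutators in $L^{n/2}$, which is borderline since $W^{2,n/2}$ fails to embed into $C^0$. It is essential to exploit the full strength of the hypothesis $d^\ast A,d^\ast B\in L^{(n/2,1)}$ to promote $d\sigma$ to $L^{(n,1)}$ and thereby push the commutators into $L^{(n/2,1)}$, which is the exact integrability required to invoke Theorem \ref{topology notions coincide}.
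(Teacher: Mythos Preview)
Your argument is correct, but it takes an unnecessarily indirect route compared to the paper's proof. The paper simply observes that the Coulomb gauges $\rho$ for $A$ and $\sigma$ for $B$ each satisfy an equation of the form
\[
-\Delta\rho_i=\ast\bigl[d\rho_i\wedge\ast A^{\rho_i}_i\bigr]+\ast\bigl[\ast A_i\wedge d\rho_i\bigr]-(d^\ast A_i)\rho_i,
\]
and the symmetric one for $\sigma$. By the hypothesis $d^\ast A,d^\ast B\in L^{(n/2,1)}$ and Lemma \ref{ellipticCritical}, both $\rho$ and $\sigma$ lie in $W^{2,(n/2,1)}_{loc}\hookrightarrow C^0$, so the composite gauge $u=\sigma^{-1}\rho$ relating $P_{A_{Coulomb}}$ and $P_{B_{Coulomb}}$ is continuous, and the proof is over. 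You in fact derive the same $W^{2,(n/2,1)}_{loc}$ regularity for $\sigma$, but then use only the weaker consequence $d\sigma\in L^{(n,1)}_{loc}$ to control commutators and feed $d^\ast(B^\sigma)\in L^{(n/2,1)}$ into Theorem \ref{topology notions coincide} as a black box. That works, but it is circular in spirit: the proof of Theorem \ref{topology notions coincide} is precisely the continuity-of-gauge argument you have already carried out for $\sigma$, so you are invoking the machinery a second time instead of reading off $\sigma\in C^0$ directly, running the symmetric argument for the Coulomb gauge of $B$, and composing.
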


\begin{proof}
	We shall denote by $\rho$ and $\sigma$, the Coulomb gauges for $A$ and $B$ respectively. Since $P_{A_{Coulomb}}$ and $P_{B_{Coulomb}}$ are gauge related to $P$ by $\rho$ and $\sigma$ respectively, $P_{A_{Coulomb}}$ is gauge related to $P_{B_{Coulomb}}$ by the gauges $u= \sigma^{-1}\rho.$ By passing to a common refinement if necessary, we can assume that there exists a fixed cover $\left\lbrace U_{i} \right\rbrace_{i \in I}$ of $M^{n}$ such that for each $i \in I, $ we have, 
	\begin{align*}
	&A_{i}^{\rho_{i}} = \rho_{i}^{-1}d\rho_{i} + \rho_{i}^{-1}A_{i}\rho_{i} &&\text{ in } U_{i}, \\
	&d^{\ast}\left( A_{i}^{\rho_{i}}\right) = 0 &&\text{ in } U_{i}, \intertext{ and } 
	&B_{i}^{\sigma_{i}} = \sigma_{i}^{-1}d\sigma_{i} + \sigma_{i}^{-1}B_{i}\sigma_{i} &&\text{ in } U_{i}, \\
	&d^{\ast}\left( B_{i}^{\sigma_{i}}\right) = 0 &&\text{ in } U_{i}. 
	\end{align*}
	Arguing exactly as in Theorem \ref{topology notions coincide}, we see that up to shrinking the domains, both $\sigma$ and $\rho$ are continuous and hence so is $u.$ 	 
\end{proof} 
\subsection{Topology in the limit without curvature concentration}
As we have already seen, the notion of the topology we defined depends heavily on the regularity of the connection. In particular, given a fixed $W^{k,p}$ bundle $P$ over $M^{n}$ and a sequence of $\mathcal{U}^{k,p}$ connections $\left\lbrace A^{\nu}\right\rbrace_{\nu \geq 1} \subset \mathcal{U}^{k,p}\left( P\right),$ the associated topological isomorphism classes $\left[ P_{A^{\nu}}\right]_{W^{k,p}}$ can all be different when $kp=n.$ However, this can not happen if we have some information regarding the curvatures of the connections. This is the content of the following theorem. 
\begin{theorem}\label{equiintegrability theorem}
Let $kp=n.$ Let $\left\lbrace P^{\nu} \right\rbrace_{\nu \geq 1} \subset  \mathcal{P}_{G}^{k,p}\left(M^{n}\right)$ be such that there exists a common refinement $\left\lbrace W_{\alpha}\right\rbrace_{\alpha \in J}$ for the associated covers. Let	$ A^{\nu} \in \mathcal{U}^{k,p}\left( P^{\nu}\right)$ for all $\nu \geq 1$ such that $\left\lVert F_{A^{\nu}}\right\rVert_{L^{\frac{n}{2}}\left(M^{n}; \Lambda^{2}T^{\ast}M^{n}\otimes \mathfrak{g} \right)}$ is uniformly bounded and the sequence $\left\lbrace \left\lvert F_{A^{\nu}}\right\rvert^{\frac{n}{2}} \right\rbrace_{\nu \geq 1}$ is equiintegrable in $M^{n}$, i.e. for every $\varepsilon> 0,$ there exists a $\delta > 0$ such that for any measurable subset $E \subset M^{n}$ with $\left\lvert E \right\rvert < \delta,$ we have 
\begin{align}
\int_{E} \left\lvert F_{A^{\nu}}\right\rvert^{\frac{n}{2}} \leq \varepsilon \qquad \text{ for every } \nu \geq 1. 
\end{align}
Then there exists a subsequence $\left\lbrace A^{\nu_{s}}\right\rbrace_{s \geq 1}$  and an integer $s_{0}$ such that for every $s_{1}, s_{2} \geq s_{0},$ we have 
$$\left[ P^{\nu_{s_{1}}}_{A^{\nu_{s_{1}}}}\right]_{W^{k,p}} = \left[ P^{\nu_{s_{2}}}_{A^{\nu_{s_{2}}}}\right]_{W^{k,p}}. $$
Moreover, there is a bundle $P^{\infty} = \left( \left\lbrace U^{\infty}_{i}\right\rbrace_{i \in I}, \left\lbrace g_{ij}^{\infty}\right\rbrace_{i,j \in I} \right) \in \mathcal{P}_{G}^{k,p}\cap \mathcal{P}_{G}^{0}\left(M^{n}\right)$ and a limit connection $A^{\infty}\in \mathcal{U}^{k,p}\left( P^{\infty}\right) $ such that 
$$\lim_{s \rightarrow 0}\left[ P^{\nu_{s}}_{A^{\nu_{s}}}\right]_{W^{k,p}} = \left[ P^{\infty}_{A^{\infty}}\right]_{W^{k,p}}. $$
Furthermore, for every $ i \in I$,
\begin{align*}
\left( A_{Coulomb}^{\nu_{s}}\right)_{i} &\rightharpoonup A_{i}^{\infty} &&\text{ weakly in } W^{1,\frac{n}{2}}\left(U^{\infty}_{i}; \Lambda^{1}T^{\ast}U^{\infty}_{i}\otimes\mathfrak{g}\right), \\
F_{A^{\nu_{s}}_{i}} &\rightharpoonup F_{A^{\infty}_{i}} &&\text{ weakly in } L^{\frac{n}{2}}\left(U^{\infty}_{i}; \Lambda^{2}T^{\ast}U^{\infty}_{i}\otimes\mathfrak{g}\right)
\end{align*} 
and for every $i,j \in I \text{ with } U^{\infty}_{i}\cap U^{\infty}_{j} \neq \emptyset,$
\begin{align*}
g_{ij}^{\nu_{s}} \rightarrow g_{ij}^{\infty} \qquad \text{ strongly in } W^{2,q}\cap C^{0}\left(U^{\infty}_{i}\cap U^{\infty}_{j}; G\right), 
\end{align*}
for some $\frac{n}{2} < q < n,$ where $g_{ij}^{\nu_{s}}$ are the transition maps for $P_{A^{\nu_{s}}_{Coulomb}}.$
\end{theorem}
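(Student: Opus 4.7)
The plan is to combine the Coulomb gauge construction (Lemma \ref{Coulombgauge}) with the critical elliptic estimate (Lemma \ref{ellipticCritical}) and the regularity of Coulomb bundles (Theorem \ref{regularity Coulomb}), using equiintegrability to make every choice uniform in $\nu$. The whole argument is a compactness argument, and the entry point is to fix one good refinement $\{V_j\}_{j\in J}$ of the common refinement $\{W_\alpha\}_{\alpha\in J}$ that works for every $\nu$ simultaneously.

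In Step 1 I would use the equiintegrability of $\{|F_{A^\nu}|^{n/2}\}$ to pick $\{V_j\}$ by sufficiently small geodesic balls so that $\|F_{A^\nu}\|_{L^{n/2}(V_j)}$ lies below both $\varepsilon_{Coulomb}$ of Lemma \ref{Coulombgauge} and $\varepsilon_{\Delta_{Cr}}/(4C_{Coulomb})$ of Lemma \ref{ellipticCritical}, for every $j\in J$ and every $\nu\ge 1$ at once; this is precisely the feature that a bare uniform $L^{n/2}$ bound does not supply. On each $V_j$ I then obtain Coulomb gauges $\rho^\nu_j$ and Coulomb representatives $A^{\nu,Coulomb}_j = (A^\nu_{\phi(j)})^{\rho^\nu_j}$ with uniform-in-$\nu$ bounds in $L^n\cap W^{1,n/2}$ via the Coulomb estimate. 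The Coulomb cocycles $h^\nu_{ij} = (\rho^\nu_i)^{-1} g^\nu_{\phi(i)\phi(j)}\rho^\nu_j$, defining $P^\nu_{A^\nu_{Coulomb}}$, then satisfy, exactly as in the proof of Theorem \ref{regularity Coulomb}, the critical elliptic system
\[
-\Delta h^\nu_{ij} = *\bigl[dh^\nu_{ij}\wedge (*A^{\nu,Coulomb}_j)\bigr] + *\bigl[(*A^{\nu,Coulomb}_i)\wedge dh^\nu_{ij}\bigr] \quad\text{in } V_{ij}.
\]
Compactness of $G$ gives pointwise boundedness of $h^\nu_{ij}$, and the identity $dh^\nu_{ij} = h^\nu_{ij} A^{\nu,Coulomb}_j - A^{\nu,Coulomb}_i h^\nu_{ij}$ upgrades this to a uniform $W^{1,n}$ bound. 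Lemma \ref{ellipticCritical} with $f=0$ then delivers a uniform-in-$\nu$ $W^{2,q}$ bound for any $n/2 < q < n$ on a slightly shrunken cover, which I relabel $\{V_j\}$.

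Rellich--Kondrachov, applied to the finitely many pairs $(i,j)$ with diagonal extraction, yields a subsequence $\nu_s$ and continuous limits $g^\infty_{ij}$ with $h^{\nu_s}_{ij}\to g^\infty_{ij}$ strongly in $W^{2,q'}\cap C^0$ for any $q' < q$. The cocycle relations pass to the a.e.\ limit, so $P^\infty = (\{V_j\},\{g^\infty_{ij}\}) \in \mathcal{P}^{k,p}_G\cap \mathcal{P}^0_G$. For $s$ large, $h^{\nu_s}_{ij}(g^\infty_{ij})^{-1}$ sits in a small uniform neighborhood of the identity on each $V_{ij}$, and a partition-of-unity / Uhlenbeck-type construction (as in the proof of Theorem \ref{smoothingC0bundle}) produces continuous gauges realizing a $C^0$-equivalence $P^{\nu_s}_{A^{\nu_s}_{Coulomb}} \stackrel{C^0}{\simeq} P^\infty$. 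Transitivity then gives the stabilization $[P^{\nu_{s_1}}_{A^{\nu_{s_1}}}]_{W^{k,p}} = [P^{\nu_{s_2}}_{A^{\nu_{s_2}}}]_{W^{k,p}} = [P^\infty_{A^\infty}]_{W^{k,p}}$ for all sufficiently large $s_1,s_2$.

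The weak convergences then fall out of the uniform $L^n\cap W^{1,n/2}$ bound on $A^{\nu,Coulomb}_i$ and the uniform $L^{n/2}$ bound on $F_{A^{\nu}_i}$; passing to weak limits of the gluing identity using the strong $C^0$ convergence of $h^{\nu_s}_{ij}$ identifies the limits $A^\infty_i$ as the local representatives of a genuine $\mathcal{U}^{k,p}$ connection $A^\infty$ on $P^\infty$, which is Coulomb by taking the distributional weak limit of $d^* A^{\nu_s,Coulomb}_i = 0$. The hardest step is the very first one: constructing the $\nu$-uniform good cover via equiintegrability, which is exactly where the hypothesis is used essentially and cannot be replaced by mere uniform $L^{n/2}$-boundedness (as the Freed--Uhlenbeck/Taubes examples alluded to in the introduction show). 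Once the uniform cover is in place, the rest reduces to elliptic bootstrap, Rellich compactness, and the stability of the $C^0$-equivalence class under uniform closeness of transition cocycles.
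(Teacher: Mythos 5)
Your architecture matches the paper's: use equiintegrability to fix one refinement of $\{W_\alpha\}$ on which $\|F_{A^\nu}\|_{L^{n/2}}$ is below the Coulomb and elliptic smallness thresholds \emph{uniformly in} $\nu$, pass to Coulomb gauges with the uniform estimate of Lemma \ref{Coulombgauge}, exploit the gluing identity to bound the Coulomb cocycles in $W^{1,n}$, extract limits, and finish with an Uhlenbeck--type (Corollary 3.3) gluing of close cocycles to get the $C^0$-equivalences and hence the stabilization of $\left[\cdot\right]_{W^{k,p}}$. Where you genuinely differ is in the middle: the paper first takes weak $W^{1,n}$/$W^{1,n/2}$ limits of $g^{\nu}_{ij}$ and $(A^{\nu}_{Coulomb})_i$, identifies the limit bundle, connection and curvature, and only afterwards upgrades to strong convergence by writing the critical elliptic equation for the \emph{difference} $u^{\nu_s}_{ij}=g^{\nu_s}_{ij}-g^{\infty}_{ij}$ and applying Lemma \ref{ellipticCritical} with inhomogeneous terms $dg^\infty_{ij}\wedge\ast\bigl[(A^{\nu_s}_{Coulomb})-A^\infty\bigr]$, which vanish in the limit by the strong $L^r$ convergence of the connections. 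You instead apply Lemma \ref{ellipticCritical} to each cocycle $h^{\nu}_{ij}$ directly, obtaining a $\nu$-uniform $W^{2,q}$ bound on a shrunken cover, and then invoke compactness. That shortcut is legitimate as far as it goes (the constant in Lemma \ref{ellipticCritical} is independent of the coefficient once its $L^n$ norm is below $\varepsilon_{\Delta_{Cr}}$, and your smallness choice guarantees this uniformly), and it delivers the strong $C^0$ convergence needed for all the topological conclusions more directly than the paper.

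There is, however, one concrete gap: Rellich--Kondrachov applied to a bounded sequence in $W^{2,q}$ gives strong convergence in $C^0$ and in $W^{1,r}$ for $r<\frac{nq}{n-q}$, but \emph{not} in $W^{2,q'}$ for $q'<q$ --- the embedding $W^{2,q}\hookrightarrow W^{2,q'}$ loses only integrability, not derivatives, and is not compact (second derivatives may oscillate and converge only weakly). So your sketch does not yet establish the strong $W^{2,q}\cap C^0$ convergence of the transition maps asserted in the theorem; it only yields weak $W^{2,q}$ plus strong $C^0$ (and $W^{1,r}$) convergence. To recover the full statement you need precisely the step the paper performs: since both $A^{\nu_s}_{Coulomb}$ and $A^\infty$ are Coulomb, the difference $u^{\nu_s}_{ij}$ solves a critical elliptic system whose right-hand side is controlled by $\|u^{\nu_s}_{ij}\|_{W^{1,2}}$ and by $\|dg^\infty_{ij}\|_{L^{np/(n-p)}}\,\|(A^{\nu_s}_{Coulomb})-A^\infty\|_{L^r}$ for suitably chosen exponents, and Lemma \ref{ellipticCritical} then shows $\|u^{\nu_s}_{ij}\|_{W^{2,q}}\to 0$ using the strong $L^r$ convergence of the Coulomb connections (itself a consequence of the uniform $W^{1,n/2}$ bound). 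The same strong $L^r$ convergence is also what you implicitly need, and should state, to identify the weak $L^{n/2}$ limit of $F_{A^{\nu_s}_i}$ with $F_{A^\infty_i}$, since the quadratic term $A\wedge A$ does not pass to the limit from uniform bounds alone. With those two additions your argument is complete and otherwise equivalent to the paper's.
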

\begin{remark}
	By Dunford-Pettis theorem, the hypothesis on curvatures are of course equivalent to the sequence $\left\lbrace \left\lvert F_{A^{\nu}}\right\rvert^{\frac{n}{2}} \right\rbrace_{\nu \geq 1}$ being weakly precompact in $L^{1}.$  
\end{remark}
\begin{remark}\label{smoothcasetopologyremark}
	(i) Note that even if $P^{\nu}=P$ for all $\nu \geq 1,$ where $P \in \mathcal{P}_{G}^{k,p}\left(M^{n}\right) \cap \mathcal{P}_{G}^{0}\left(M^{n}\right),$ still $\left[P^{\infty} \right]_{C^{0}}$ may not be the same as $\left[P \right]_{C^{0}}.$  Since it is perfectly possible that $\left[P \right]_{C^{0}} \neq \left[ P_{A^{\nu}_{Coulomb}} \right]_{C^{0}}$ for infinitely many $\nu \geq 1.$ Thus in this generality, the only conclusion that we can reasonably expect is that up to a subsequence, $\left[ P_{A^{\nu}_{Coulomb}} \right]_{C^{0}}$ stabilizes to one isomorphism class and the limit is also in the same class. This is exactly what the theorem claims. \smallskip 
	
	(ii) On the other hand, by Theorem \ref{topology notions coincide}, if in addition,  $d^{\ast}A^{\nu} \in L^{\left(\frac{n}{2}, 1\right)}$ for every $\nu \geq 1,$ then  we deduce that $\left[ P_{A^{\nu}_{Coulomb}} \right]_{C^{0}}= \left[ P \right]_{C^{0}}$ for every $\nu \geq 1.$ Then the stabilization conclusion is trivial and the real content of the theorem is that $\left[ P_{\infty} \right]_{C^{0}}= \left[ P \right]_{C^{0}}$.   
\end{remark}
\begin{remark}
	Note that the condition about the existence of a common trivialization is actually an important subtle point. For an arbitrary sequence of bundles $\left\lbrace P^{\nu}\right\rbrace_{\nu \geq 1}$ over $M^{n},$ for any $x \in M^{n},$ for each $\nu \geq 1,$ we can find $U_{x}^{\nu} \subset M^{n},$ a neighborhood of $x$ such that $P^{\nu}$ is trivialized over $U_{x}^{\nu}.$ But if $\operatorname{diam}\left( U_{x}^{\nu}\right) \rightarrow 0 $ as $\nu \rightarrow \infty,$ there is no neighborhood of $x$ in $M^{n}$ over which all the bundles are trivialized simultaneously. It is not clear if this can be ruled out in general, so the condition is simply an explicit assumption to rule this situation out.    
\end{remark}
\begin{proof}
	By equiintegrability, we can find a cover $\left\lbrace V^{\infty}_{i} \right\rbrace_{i \in I}$ of $M^{n}$, which is refinement for the cover $\left\lbrace W_{\alpha}\right\rbrace_{\alpha \in J}$ and we have, for all $\nu \geq 1,$ 
	\begin{align}\label{choice}
	\left\lVert F_{A^{\nu}} \right\rVert_{L^{\frac{n}{2}}\left( V^{\infty}_{i}; \Lambda^{2}T^{\ast}V^{\infty}_{i}\otimes\mathfrak{g}\right)} \leq \min \left\lbrace \varepsilon_{Coulomb} , \frac{\varepsilon_{\Delta_{Cr}}}{4C_{Coulomb}} \right\rbrace  \quad \text{ for all } i \in I. 
	\end{align}
	Thus, $\left\lbrace V^{\infty}_{i} \right\rbrace_{i \in I}$ is a common fixed cover for the Coulomb bundles $P^{\nu}_{A^{\nu}_{Coulomb}}$ for every $\nu \geq 1.$ Denoting the transition functions of $P^{\nu}_{A^{\nu}_{Coulomb}}$ by $g^{\nu}_{ij},$ we have the Coulomb condition and the gluing relations for the Coulomb bundles, 
	\begin{align}
	&dg^{\nu}_{ij} = g^{\nu}_{ij}\left( A^{\nu}_{Coulomb}\right)_{j}  - \left( A^{\nu}_{Coulomb}\right)_{i} g^{\nu}_{ij} &&\text{ in } V^{\infty}_{i}\cap V^{\infty}_{j} \label{gluing relation nu} \\
	&d^{\ast}\left( A^{\nu}_{Coulomb}\right)_{i} = 0  \qquad &&\text{ in } V^{\infty}_{i} \label{Coulomb nu}
	\end{align} 
	for every $ i,j \in I$ with $V^{\infty}_{i}\cap V^{\infty}_{j} \neq \emptyset$ and for every $i \in I$ respectively. Also, we have the estimate 
	\begin{align}\label{common coulomb estimate}
	\left\lVert  \left( A^{\nu}_{Coulomb}\right)_{i}\right\rVert_{W^{1,\frac{n}{2}}\left( V^{\infty}_{i}; \Lambda^{2}T^{\ast}V^{\infty}_{i}\otimes\mathfrak{g}\right)}   \leq C_{Coulomb}\left\lVert F_{A^{\nu}}\right\rVert_{L^{\frac{n}{2}}\left( V^{\infty}_{i}; \Lambda^{2}T^{\ast}V^{\infty}_{i}\otimes\mathfrak{g}\right)} 
	\end{align} 
	every $i \in I.$ Combining \eqref{gluing relation nu} and \eqref{common coulomb estimate} and recalling that $G$ is compact, we deduce, 
	\begin{multline*}
	\left\lVert dg^{\nu}_{ij} \right\rVert_{L^{n}\left( V^{\infty}_{i}\cap V^{\infty}_{j}; G \right)} \\ \leq C \left( \left\lVert F_{A^{\nu}}\right\rVert_{L^{\frac{n}{2}}\left( V^{\infty}_{i}; \Lambda^{2}T^{\ast}V^{\infty}_{i}\otimes\mathfrak{g}\right)}  + \left\lVert F_{A^{\nu}}\right\rVert_{L^{\frac{n}{2}}\left( V^{\infty}_{j}; \Lambda^{2}T^{\ast}V^{\infty}_{i}\otimes\mathfrak{g}\right)} \right)
	\end{multline*}
	Since $G$ is compact, this implies $\left\lVert g^{\nu}_{ij} \right\rVert_{W^{1, n}\left( V^{\infty}_{i}\cap U^{\infty}_{j}; G \right)}$ is uniformly bounded. Thus, there exists a subsequence which converges weakly in $W^{1,n}.$ Using \eqref{common coulomb estimate} and extracting a further subsequence, we can assume that 
	\begin{align}
	g^{\nu_{s}}_{ij} &\rightharpoonup g^{\infty}_{ij} &&\text{ weakly in } W^{1,n}\left( V^{\infty}_{i}\cap V^{\infty}_{j}; G \right), \label{weakgijW1n}\\
	\left( A^{\nu_{s}}_{Coulomb}\right)_{i} &\rightharpoonup A_{i}^{\infty} &&\text{ weakly in } W^{1,\frac{n}{2}}\left(V^{\infty}_{i}; \Lambda^{1}T^{\ast}V^{\infty}_{i}\otimes\mathfrak{g}\right), \label{weakconnectionW1n2}
	\end{align}
	as $s \rightarrow 0$ for every $i,j.$ By compactness of the Sobolev embedding, up to the extraction of a further subsequence which we do not relabel, \eqref{weakgijW1n} implies 
	\begin{align}\label{strong convergence of gauges}
	g^{\nu_{s}}_{ij} \rightarrow g^{\infty}_{ij}\quad \text{ strongly in } L^{q} \text{ for every } q < \infty 
	\end{align} and since the maps $g^{\nu_{s}}_{ij}$ satisfy the cocycle conditions, passing to the limit we deduce that the maps $g^{\infty}_{ij}$ satisfy the cocycle conditions as well and thus they define a $W^{1,n}$ bundle $P^{\infty}$. Using compactness of the Sobolev embedding again, up to the extraction of a further subsequence which we do not relabel, \eqref{weakconnectionW1n2} implies 
	\begin{align}\label{strong convergence of connections}
	\left( A^{\nu_{s}}_{Coulomb}\right)_{i} \rightarrow A_{i}^{\infty}\quad \text{ strongly in } L^{r} \text{ for every } 1 \leq r < n 
	\end{align} for every $i$ and thus, we have 
	\begin{align*}
	g^{\nu_{s}}_{ij}\left( A^{\nu_{s}}_{Coulomb}\right)_{j}  - \left( A^{\nu_{s}}_{Coulomb}\right)_{i} g^{\nu_{s}}_{ij} \rightarrow  g^{\infty}_{ij}A_{i}^{\infty}  - A_{j}^{\infty} g^{\infty}_{ij} \quad \text{ in } L^{s}
	\end{align*}
	for every $s < n.$ Combining with \eqref{gluing relation nu} and \eqref{weakgijW1n}, this implies that the gluing relations 
	\begin{align}\label{gluing infty}
	dg^{\infty}_{ij} = g^{\infty}_{ij}A^{\infty}_{j}  - A^{\infty}_{i} g^{\infty}_{ij} \qquad \text{ in } V^{\infty}_{i}\cap V^{\infty}_{j}
	\end{align} 
	holds in the sense of distributions and pointwise a.e. for every $i,j$ with $V^{\infty}_{i}\cap V^{\infty}_{j} \neq \emptyset.$ Thus the local representatives $\left\lbrace A^{\infty}_{i}\right\rbrace_{i \in I}$ patch together to yield a global connection form $A^{\infty}$ on $P^{\infty}. $ Note that by \eqref{strong convergence of connections}, we also have 
	$$\left( A^{\nu_{s}}_{Coulomb}\right)_{i}\wedge \left( A^{\nu_{s}}_{Coulomb}\right)_{i} \rightarrow A_{i}^{\infty}\wedge A_{i}^{\infty}\quad \text{ strongly in } L^{\frac{r}{2}} \text{ for every } 2 \leq r < n  $$ for every $i.$ Since \eqref{weakconnectionW1n2} implies that $\left( A^{\nu_{s}}_{Coulomb}\right)_{i}\wedge \left( A^{\nu_{s}}_{Coulomb}\right)_{i}$ is uniformly bounded in $L^{\frac{n}{2}},$ by uniqueness of weak limits, we deduce 
	$$ \left( A^{\nu_{s}}_{Coulomb}\right)_{i}\wedge \left( A^{\nu_{s}}_{Coulomb}\right)_{i} \rightharpoonup A_{i}^{\infty}\wedge A_{i}^{\infty}\quad \text{ weakly in } L^{\frac{n}{2}}.$$ Combining this with \eqref{weakconnectionW1n2}, we obtain 
	$$F_{A^{\nu_{s}}_{i}} \rightharpoonup F_{A^{\infty}_{i}} \qquad \text{ weakly in } L^{\frac{n}{2}}. $$ By \eqref{Coulomb nu} and \eqref{weakconnectionW1n2}, we deduce that $A^{\infty}$ is Coulomb and thus, up to shrinking the domains which we do not rename, by Theorem \ref{regularity Coulomb}, we can assume that  $g^{\infty}_{ij}$ are $W_{loc}^{2,p}$ in $V^{\infty}_{i}\cap V^{\infty}_{j}$ for every $\frac{n}{2}< p < n$ and every $i,j$ with $V^{\infty}_{i}\cap V^{\infty}_{j} \neq \emptyset.$ Now from \eqref{gluing infty} and \eqref{gluing relation nu}, we deduce that the equation  
	\begin{multline}\label{d of diff}
	d\left( g^{\nu_{s}}_{ij} - g^{\infty}_{ij}\right) = \left( g^{\nu_{s}}_{ij} - g^{\infty}_{ij}\right)\left( A^{\nu_{s}}_{Coulomb}\right)_{j} - \left( A^{\nu_{s}}_{Coulomb}\right)_{i}\left( g^{\nu_{s}}_{ij} - g^{\infty}_{ij}\right) 
	\\ + g^{\infty}_{ij}\left[ \left( A^{\nu_{s}}_{Coulomb}\right)_{j} - A^{\infty}_{j}\right] -  \left[ \left( A^{\nu_{s}}_{Coulomb}\right)_{i} - A^{\infty}_{i}\right]g^{\infty}_{ij}
	\end{multline}
	holds in $V^{\infty}_{i}\cap V^{\infty}_{j}$ whenever the intersection is non-empty. Since $A^{\nu_{s}}_{Coulomb}$ and $A^{\infty}$ are both Coulomb, we deduce the equation
	\begin{multline}\label{laplacian of the diff}
	-\Delta u^{\nu_{s}}_{ij} = \ast \left[ d u^{\nu_{s}}_{ij} \wedge  \ast \left( A^{\nu_{s}}_{Coulomb}\right)_{j}\right] + \ast \left[  \ast \left( A^{\nu_{s}}_{Coulomb}\right)_{i} \wedge du^{\nu_{s}}_{ij}\right] \\
	 +\ast \left[ d g^{\infty}_{ij} \wedge  \ast \left[ \left( A^{\nu_{s}}_{Coulomb}\right)_{j} - A^{\infty}_{j}\right]\right] \\ + \ast \left[  \ast \left[ \left( A^{\nu_{s}}_{Coulomb}\right)_{i} - A^{\infty}_{i}\right] \wedge dg^{\infty}_{ij}\right] 
	\end{multline} 
	in $V^{\infty}_{i}\cap V^{\infty}_{j}$ whenever the intersection is non-empty, where $u^{\nu_{s}}_{ij}=g^{\nu_{s}}_{ij} - g^{\infty}_{ij}.$
	Now we choose exponents $\frac{n}{2} < p < n$ and $ 1 < r < n$ such that $\frac{1}{n} < \frac{1}{r} + \frac{n-p}{np} < \frac{2}{n}.$ Note that since $p > \frac{n}{2}$ implies $\frac{np}{n-p}> n,$ such a choice of $r$ is possible. Now, by slightly shrinking the open sets $V^{\infty}_{i},$ we can find open sets $\tilde{U}^{\infty}_{i}, \tilde{\tilde{U}}^{\infty}_{i}$ such that $\left\lbrace \tilde{U}^{\infty}_{i}\right\rbrace_{i \in I} $ is still a cover for $M^{n}$ and for each $i \in I,$ we have, 
	 $ \tilde{U}^{\infty}_{i}\subset \subset \tilde{\tilde{U}}^{\infty}_{i}\subset \subset V^{\infty}_{i}. $ Now considering the equation \eqref{laplacian of the diff} in $\tilde{\tilde{U}}^{\infty}_{i}\cap \tilde{\tilde{U}}^{\infty}_{j}$, whenever the intersection is non-empty and recalling that $g^{\infty}_{ij}$ is $W^{2,p}$ in $\tilde{\tilde{U}}^{\infty}_{i}\cap \tilde{\tilde{U}}^{\infty}_{j},$ using Lemma \ref{ellipticCritical}, by our choice in \eqref{choice}, we deduce the estimate 
	 \begin{multline}
	 \left\lVert u^{\nu_{s}}_{ij}\right\rVert_{W^{2,q}\left( \tilde{U}^{\infty}_{i}\cap \tilde{U}^{\infty}_{i}; G\right)} \leq C \left\lVert u^{\nu_{s}}_{ij}\right\rVert_{W^{1,2}\left( \tilde{\tilde{U}}^{\infty}_{i}\cap \tilde{\tilde{U}}^{\infty}_{j}\right)} \\ + C \left\lVert d^{\ast}g^{\infty}_{ij}\right\rVert_{L^{\frac{np}{n-p}}\left( \tilde{\tilde{U}}^{\infty}_{i}\cap\tilde{\tilde{U}}^{\infty}_{j}\right)} \left\lVert \left( A^{\nu_{s}}_{Coulomb}\right)_{i} - A^{\infty}_{i} \right\rVert_{L^{r}\left( \tilde{\tilde{U}}^{\infty}_{i}\right)} \\ + C \left\lVert d^{\ast}g^{\infty}_{ij}\right\rVert_{L^{\frac{np}{n-p}}\left( \tilde{\tilde{U}}^{\infty}_{i}\cap\tilde{\tilde{U}}^{\infty}_{j}\right)}\left\lVert \left( A^{\nu_{s}}_{Coulomb}\right)_{j} - A^{\infty}_{j} \right\rVert_{L^{r}\left( \tilde{\tilde{U}}^{\infty}_{j}\right)},
	 \end{multline}
	 where $n > q = \frac{npr}{np + r(n-p)} > \frac{n}{2}.$
	 Now \eqref{strong convergence of connections} implies that the last two terms on the right hand side of the estimate above converges to zero as $s \rightarrow \infty.$ The first term also converges to zero by  \eqref{strong convergence of gauges},\eqref{strong convergence of connections} and \eqref{d of diff}. Thus, by Sobolev embedding, we obtain 
	 \begin{align*}
	 \left\lVert g^{\nu_{s}}_{ij} - g^{\infty}_{ij}\right\rVert_{C^{0}\left( \overline{ \tilde{U}^{\infty}_{i}\cap \tilde{U}^{\infty}_{j}}; G\right)} \rightarrow 0 \text{ as } s \rightarrow \infty
	 \end{align*} for each $i,j \in I$ with $\tilde{U}^{\infty}_{i}\cap \tilde{U}^{\infty}_{j}\neq \emptyset.$ By Corollary 3.3 in \cite{UhlenbeckGaugefixing}, we deduce the existence of
	 a smaller cover $\left\lbrace U^{\infty}_{i}\right\rbrace_{i \in I}$ and gauge changes $\sigma_{i} \in W^{2,q}\left( U^{\infty}_{i}; G \right)$ satisfying $$ g^{\infty}_{ij}= \sigma_{i}^{-1} g^{\nu_{s_{0}}}_{ij} \sigma_{j} \qquad \text{ in } U^{\infty}_{i}\cap U^{\infty}_{j},$$
	 whenever the intersection is non-empty, for some integer $s_{0}$ large enough. This proves the result.  
\end{proof}
As an immediate consequence, we obtain Theorem \ref{improved strong convergence}. 
\begin{proof}\textbf{( of Theorem \ref{improved strong convergence} )}
	In light of the discussions in Remark \ref{smoothcasetopologyremark} (ii), this is just a restatement of Theorem \ref{equiintegrability theorem} in this case. 
\end{proof}  
We now prove that $\frac{n}{2}$-Yang-Mills energy can detect \emph{topological flatness} in a $W^{k,p}$ principal $G$-bundle equipped with a $\mathcal{U}^{k,p}$ connection for $kp=n$. 
\begin{theorem}[flatness criterion]\label{flatness in W1n bundles}
Let $kp=n.$ For any cover $\mathcal{U}$ of $M^{n},$ there exists a constant $\delta >0,$ depending only on $\mathcal{U}$, $M^{n}$ and $G$ such that if $ P $ is a $W^{k,p}$ bundle trivialized over $\mathcal{U}$ and $A$ is a $\mathcal{U}^{k,p}$ connection on $P,$ then either $YM_{n/2}\left(A \right)  > \delta , $  or $\left[ P_{A}\right]_{W^{1,n}} = \left[P^{0}\right]_{C^{0}}$, where $P^{0}$ is a flat bundle.
\end{theorem}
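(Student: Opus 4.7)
The plan is a compactness argument by contradiction, using Theorem~\ref{equiintegrability theorem} on a sequence of pairs whose Yang-Mills energies tend to zero. I would suppose the conclusion fails for some cover $\mathcal{U}$: then for each $\nu\geq 1$ I can select a $W^{k,p}$ bundle $P^\nu$ trivialized over $\mathcal{U}$ together with a connection $A^\nu\in\mathcal{U}^{k,p}(P^\nu)$ with $YM_{n/2}(A^\nu)\leq 1/\nu$, yet $[P^\nu_{A^\nu}]_{W^{k,p}}\neq[P^0]_{C^0}$ for any flat bundle $P^0$. The sequence $\{(P^\nu,A^\nu)\}$ meets the hypotheses of Theorem~\ref{equiintegrability theorem}: the common trivializing cover is $\mathcal{U}$; the $L^{n/2}$-norms $\|F_{A^\nu}\|_{L^{n/2}}$ are uniformly bounded (indeed tend to $0$); and equiintegrability of $\{|F_{A^\nu}|^{n/2}\}$ is automatic because the total $L^1$-masses tend to $0$ uniformly in $\nu$.

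Theorem~\ref{equiintegrability theorem} then furnishes a subsequence $\{\nu_s\}$, an integer $s_0$, a limit $W^{k,p}\cap C^0$ bundle $P^\infty$, and a Coulomb limit connection $A^\infty\in\mathcal{U}^{k,p}(P^\infty)$ such that
\[
[P^{\nu_s}_{A^{\nu_s}}]_{W^{k,p}}=[P^\infty_{A^\infty}]_{W^{k,p}}\quad\text{for all }s\geq s_0,
\]
together with weak convergence $F_{A^{\nu_s}_i}\rightharpoonup F_{A^\infty_i}$ in $L^{n/2}$ on each chart. Since $\|F_{A^{\nu_s}}\|_{L^{n/2}}\to 0$ by construction, the weak limit must satisfy $F_{A^\infty}\equiv 0$, so $A^\infty$ is a weakly flat Coulomb connection on the $W^{k,p}\cap C^0$ bundle $P^\infty$.

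The remaining task is to identify $(P^\infty,A^\infty)$ with a smooth flat bundle $P^0$. In each chart, combining $d^*A_i^\infty=0$ with $dA_i^\infty=-A_i^\infty\wedge A_i^\infty$ (and using the Coulomb condition to eliminate the divergence-free part of $A^\infty\wedge A^\infty$) yields the critical elliptic system $\Delta A_i^\infty = A_i^\infty\cdot\nabla A_i^\infty$; after shrinking charts so that $\|A_i^\infty\|_{L^n}$ lies below the threshold $\varepsilon_{\Delta_{Cr}}$, a standard bootstrap based on Lemma~\ref{ellipticCritical} (taking $f=0$) promotes $A^\infty$ to a smooth connection, and the gluing relations then upgrade the transition maps of $P^\infty$ to smooth maps. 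Hence $P^\infty$ is a smooth flat $G$-bundle $P^0$. Since $d^*A^\infty=0\in L^{(n/2,1)}$, Theorem~\ref{topology notions coincide} applies and gives $[P^\infty_{A^\infty}]_{W^{k,p}}=[P^\infty]_{C^0}=[P^0]_{C^0}$, so the stabilization forces $[P^{\nu_s}_{A^{\nu_s}}]_{W^{k,p}}=[P^0]_{C^0}$ for $s\geq s_0$, contradicting the choice of the sequence. The main obstacle I anticipate is executing the regularity bootstrap cleanly in the borderline dimension $n=3$ (where the initial regularity $W^{1,3/2}$ sits just below $W^{1,2}$ needed by Lemma~\ref{ellipticCritical}) and requires a preliminary Lorentz-space upgrade before the standard iteration takes over; but once $A^\infty$ is placed in $W^{1,2}_{\mathrm{loc}}$ the remainder of the bootstrap is routine and already implicit in the machinery of Section~\ref{elliptic estimates density}.
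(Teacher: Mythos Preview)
Your overall strategy---contradiction plus Theorem~\ref{equiintegrability theorem}---matches the paper's, and the argument is essentially correct. The difference lies in how you conclude that $P^{\infty}$ is flat. The paper takes a much shorter route: since the Coulomb gauge estimate (Lemma~\ref{Coulombgauge}) gives
\[
\left\lVert (A^{\nu_s}_{Coulomb})_i \right\rVert_{W^{1,n/2}} \leq C_{Coulomb}\left\lVert F_{A^{\nu_s}} \right\rVert_{L^{n/2}} \to 0,
\]
the Coulomb connections converge \emph{strongly} to zero, hence $A^{\infty}_i \equiv 0$ on each chart. The gluing relation $dg^{\infty}_{ij} = g^{\infty}_{ij}A^{\infty}_j - A^{\infty}_i g^{\infty}_{ij}$ then immediately gives $dg^{\infty}_{ij}=0$, so the transition functions are locally constant and $P^{\infty}$ is flat. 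No bootstrap is needed, and the $n=3$ issue you flag never arises. Your detour through elliptic regularity and Theorem~\ref{topology notions coincide} is valid in principle (and Theorem~\ref{topology notions coincide} is in any case superfluous, since $A^{\infty}$ is already Coulomb on $P^{\infty}$, so $[P^{\infty}_{A^{\infty}}]_{W^{k,p}} = [P^{\infty}]_{C^0}$ by definition), but it introduces a technical hurdle that the paper's direct observation completely avoids.
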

\begin{proof}
	Since for $kp=n,$ every $W^{k,p}$ bundle is also a $W^{1,n}$ bundle and every $\mathcal{U}^{k,p}$ connection is also a $\mathcal{U}^{1,n}$ connection, it is enough to prove for $k=1.$ If the result is false, there for every $\nu \geq 1,$ there exists a $W^{1,n}$ bundle $P^{\nu}$ trivialized over $\mathcal{U}$ with a $\mathcal{U}^{1,n}$ connection $A^{\nu} \in \mathcal{U}^{1,n}\left(P^{\nu}\right)$ such that $\left[ P^{\nu}_{A^{\nu}}\right]_{W^{1,n}} \neq \left[P^{0}\right]_{C^{0}}$ for any flat bundle $P^{0}$ and $F_{A^{\nu}} \rightarrow 0$ in $L^{\frac{n}{2}}.$ Since the strong convergence of the curvatures in $L^{\frac{n}{2}}$ implies the equiintegrability, applying Theorem \ref{equiintegrability theorem}, we deduce that $\left[ P^{\nu_{s}}_{A^{\nu_{s}}}\right]_{W^{1,n}} = \left[ P^{\infty}_{A^{\infty}}\right]_{W^{1,n}},$ for $s$ large enough. But it is easy to see that by the strong convergence of the curvatures in $L^{\frac{n}{2}}$ to zero, we have $dg_{ij}^{\infty} = 0$ in $U^{\infty}_{i}\cap U^{\infty}_{j},$ for every $i,j \in I$ with $U^{\infty}_{i}\cap U^{\infty}_{j} \neq \emptyset.$ This means $P^{\infty}$ is a flat bundle. This contradiction proves the theorem.    
\end{proof}
\noindent Combining Theorem \ref{flatness in W1n bundles} with Theorem \ref{topology notions coincide}, we immediately obtain Theorem \ref{energy gap}. 
\begin{appendices}
	\section{$G$-valued Sobolev maps}\label{Gvaluedmaps}  Without loss of generality, we can always assume that the compact finite dimensional Lie group $G$ is endowed with a bi-invariant metric and is smoothly embedded isometrically in $\mathbb{R}^{N_{0}}$ for some, possibly quite large, integer $N_{0} \geq 1.$ By compactness of $G,$ there exists a constant $C_{G} > 0$ such that $G \subset \subset B_{C_{G}}(0) \subset \mathbb{R}^{N_{0}}.$ 
		
		\begin{definition}
			Let $U \subset \mathbb{R}^{n}$ be open. The  space $W^{k,p}\left( U ; G \right)$ is defined as 
			$$W^{k,p}\left( U ; G \right) : = \left\lbrace  f \in W^{k,p}\left( U ; \mathbb{R}^{N_{0}}\right) : f(x) \in G \text{ for a.e. } x \in U  \right\rbrace.  $$
		\end{definition}
		The compactness of $G$ implies that $W^{k,p}\left( U ; G \right) \subset L^{\infty}\left( U ; \mathbb{R}^{N_{0}} \right)$  with the bounds $\left\lVert f \right\rVert_{L^{\infty}\left( U ; \mathbb{R}^{N_{0}} \right)} \leq C_{G}$ for any $f \in W^{k,p}\left( U ; G \right).$ By the Gagliardo-Nirenberg inequality, it follows that $W^{k,p}\left( U ; G \right)$ is an infinite dimensional topological group with respect to the topology in inherits as a topological subspace of the  Banach space  $W^{k,p}\left( U ; \mathbb{R}^{N_{0}}\right)$. Note that $W^{k,p}\left( U ; G \right)$ is not even a linear space, so there is no question of a norm. It inherits only a topology from the norm topology of $W^{k,p}\left( U ; \mathbb{R}^{N_{0}}\right).$ 
		
		On the other hand, since the Lie algebra of $G$, i.e. $\mathfrak{g}$ is a linear space and consequently so is $\Lambda^{k}\mathbb{R}^{n}\otimes\mathfrak{g}$ for any $0 \leq k \leq n,$ the space of $\mathfrak{g}$-valued $k$-forms of class $W^{k,p}$ is defined by requiring each scalar component of the maps to be $W^{k,p}$ functions in the usual sense. The standard properties of Sobolev functions, including smooth approximation by mollification, carry over immediately to this setting by arguing componentwise. The stark contrast between the two settings is due to the fact that in general a map $ g \in W^{k,p}\left( U; G\right)$ need not have a $W^{k,p}$ `lift' to the Lie algebra. More precisely, there need not exist a map $ u \in W^{k,p}\left( U; \mathfrak{g}\right)$ with the property that $g = \operatorname{exp}\left( u \right),$ where $\operatorname{exp}: \mathfrak{g} \rightarrow G$ is the exponential map of $G.$ However, $\mathfrak{g} = T_{\mathbf{1}_{G}}G$ and there exists a small enough $C^{0}$-neighborhood of the identity element $\mathbf{1}_{G} \in G$ in $G$ such the exponential map is a local smooth diffeomorphism onto that neighborhood. We shall use this fact crucially and repeatedly, so we fix some notations.  
		\begin{notation}
			Let $G$ be a compact finite dimensional Lie group and $\mathcal{O}_{G} \subset G$ be a neighborhood of the identity in $G$ which is contained in the domain of the inverse of the exponential map, i.e. $ \mathbf{1}_{G} \in \mathcal{O}_{G} \subset  \operatorname{Dom}(\operatorname{exp}^{-1}),$ such that $\operatorname{exp}^{-1}\left( \mathcal{O}_{G}\right) \subset \mathfrak{g}$ is a convex set containing the origin, i.e. $0 \in \operatorname{exp}^{-1}\left( \mathcal{O}_{G}\right) \subset \mathfrak{g}$ is convex.
		\end{notation} 
	Now we prove a few lemmas for $kp >n,$ which are $W^{k,p}$-analogues of classical results about $G$-valued continuous maps. All subsets of $\mathbb{R}^{n}$ are always assumed to be at least Lipschitz sets. 
	\begin{lemma}\label{patching}
		Let $d_{0}>0$ be a real number and $kp > n.$ Let $U \subset \mathbb{R}^{n}$ be open, bounded, convex and let $A, B \subset U$ be two closed convex subsets such that $B \subset \subset A$ and $\operatorname{dist}\left( B; \partial A \right)> d_{0}.$ Then given any \emph{continuous} map $F \in W^{k,p} \left( A ; \mathcal{O}_{G}\right) ,$
		there exists a \emph{continuous} map $\tilde{F}\in W^{k,p} \left( U ; \mathcal{O}_{G}\right)$ such that $\tilde{F} = F$ on a neighborhood of $B$ and $\tilde{F} = \mathbf{1}_{G}$ on a neighborhood of $U\setminus \operatorname{int}\left( A\right).$ Moreover, there exists a constant $C_{1} = C_{1}\left( d_{0}, n, k, p, G \right) \geq 1,$ nonincreasing with $d_{0},$ such that we have the estimate
		\begin{equation*}
		\left\lVert \operatorname{exp}^{-1} \left( \tilde{F}\right)  \right\rVert_{W^{k,p}\left( A; \mathfrak{g}\right)} \leq C_{1}\left\lVert \operatorname{exp}^{-1} \left( F\right) \right\rVert_{W^{k,p}\left( A; \mathfrak{g}\right)}.
		\end{equation*}
	\end{lemma}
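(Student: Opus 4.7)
The natural approach is to lift the problem to the Lie algebra $\mathfrak g$, where linear patching makes sense, perform a smooth cutoff there, and exponentiate back to $G$. First, set $f:=\exp^{-1}\circ F:A\to\exp^{-1}(\mathcal O_G)\subset\mathfrak g$, which is well defined since $F$ takes values in $\mathcal O_G$. Continuity of $f$ is immediate from continuity of $F$ and smoothness of $\exp^{-1}$ on a neighborhood of the range; and since $kp>n$ gives $F\in C^0\cap W^{k,p}$, the standard chain-rule composition bounds for Sobolev maps (with $\exp^{-1}$ having uniformly bounded $C^k$ norm on the compact set $F(A)\subset\subset\mathcal O_G$) yield $f\in W^{k,p}(A;\mathfrak g)$ with norm comparable to that of $F$.

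Next, pick a cutoff $\chi\in C_c^\infty(U)$ taking values in $[0,1]$, equal to $1$ on a neighborhood of $B$ and vanishing outside a slightly shrunk copy of $A$; the hypothesis $\operatorname{dist}(B,\partial A)>d_0$ leaves room to arrange $\|D^j\chi\|_{L^\infty}\le C(n)d_0^{-j}$ for $0\le j\le k$. Define $\tilde f:=\chi f$ on $A$, extended by $0$ to $U\setminus A$. This extension is continuous and $W^{k,p}$ on $U$ because $\chi$ is compactly supported inside $\operatorname{int}(A)$. The crucial observation is that $\tilde f(x)\in\exp^{-1}(\mathcal O_G)$ for every $x$: indeed, since $\exp^{-1}(\mathcal O_G)$ is convex and contains $0$, and $\chi(x)\in[0,1]$,
\begin{equation*}
\tilde f(x)=(1-\chi(x))\cdot 0+\chi(x)\cdot f(x)\in\exp^{-1}(\mathcal O_G).
\end{equation*}
Set $\tilde F:=\exp\circ\tilde f:U\to\mathcal O_G$; the chain rule again (with $\exp$ smooth on $\overline{\exp^{-1}(\mathcal O_G)}$) puts $\tilde F$ in $W^{k,p}(U;G)$. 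Where $\chi\equiv1$ we have $\tilde F=\exp\circ f=F$, and where $\chi\equiv0$ we have $\tilde F=\exp(0)=\mathbf 1_G$, giving the two required matching conditions.

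For the estimate, injectivity of $\exp$ on $\exp^{-1}(\mathcal O_G)$ gives $\exp^{-1}(\tilde F)=\tilde f=\chi f$, so the Leibniz rule yields
\begin{equation*}
\|\exp^{-1}(\tilde F)\|_{W^{k,p}(A;\mathfrak g)}=\|\chi f\|_{W^{k,p}(A;\mathfrak g)}\le C\max_{0\le j\le k}\|D^j\chi\|_{L^\infty}\,\|f\|_{W^{k,p}(A;\mathfrak g)}\le C_1\|f\|_{W^{k,p}(A;\mathfrak g)},
\end{equation*}
with $C_1=C_1(d_0,n,k,p,G)$ absorbing the factors $d_0^{-j}$ (hence nonincreasing in $d_0$), which is the claimed inequality. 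The procedure is essentially the standard cutoff-and-multiply trick, so there is no real obstacle; the only point requiring care is the verification that $\chi f$ never leaves $\exp^{-1}(\mathcal O_G)$, and this is precisely why the convexity of $\exp^{-1}(\mathcal O_G)$ and the inclusion $0\in\exp^{-1}(\mathcal O_G)$ were built into the notational setup of $\mathcal O_G$.
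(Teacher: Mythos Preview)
Your proof is correct and follows essentially the same route as the paper: lift $F$ to $\mathfrak g$ via $\exp^{-1}$, multiply by a smooth cutoff supported in $\operatorname{int}(A)$ and equal to $1$ near $B$, then exponentiate back, using convexity of $\exp^{-1}(\mathcal O_G)$ and the fact that it contains $0$ to ensure the cutoff stays in the right set. Your write-up is in fact slightly more explicit than the paper's about the Leibniz estimate and the dependence of $C_1$ on $d_0$ through the derivative bounds of the cutoff.
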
	
	\begin{proof}
		Since $B$ and $U\setminus \operatorname{int}\left( A\right)$ are disjoint, one can construct a smooth map $\psi: U \rightarrow \left[0,1\right]$ such that $\psi \equiv 0$ in a neighborhood of $U\setminus \operatorname{int}\left( A\right)$ and $\psi \equiv 1$ in a neighborhood of $B.$ Then we set $$ F_{\psi}(x):= \operatorname{exp}\left( \psi(x)\operatorname{exp}^{-1}\left[ F(x)\right]\right) \qquad \text{ for all } x \in A.$$ Clearly, $F_{\psi}$ takes values in $\mathcal{O}_{G}$ by convexity of $\operatorname{exp}^{-1}\left( \mathcal{O}_{G}\right).$ But since $F_{\psi} \equiv \mathbf{1}_{G}$ near the boundary of $A,$ the map 
		\begin{align*}
		\tilde{F}(x):= \left\lbrace \begin{aligned}
		&F_{\psi}(x) &&\text{ if } x \in A, \\
		&\mathbf{1}_{G} &&\text{ if } x \in U\setminus \operatorname{int}\left( A\right),
		\end{aligned}\right. 
		\end{align*} 
		is continuous and satisfies all our requirements. By the smoothness of $\psi$ and the exponential map, the Sobolev bounds follow from straight forward computation and obvious estimates. The only dependence of the constant $C_{1}$ on $d_{0}$ is via the $L^{\infty}$ norms of the derivatives of $\psi$ and hence is nonincreasing.  
	\end{proof} 
As a consequence, we deduce 
\begin{lemma}[Extension]\label{contextension}
	Let $d_{0}>0$ be a real number and $kp > n.$ Let $U, V, W \subset \mathbb{R}^{n}$ be convex open sets such that $W \subset \subset V \subset \subset U,$ $\operatorname{dist}\left( W; \partial V \right), \operatorname{dist}\left( V; \partial U \right)> d_{0}$ and  $U$ is bounded. Then there exists a constant $\delta_{G} = \delta_{G}(n, G) > 0$ such that for any two maps $f \in W^{k,p}\left(U;G\right)$ and $g \in C^{\infty}\left(V; G\right)$ satisfying the bound $$ \left\lVert f^{-1}g - \mathbf{1}_{G}\right\rVert_{W^{k,p}\left( V; G\right)} \leq \delta_{G},$$ we can find a map $\tilde{f} \in W^{k,p}\left(U;G\right)$ such that $\tilde{f}=g$ in a neighborhood of $W$ and $\tilde{f}= f$ in a neighborhood of $U\setminus V$ in $U.$ Moreover, there exists a constant $C_{2} = C_{2}\left( d_{0}, n, k, p, G \right) \geq 1,$ nonincreasing with $d_{0},$ such that we have the estimate
	\begin{equation*}
	\left\lVert \operatorname{exp}^{-1} \left( f^{-1}\tilde{f}\right)  \right\rVert_{W^{k,p}\left( V; \mathfrak{g}\right)} \leq C_{2}\left\lVert \operatorname{exp}^{-1} \left( f^{-1}g\right) \right\rVert_{W^{k,p}\left( V; \mathfrak{g}\right)}.
	\end{equation*}
\end{lemma}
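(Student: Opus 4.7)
The plan is to reduce Lemma~\ref{contextension} to the patching result Lemma~\ref{patching} applied to the ``difference'' map $F := f^{-1}g$ on $V$, and then recover $\tilde f$ by multiplying $f$ back in. The point is that the assumption $\|f^{-1}g - \mathbf{1}_G\|_{W^{k,p}(V)} \le \delta_G$ is tailored so that $F$ lies in the neighborhood $\mathcal{O}_G$ of $\mathbf{1}_G$ where the exponential is invertible, which is exactly the setting in which Lemma~\ref{patching} can be applied.

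First, I would use that $kp > n$ so that $W^{k,p}(V) \hookrightarrow C^0(\overline V)$ continuously, with some embedding constant $C_{\rm emb} = C_{\rm emb}(n,k,p,V)$. Choosing $\delta_G = \delta_G(n,k,p,G) > 0$ so that $C_{\rm emb}\,\delta_G$ is smaller than the $C^0$-radius of $\mathcal{O}_G$ around $\mathbf{1}_G$ (which depends only on $G$), the hypothesis forces $F(x) \in \mathcal{O}_G$ for every $x \in V$. Since $g$ is smooth and $f \in W^{k,p} \hookrightarrow C^0$, $F$ is continuous, and $\exp^{-1}(F)$ is a well-defined element of $W^{k,p}(V;\mathfrak g)$ by the usual composition rules for Sobolev maps in the subcritical regime, with $\|\exp^{-1}(F)\|_{W^{k,p}(V;\mathfrak g)}$ controlled by $\|F - \mathbf{1}_G\|_{W^{k,p}(V;G)}$.

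Second, I would apply Lemma~\ref{patching} with the ambient open set $V$ and closed convex sets $B := \overline W$ and $A := \overline{V'}$, where $V' \subset\subset V$ is an auxiliary convex open set chosen so that $W \subset\subset V'$ and $\operatorname{dist}(W;\partial V'), \operatorname{dist}(V';\partial V) > d_0/2$. Such a $V'$ exists because of the standing distance hypothesis. This produces a continuous map $\tilde F \in W^{k,p}(V;\mathcal{O}_G)$ with $\tilde F \equiv F$ on a neighborhood of $W$, $\tilde F \equiv \mathbf{1}_G$ on a neighborhood of $V \setminus V'$, and with the Sobolev bound
\begin{equation*}
\|\exp^{-1}(\tilde F)\|_{W^{k,p}(V;\mathfrak g)} \;\le\; C_1 \,\|\exp^{-1}(F)\|_{W^{k,p}(V;\mathfrak g)},
\end{equation*}
where $C_1$ is nonincreasing in $d_0$. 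Extending $\tilde F$ by $\mathbf{1}_G$ to all of $U$, which is continuous and $W^{k,p}$-compatible because $\tilde F \equiv \mathbf{1}_G$ near $\partial V$, produces $\tilde F \in W^{k,p}(U;G)$.

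Third, I set $\tilde f := f \cdot \tilde F$. Since $kp > n$, the space $W^{k,p}(U;\mathbb R^{N_0})$ is a Sobolev algebra (Gagliardo--Nirenberg together with the $L^\infty$ bound supplied by compactness of $G$), so $\tilde f \in W^{k,p}(U;G)$. On a neighborhood of $W$, $\tilde f = f \cdot f^{-1}g = g$; on a neighborhood of $U \setminus V$, $\tilde f = f \cdot \mathbf{1}_G = f$. The final estimate is automatic: $f^{-1}\tilde f = \tilde F$, so the bound from Lemma~\ref{patching} gives exactly the claim with $C_2 := C_1$. The only real point of care is the first step, ensuring $F$ lands pointwise in $\mathcal{O}_G$ so that $\exp^{-1}(F)$ is defined; once that is secured, all remaining steps are routine since the hard combinatorial work was already done in Lemma~\ref{patching}.
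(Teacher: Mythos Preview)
Your proposal is correct and follows essentially the same route as the paper: set $F=f^{-1}g$, use the $W^{k,p}\hookrightarrow C^0$ embedding to force $F$ into $\mathcal{O}_G$, apply Lemma~\ref{patching}, and define $\tilde f:=f\cdot\tilde F$. The only cosmetic difference is that the paper applies Lemma~\ref{patching} directly with ambient set $U$, $A=\overline{V}$, $B=\overline{W}$ (so the extension of $\tilde F$ to $U$ is already built in), whereas you work inside $V$ with an auxiliary $V'$ and then extend $\tilde F$ by $\mathbf{1}_G$ to $U$ by hand; either choice yields the same conclusion and estimate.
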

\begin{proof}
	We use the previous lemma by choosing $B= \overline{W},$ $A= \overline{V}$ and $F= f^{-1}g.$ Clearly, there is a smallness parameter $\delta$ as claimed such that the bound forces $f^{-1}g$ to take values in a neighborhood $\mathcal{O}_{G} \subset G$ which satisfies the assumptions of Lemma \ref{patching}. Now we set $$\tilde{f}(x) := f(x)\tilde{F}(x) \qquad \text{ for all } x \in U,$$ where $\tilde{F}$ is the extension of $F=f^{-1}g$ given by Lemma \ref{patching}. The estimate follows by simple computations and obvious estimates.
\end{proof} 
\begin{lemma}[Relative smooth approximation]\label{smoothinglemma}
	Let $kp > n.$ Let $U, V, W \subset \mathbb{R}^{n}$ be convex open sets such that $W \subset \subset V \subset \subset U$ and $U$ is bounded. Then there exists a constant $\delta_{G} = \delta_{G} (n, G ) > 0$  such that if $f \in W^{k,p} \left( U; G\right) $ satisfies $$ \operatorname{osc}\left( f; V \right) \leq \delta_{G},$$ then for every $\varepsilon > 0,$ there exists a smooth map $f^{\varepsilon} \in  C^{\infty} \left( W ; G\right) $ such that $$ \left\lVert f^{-1}f^{\varepsilon} - \mathbf{1}_{G}\right\rVert_{W^{k,p}\left( W; G\right)} \leq \varepsilon.$$ Moreover, if $A \subset U$ is a closed subset ( possibly empty ) such that $f$ is smooth in a neighborhood of $A \cap \overline{W}$, then $f^{\varepsilon}$ can be chosen to ensure $f^{\varepsilon} = f$ in $A \cap \overline{W}.$
	
\end{lemma}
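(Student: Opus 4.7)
The plan is to lift the problem to the Lie algebra via the exponential map, where ordinary linear mollification is available. First I would fix a point $x_0 \in \overline{W}$ (using that $kp > n$ makes $f$ continuous on $U$ by Sobolev embedding) and choose $\delta_{G}$ small enough that the oscillation bound $\operatorname{osc}(f; V) \leq \delta_{G}$ forces $g := f(x_0)^{-1} f$ to take values in the distinguished neighborhood $\mathcal{O}_{G}$ of $\mathbf{1}_{G}$ on which $\operatorname{exp}^{-1}$ is a smooth diffeomorphism onto a convex set in $\mathfrak{g}$. Then $u := \operatorname{exp}^{-1}(g) \in W^{k,p}(V; \mathfrak{g})$ by the supercritical composition property, and moreover $u$ is smooth on any open set where $f$ is smooth.

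For the unconstrained case: pick $W_1$ with $W \subset\subset W_1 \subset\subset V$ and let $u_{\varepsilon'} := u \ast \rho_{\varepsilon'}$ be a standard mollification; for $\varepsilon'$ small this is smooth on $W$ and $u_{\varepsilon'} \to u$ in $W^{k,p}(W; \mathfrak{g})$. Define $f^{\varepsilon} := f(x_0)\, \operatorname{exp}(u_{\varepsilon'})$, which is smooth and $G$-valued on $W$. Writing $\Psi(v, h) := \operatorname{exp}(-v)\operatorname{exp}(v+h) - \mathbf{1}_{G}$, which is a smooth function of $(v, h)$ satisfying $\Psi(v, 0) \equiv 0$, one has $f^{-1} f^{\varepsilon} - \mathbf{1}_{G} = \Psi(u, u_{\varepsilon'} - u)$ on $W$, and the supercritical composition estimate yields $\|f^{-1} f^{\varepsilon} - \mathbf{1}_{G}\|_{W^{k,p}(W)} \leq C(\|u\|_{W^{k,p}}, G)\, \|u_{\varepsilon'} - u\|_{W^{k,p}(W)}$, which is $\leq \varepsilon$ for $\varepsilon'$ small.

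For the relative variant, choose $\eta \in C_c^\infty$ identically equal to $1$ on a neighborhood of $A \cap \overline{W}$ in $W$ with $\operatorname{supp}(\eta)$ contained in the open set on which $f$ (hence $u$) is smooth. Replace $u_{\varepsilon'}$ by $u^{\varepsilon} := \eta u + (1 - \eta) u_{\varepsilon'}$ and set $f^{\varepsilon} := f(x_0)\, \operatorname{exp}(u^{\varepsilon})$. Then $u^{\varepsilon}$ is smooth on $W$: on $\operatorname{supp}(\eta)$ both $u$ and $u_{\varepsilon'}$ are smooth, and outside it $u^{\varepsilon} = u_{\varepsilon'}$ is smooth globally. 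On $\{\eta = 1\} \supset A \cap \overline{W}$ we have $u^{\varepsilon} = u$, so $f^{\varepsilon} = f$ there. The product-rule bound $\|u^{\varepsilon} - u\|_{W^{k,p}(W)} \leq C_{\eta}\, \|u_{\varepsilon'} - u\|_{W^{k,p}(W)}$ then delivers the required closeness exactly as in the unconstrained case.

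The main analytical obstacle is the nonlinear composition estimate $\|\Psi(u, h)\|_{W^{k,p}} \lesssim \|h\|_{W^{k,p}}$ for smooth $\Psi$ vanishing to first order in $h$. This is precisely where the supercriticality $kp > n$ is essential: it makes $W^{k,p}$ a Banach algebra, so the chain-rule expansion of $D^{\alpha}\bigl[\Psi \circ (u, h)\bigr]$ can be estimated term by term as products of $W^{k,p}$ seminorms, with an overall factor of $\|h\|_{W^{k,p}}$ inherited from $\Psi(v, 0) \equiv 0$. The geometric content of the lemma (choice of lift via $\operatorname{exp}^{-1}$, mollification, and cutoff interpolation) is elementary once this Nemitski-type input is granted.
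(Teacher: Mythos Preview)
Your proof is correct and follows essentially the same approach as the paper: both multiply by a constant group element (you use $f(x_0)$, the paper uses some $\bar{f}\in G$) so that the map lands in $\mathcal{O}_{G}$, pass to the Lie algebra via $\operatorname{exp}^{-1}$, approximate the resulting $\mathfrak{g}$-valued map, and push back through $\operatorname{exp}$. The only difference is presentational: the paper outsources the vector-valued relative approximation step to a classical result (Theorem~2.5 in Hirsch), whereas you spell it out explicitly via mollification and a cutoff $\eta$ supported where $u$ is already smooth, and you are more explicit about the composition estimate $\|\Psi(u,h)\|_{W^{k,p}}\lesssim \|h\|_{W^{k,p}}$ using that $W^{k,p}$ is an algebra for $kp>n$.
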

\begin{proof}
	If $f$ takes values in a finite dimensional vector space, the lemma is completely classical ( see e.g. Theorem 2.5 in \cite{Hirsch_differentialtopology} ) without requiring any smallness condition on the oscillations of $f.$ So we just need to choose $\delta_{G}$ small enough such that $\left( \bar{f}\right)^{-1}f \subset \mathcal{O}_{G},$ for some constant element $\bar{f}\in G ,$ where $\mathcal{O}_{G} \subset G$ is as in Lemma \ref{patching}. Then we can write $$f(x) = \bar{f}\operatorname{exp} u(x) \qquad \text{ for all }  x \in V$$ for some continuous map $u \in W^{k,p}\left( V ; \mathfrak{g}\right) .$ Note that since $f$ is smooth in a neighborhood of $A \cap \overline{W},$ which we can assume to be contained inside $V$ and we have $$ u(x) = \operatorname{exp}^{-1}\left[ \left(\bar{f}\right)^{-1}f(x) \right] \qquad \text{ for all }  x \in V,$$ we infer that $u$ is smooth in a neighborhood of $A \cap \overline{W}.$ Since $\mathfrak{g}$ is a finite dimensional vector space, we can find a sequence $\left\lbrace u^{\varepsilon} \right\rbrace \subset C^{\infty}\left( \overline{W} ; \mathfrak{g}\right) $ such that $$u^{\varepsilon} \stackrel{W^{k,p}}{\rightarrow} u \quad \text{ in } \overline{W} \text{ as } \varepsilon \rightarrow 0 \qquad \text{ and } \qquad  u^{\varepsilon}\big|_{A \cap \overline{W}} = u\big|_{A \cap \overline{W}}.$$ Choosing $\varepsilon > 0$ small enough and reducing $\delta$ if necessary, we can ensure that the image of $u^{\varepsilon}$ is contained in $\operatorname{exp}^{-1}\left( \mathcal{O}_{G}\right)$ as well. Now $f^{\varepsilon}(x) := \bar{f}\operatorname{exp} \left[u^{\varepsilon}(x)\right]$ is the desired map, which satisfies the estimate if we choose $\varepsilon$ suitably small.   
\end{proof}   
\section{Smooth Approximation in subcritical regime}\label{proof of subcritical approx}
Now we prove the smooth approximation theorem for $W^{k,p}$ bundles for $kp > n.$ 

\begin{proof}\textbf{( of Theorem \ref{smoothingC0bundle} )}	We prove only the case $k=1.$ The case $k = 2$ is similar. Also, we only show the existence of an approximating smooth cocycle $h_{ij}.$ The existence of the maps $\sigma_{i}$ follows, as already proved for $k=2$ by Uhlenbeck in \cite{UhlenbeckGaugefixing}, Corollary 3.3, but the argument works for $k=1$ as well.\smallskip  
	
	\noindent During the course of this proof, we will freely reduce $\varepsilon > 0$ finitely many times in order to make it suitably small. Also, we set $\delta_{G}> 0$ to be the smaller of the two smallness parameters $\delta_{G}$ given by Lemma \ref{contextension} and Lemma \ref{smoothinglemma}. All opens sets we are going to chose below are always assumed to be at least Lipschitz, convex and bounded without further comment. Now the rest of the proof involves two nested induction arguments.\smallskip 
	
	\noindent \textbf{Step 1: Outer induction::} We begin by choosing $N^{2}$ open sets $ \left\lbrace V^{r}_{i} \right\rbrace_{ 1 \leq  i,r \leq N}$ such that we have $M^{n}= \bigcup\limits_{i=1}^{N}V^{N}_{i}$ and for each $ 1 \leq i\leq N$ we have the inclusions $$ V^{N}_{i}\subset \subset V^{N-1}_{i} \subset \subset \ldots \subset \subset V_{i}^{r+1} \subset \subset V^{r}_{i}\subset \subset \ldots \subset \subset V^{1}_{i} \subset \subset V_{i} \subset \subset U_{\phi(i)},$$ together with the smallness conditions \begin{equation}\label{small_osc}
	\operatorname{osc}\left( g_{ij}; \overline{V_{i}}\cap \overline{V_{j}}\right) \leq \frac{\delta_{G}}{2} .
	\end{equation} Here and henceforth $g_{ij}$ stands for $g_{\phi(i)\phi(j)},$ where $\phi$ is the refinement map. Now, once we have chosen our sets $ \left\lbrace V^{r}_{i} \right\rbrace_{ 1 \leq  i,r \leq N},$ there exists a number $d_{0}>0$ such that we have $$ \operatorname{dist}\left( V^{r+1}_{i}; \partial V^{r}_{i} \right) \geq \left( N+1\right)^{4}d_{0} \qquad \text{ for all } 1 \leq r \leq N.$$ Let $C_{2}(d_{0}) \geq 1$ be the constant given by Lemma \ref{contextension} for this $d_{0}>0.$ Now we set $$ C_{0}:= 100^{nN}\left[ C_{\operatorname{exp}}C_{G}C_{2}(d)\right]^{4},$$ 
	where $C_{G} \geq 1$ is an $L^{\infty}$ bound for $G$ and $C_{\operatorname{exp}} \geq 1$ is a $C^{2}$ bound for the smooth maps $\operatorname{exp}$ and $\operatorname{exp}^{-1}$ for $G.$ \smallskip 
	
	\noindent \textbf{Step 1a: Hypotheses for outer induction:: } For each $1 \leq r \leq N,$ we want to inductively construct a collection of smooth maps $\left\lbrace g^{r}_{ij}\right\rbrace_{1 \leq i,j\leq r} $ such that $g^{r}_{ij}: V^{r}_{i}\cap V^{r}_{j} \rightarrow G$ is smooth, $g^{ii}_{r}=\mathbf{1}_{G}$ for all $1 \leq i \leq r$  and satisfies
	\begin{itemize}
		\item[(H1)]  the cocycle condition \begin{equation}\label{cocycle g}
		g^{r}_{ij}g^{r}_{jk} = g^{r}_{ik} \quad \text{ for all } x \in V^{r}_{i}\cap V^{r}_{j}\cap V^{r}_{k},  \text{ for all } 1 \leq i,j,k \leq r, \tag{$\mathbf{H}^{1}_{r}$}
		\end{equation}
		\item[(H2)] and the estimate 
		\begin{equation}\label{estimate g}
		\left\lVert g^{r}_{ij} - g_{ij}\right\rVert_{W^{k,p}\cap C^{0}\left( V^{r}_{i}\cap V^{r}_{j};G\right)}  \leq \left(2C_{0}\right)^{-\frac{N^{2}}{r^{2}}}\varepsilon \tag{$\mathbf{H}^{2}_{r}$}.
		\end{equation}
	\end{itemize}
	Setting $g^{1}_{11}= \mathbf{1}_{G}$ in $V_{1}^{1},$ the hypotheses are met for $r=1.$ So  we assume that we have already constructed such a family for all  $1\leq r \leq r_{0}$ for some $1 \leq r_{0} \leq N-1$ and show that we can construct such a family for $r = r_{0}+1.$
	\smallskip 
	
	\noindent \textbf{Step 1b: Induction step for outer induction:: }
	Given such a family of smooth maps	$\left\lbrace g^{r_{0}}_{ij}\right\rbrace_{1 \leq i,j\leq r_{0}} $ where $g^{r_{0}}_{ij}: V^{r_{0}}_{i}\cap V^{r_{0}}_{j} \rightarrow G$ for all $1\leq i, j\leq r_{0}$ that satisfies \eqref{cocycle g} and \eqref{estimate g} for $r=r_{0},$ we can define the maps $g^{r_{0}+1}_{ij}: V^{r_{0}+1}_{i}\cap V^{r_{0}+1}_{j} \rightarrow G$ as the restriction $$ g^{r_{0}+1}_{ij}:= g^{r_{0}}_{ij}\Big|_{V^{r_{0}+1}_{i}\cap V^{r_{0}+1}_{j}}.$$ Also, we obviously set $g^{r_{0}+1}_{(r_{0}+1)(r_{0}+1)} = \mathbf{1}_{G}$ in 
	$V^{r_{0}+1}_{r_{0}+1}.$ Thus, it only remains to construct smooth maps $g^{r_{0}+1}_{i(r_{0}+1)}: V^{r_{0}+1}_{i}\cap V^{r_{0}+1}_{r_{0}+1} \rightarrow G$ for the values $1\leq i \leq r_{0},$ such that they satisfy, for all $1 \leq i < j \leq r_{0}, $ the identity 
	\begin{equation}\label{cocycle r0+1}
	g^{r_{0}+1}_{j(r_{0}+1)}(x) = \left[ g^{r_{0}+1}_{ij}(x)\right]^{-1}g^{r_{0}+1}_{i(r_{0}+1)}(x), 
	\end{equation}  for all $ x \in V^{r_{0}+1}_{i}\cap V^{r_{0}+1}_{j}\cap V^{r_{0}+1}_{r_{0}+1},$ along with the estimates \begin{equation}
	\left\lVert g^{r_{0}+1}_{i(r_{0}+1)} - g_{i(r_{0}+1)}\right\rVert_{W^{k,p}\cap C^{0}\left( V^{r_{0}+1}_{i}\cap V^{r_{0}+1}_{r_{0}+1};G\right)}  \leq \left(2C_{0}\right)^{-\frac{N^{2}}{(r_{0}+1)^{2}}}\varepsilon ,   
	\end{equation}
	for all $ 1\leq i \leq r_{0}.$ This will be done by another induction.\smallskip 
	
	\noindent \textbf{Step 2: Inner induction:: } To do this, we first chose $\left(r_{0}+1\right)^{2}$ open sets $\left\lbrace W^{l}_{i}\right\rbrace_{1 \leq i,l \leq r_{0}+1}$ such that for each $ 1 \leq i\leq r_{0}+1$ we have the inclusions $$ V^{r_{0}+1}_{i}\subset \subset W^{r_{0}+1}_{i} \subset \subset \ldots \subset \subset W_{i}^{l+1} \subset \subset W^{l}_{i}\subset \subset \ldots \subset \subset W^{1}_{i} \subset \subset V^{r_{0}}_{i}$$ and for all $ 1\leq i \leq r_{0}+1, 1 \leq l \leq r_{0} ,$ we have, $$\operatorname{dist}\left( W^{l+1}_{i};  \partial W^{l}_{i} \right), \operatorname{dist}\left( V^{r_{0}+1}_{i}, \partial W^{r_{0}+1}_{i}\right), \operatorname{dist}\left( W^{1}_{i}, \partial V^{r_{0}}_{i}\right)  \geq d_{0} .$$  \smallskip 
	
	\noindent \textbf{Step 2a: Hypotheses for inner induction::} Let $1 \leq l_{0} \leq r_{0}-1$ and suppose we have constructed maps $h^{l_{0}}_{i(r_{0}+1)} \in C^{\infty}\left( W^{l_{0}}_{i}\cap W^{l_{0}}_{r_{0}+1}; G\right)$  for all $1 \leq i \leq l_{0}$ satisfying 
	\begin{equation}\label{cocycle_h}
	h^{l_{0}}_{i(r_{0}+1)}\left( h^{l_{0}}_{j(r_{0}+1)}\right)^{-1} = g^{r_{0}}_{ij}\qquad \text{ in } 
	W^{l_{0}}_{i}\cap W^{l_{0}}_{j}\cap W^{l_{0}}_{r_{0}+1}, 
	\end{equation}
	for all $1 \leq i,j \leq l_{0},$ together with the estimates 
	\begin{equation}\label{estimate_h}
	\left\lVert h^{l_{0}}_{i(r_{0}+1)} - g_{i(r_{0}+1)}\right\rVert_{W^{k,p}\cap C^{0}\left( W^{l_{0}}_{i}\cap W^{l_{0}}_{r_{0}+1};G\right)}  \leq \left(2C_{0}\right)^{-\frac{N^{2}}{l_{0}(r_{0}+1)}}\varepsilon  \text{ for all } 1\leq i \leq l_{0}.   
	\end{equation}
	Now we construct maps $h^{l_{0}+1}_{i(r_{0}+1)}: W^{l_{0}+1}_{i}\cap W^{l_{0}+1}_{r_{0}+1} \rightarrow G $  for all $1 \leq i \leq l_{0}+1$ satisfying 
	\begin{equation}\label{cocycle_h+1}
	h^{l_{0}+1}_{i(r_{0}+1)}\left( h^{l_{0}+1}_{j(r_{0}+1)}\right)^{-1} = g^{r_{0}}_{ij}\qquad \text{ in } 
	W^{l_{0}+1}_{i}\cap W^{l_{0}+1}_{j}\cap W^{l_{0}+1}_{r_{0}+1}, 
	\end{equation}
	for all $1 \leq i,j \leq l_{0}+1,$ together with the estimates 
	\begin{equation}\label{estimate_h+1}
	\left\lVert h^{l_{0}+1}_{i(r_{0}+1)} - g_{i(r_{0}+1)}\right\rVert_{W^{k,p}\cap C^{0}\left( W^{l_{0}+1}_{i}\cap W^{l_{0}+1}_{r_{0}+1};G\right)}  \leq \left(2C_{0}\right)^{-\frac{N^{2}}{(l_{0}+1)(r_{0}+1)}}\varepsilon  ,   
	\end{equation}
	for all $ 1\leq i \leq l_{0}+1.$ Note that by virtue of \eqref{small_osc}, we can use Lemma \ref{smoothinglemma} with $A = \emptyset$ to construct a smooth map $h^{1}_{1(r_{0}+1)} \in C^{\infty}\left( W^{1}_{i}\cap W^{1}_{r_{0}+1}; G \right)$ such that we have 
	$$\left\lVert h^{1}_{1(r_{0}+1)} - g_{1(r_{0}+1)}\right\rVert_{W^{k,p}\cap C^{0}\left( W^{1}_{i}\cap W^{1}_{r_{0}+1};G\right)}  \leq \left(2C_{0}\right)^{-\frac{N^{2}}{(r_{0}+1)}}\varepsilon .  $$ 
	This implies that \eqref{cocycle_h} and \eqref{estimate_h} are satisfied for $l_{0}=1$ and we can start the induction. \smallskip 
	
	\noindent \textbf{Step 2b: Induction step for inner induction:: } As before, by restricting already constructed maps, it only remains to construct \emph{one} smooth map $h^{l_{0}+1}_{(l_{0}+1)(r_{0}+1)} \in C^{\infty}\left( W^{l_{0}+1}_{l_{0}+1}\cap W^{l_{0}+1}_{r_{0}+1} ; G\right)$ such that 
	\begin{equation}\label{cocycle_l+1}
	h^{l_{0}+1}_{(l_{0}+1)(r_{0}+1)} = \left[ g^{r_{0}}_{i(l_{0}+1)}\right]^{-1}h^{l_{0}}_{i(r_{0}+1)}
	\qquad \text{ in }   W^{l_{0}+1}_{i}\cap W^{l_{0}+1}_{l_{0}+1}\cap W^{l_{0}+1}_{r_{0}+1}
	\end{equation}
	for all $1 \leq i \leq l_{0}$ and we have the estimate 
	\begin{equation}\label{estimate_l+1}
	\left\lVert h^{l_{0}+1}_{(l_{0}+1)(r_{0}+1)} - g_{(l_{0}+1)(r_{0}+1)}\right\rVert_{W^{k,p}\cap C^{0}\left( W^{l_{0}+1}_{l_{0}+1}\cap W^{l_{0}+1}_{r_{0}+1};G\right)}  \leq \left(2C_{0}\right)^{-\frac{N^{2}}{(l_{0}+1)(r_{0}+1)}}\varepsilon   .   
	\end{equation}
	 We define $\hat{h}_{(l_{0}+1)(r_{0}+1)}: \bigcup\limits_{1 \leq i \leq l_{0}} \left( W^{l_{0}}_{i}\cap W_{l_{0}+1}^{l_{0}}\cap W^{l_{0}}_{r_{0}+1} \right) \rightarrow G$ by setting 
	$$\hat{h}_{(l_{0}+1)(r_{0}+1)}:=  \left[ g^{r_{0}}_{i(l_{0}+1)}\right]^{-1}h^{l_{0}}_{i(r_{0}+1)} \qquad \text{ in } W^{l_{0}}_{i}\cap W^{l_{0}+1}_{l_{0}}\cap W^{l_{0}}_{r_{0}+1}$$ for all $1 \leq i \leq l_{0}.$ Note that $g^{r_{0}}_{i(l_{0}+1)}$ and $h^{l_{0}}_{i(r_{0}+1)}$ are already defined  in $V_{i}^{r_{0}}\cap V_{l_{0}+1}^{r_{0}}$ and $W^{l_{0}}_{i}\cap W^{l_{0}}_{r_{0}+1}$ respectively, by the induction hypotheses.
	By \eqref{cocycle g} for $r=r_{0}$ and \eqref{cocycle_h}, the definitions agree in quadruple intersections and thus
	$\hat{h}_{(l_{0}+1)(r_{0}+1)}: \bigcup\limits_{1 \leq i \leq l_{0}} \left( W^{l_{0}}_{i}\cap W_{l_{0}+1}^{l_{0}}\cap W^{l_{0}}_{r_{0}+1} \right) \rightarrow G $ is actually smooth. 
	By using \eqref{estimate_h}, \eqref{estimate g} for $r=r_{0}$ and noting that $l_{0}+1 \leq r_{0},$  one can estimate the norm $$\left\lVert  g_{(l_{0}+1)(r_{0}+1)}^{-1}\hat{h}_{(l_{0}+1)(r_{0}+1)} - \mathbf{1}_{G}\right\rVert_{W^{k,p}\left(\bigcup\limits_{1 \leq i \leq l_{0}} \left( W^{l_{0}}_{i}\cap W_{l_{0}+1}^{l_{0}}\cap W^{l_{0}}_{r_{0}+1} \right) ; G \right)} .$$
	Now, choosing $\varepsilon > 0$ suitably small, this norm can be made smaller than $\delta_{G}.$ Pick an open set $X$ satisfying  $$ \bigcup\limits_{1 \leq i \leq l_{0}} \left( W^{l_{0}+1}_{i}\cap W_{l_{0}+1}^{l_{0}+1}\cap W^{l_{0}+1}_{r_{0}+1} \right) \subset \subset X \subset \subset\bigcup\limits_{1 \leq i \leq l_{0}} \left( W^{l_{0}}_{i}\cap W_{l_{0}+1}^{l_{0}}\cap W^{l_{0}}_{r_{0}+1} \right).$$ 
	By Lemma \ref{contextension}, we find a map  $\tilde{h}_{(l_{0}+1)(r_{0}+1)} \in W^{k,p}\left( V^{r_{0}}_{l_{0}+1}\cap V^{r_{0}}_{r_{0}+1}; G\right) $
	such that $ \tilde{h}_{(l_{0}+1)(r_{0}+1)} = \hat{h}_{(l_{0}+1)(r_{0}+1)} $
	in a neighborhood of $X$ and we have
	\begin{multline*}
	\left\lVert \operatorname{exp}^{-1} \left(  g_{(l_{0}+1)(r_{0}+1)}^{-1}\tilde{h}_{(l_{0}+1)(r_{0}+1)}\right)  \right\rVert_{W^{k,p}\left( V^{r_{0}}_{l_{0}+1}\cap V^{r_{0}}_{r_{0}+1}; \mathfrak{g}\right)}  \\ \lesssim \left\lVert \operatorname{exp}^{-1} \left(  g_{(l_{0}+1)(r_{0}+1)}^{-1}\hat{h}_{(l_{0}+1)(r_{0}+1)}\right) \right\rVert_{W^{k,p}\left( \bigcup\limits_{1 \leq i \leq l_{0}} \left( W^{l_{0}}_{i}\cap W_{l_{0}+1}^{l_{0}}\cap W^{l_{0}}_{r_{0}+1} \right); \mathfrak{g}\right)}. 
	\end{multline*}  
	Combining this estimate with \eqref{small_osc} and choosing $\varepsilon $ smaller if necessary, we can force that $ \operatorname{osc}\left(\tilde{h}_{(l_{0}+1)(r_{0}+1)}; W_{l_{0}+1}^{l_{0}}\cap W^{l_{0}}_{r_{0}+1} \right)\leq \delta_{G}.$ Hence by Lemma \ref{smoothinglemma}, there exists $h^{l_{0}+1}_{(l_{0}+1)(r_{0}+1)} \in C^{\infty}\left( W^{l_{0}+1}_{l_{0}+1}\cap W^{l_{0}+1}_{r_{0}+1} ; G\right)$ satisfying \eqref{cocycle_l+1} and \eqref{estimate_l+1}. \end{proof}

\end{appendices}
\section*{Acknowledgments}
The author warmly thanks Tristan Rivi\`{e}re for introducing him to this subject and numerous discussions, suggestions and encouragement.

\end{document}